\theoremstyle{plain}
\newtheorem{thm}[subsection]{Theorem}
\newtheorem{prop}[subsection]{Proposition}
\theoremstyle{definition}
\newtheorem{defn}[subsection]{Definition}
\theoremstyle{remark}
\newtheorem{rem}[subsection]{Remark}
\newcommand{\DD}{{ \mathsf{D} }}
\newcommand{\ZZ}{{ \mathbb{Z} }}
\newcommand{\NN}{{ \mathbb{N} }}
\newcommand{\capP}{{ \mathcal{P} }}
\newcommand{\capX}{{ \mathcal{X} }}
\newcommand{\capY}{{ \mathcal{Y} }}
\newcommand{\powerset} {{ \mathcal{P} }}
\newcommand{\Sk}{{ \mathrm{sk} }}
\newcommand{\ev}{{ \mathrm{ev} }}
\newcommand{\id}{{ \mathrm{id} }}
\newcommand{\Space}{{ \mathsf{S} }}
\newcommand{\Ho}{{ \mathsf{Ho} }}
\newcommand{\sSet}{{ \mathsf{sSet} }}
\newcommand{\Chaincx}{{ \mathsf{Ch} }}
\newcommand{\sAb}{{ \mathsf{sAb} }}
\newcommand{\M}{{ \mathsf{M} }}
\newcommand{\Set}{{ \mathsf{Set} }}
\newcommand{\RR}{{ \mathsf{R} }}
\newcommand{\K}{{ \mathsf{K} }}
\newcommand{\coAlg}{{ \mathsf{coAlg} }}
\newcommand{\res}{{ \mathsf{res} }}
\newcommand{\BK}{{ \mathsf{BK} }}
\newcommand{\CGHaus}{{ \mathsf{CGHaus} }}
\newcommand{\coAlgK}{{ \coAlg_\K }}
\newcommand{\Smash}{{ \,\wedge\, }}
\newcommand{\tensor}{{ \otimes }}
\newcommand{\tensordot}{{ \dot{\tensor} }}
\newcommand{\wequiv}{{ \ \simeq \ }}
\newcommand{\Iso}{{  \ \cong \ }}
\newcommand{\Equal}{{ \ = \ }}
\newcommand{\rarrow}{{ \rightarrow }}
\newcommand{\larrow}{{ \leftarrow }}
\newcommand{\function}[3]{{ {#1}\colon\thinspace{#2}\rarrow{#3} }}
\DeclareMathOperator*{\colim}{colim}
\DeclareMathOperator*{\holim}{holim}
\DeclareMathOperator{\Hombold}{\mathbf{Hom}}
\DeclareMathOperator{\hombold}{\mathbf{hom}}
\DeclareMathOperator{\Map}{Map}
\DeclareMathOperator{\Cobar}{Cobar}
\DeclareMathOperator{\Tot}{Tot}
\DeclareMathOperator{\hofib}{hofib}
\DeclareMathOperator{\iter}{iterated}
\DeclareMathOperator{\fiber}{fiber}
\begin{document}

\title[Integral chains]{Integral chains and Bousfield-Kan completion}

\author{Jacobson R. Blomquist}
\author{John E. Harper}

\address{Department of Mathematics, The Ohio State University, 231 West 18th Ave, Columbus, OH 43210, USA}
\email{blomquist.9@osu.edu}

\address{Department of Mathematics, The Ohio State University, Newark, 1179 University Dr, Newark, OH 43055, USA}
\email{harper.903@math.osu.edu}

\begin{abstract}
Working in the Arone-Ching framework for homotopical descent, it follows that the Bousfield-Kan completion map with respect to integral homology is the unit of a derived adjunction. We prove that this derived adjunction, comparing spaces with coalgebra complexes over the associated integral homology comonad, via integral chains, can be turned into a derived equivalence by replacing spaces with the full subcategory of simply connected spaces. In particular, this provides an integral chains characterization of the homotopy type of simply connected spaces.
\end{abstract}

\maketitle

\section{Introduction}

In this paper we revisit Bousfield-Kan \cite{Bousfield_Kan} completion of spaces with respect to integral homology. Our aim is to clarify and conceptualize the important completion result proved in \cite{Bousfield_Kan} that homology completion for simply connected spaces recovers the original space (up to homotopy)---our main result, Theorem \ref{MainTheorem}, recasts the Bousfield-Kan completion theorem in terms of coalgebraic structures encoding simply connected homotopy types.

Bousfield-Kan completion is constructed by gluing together (via a homotopy limit) a cosimplicial resolution that is nothing more than iterations of homology. The reason these homology invariants are useful is that they throw away information about the space, thus making computations of these invariants ``easier'', at the cost of losing information. The classical completion result of Bousfield-Kan indicates that if fundamental group information ``takes care of itself'', then no information is really lost, it somehow gets preserved or encoded without loss in the coface maps and codegeneracy maps of the resolution. In other words, in this paper we ask the deeper question: What is the completion result of Bousfield-Kan really telling us? Unwinding what the cosimplicial identities mean reveals that they are encoding nothing other than the fact that homology is equipped with a coaction of the homology comonad up to all higher coherences: i.e., 3-fold coassociativity, 4-fold coassociativity, 5-fold coassociativity, etc., mixed in with various co-unit maps and diagrams (see \cite[VII.2]{MacLane_categories} for a useful discussion of coherence diagrams). This suggests the following idea: Maybe the homotopy category of 1-connected simplicial abelian groups equipped with a coaction of the homology comonad captures the entire homotopy category of simply connected spaces---that is our main result. In more detail:

In this paper we investigate the homotopy category of 1-connected simplicial abelian groups equipped with the coalgebraic structure naturally arising on the space level homology $\tilde{\ZZ}X$ (Section \ref{sec:integral_chains}) of a pointed space $X$. This coalgebraic structure is encoded by the coaction of the associated homology comonad $\K=\tilde{\ZZ}U$ whose underlying functor on simplicial abelian groups assigns to $Y$ the reduced free abelian group complex generated by the underlying simplicial set $UY$; the comultiplication map $\K\rarrow\K\K$ is induced from the canonical unit map $\id\rightarrow U\tilde{\ZZ}$, which sends an element $x$ to the element underlying the (equivalence class of the) formal sum $1\cdot x$; i.e., comultiplication is the canonical map $\tilde{\ZZ}\id U\rarrow \tilde{\ZZ}U\tilde{\ZZ}U$. We show that this homotopy category of 1-connected coalgebras, built by appropriately fattening up ``all constructions in sight'' (to be homotopy meaningful) via the Arone-Ching enrichments \cite[Section 1]{Arone_Ching_classification}, is equivalent to the homotopy category of simply connected spaces.

Our hope is that the characterization of simply connected homotopy types described in this paper will provide a jumping off point for future work in using ``homology equipped with coalgebraic structure'' to distinguish homotopy types of simply connected
spaces. The informal slogan is: Naturally occurring coalgebraic
structures on integral chains, while less traditional to exploit, are
potentially more powerful and sensitive than, say, dualizing in order to work with algebraic objects. With an eye on future calculational outcomes of this work, the next step in this project is the construction of concrete chain-level descriptions of the associated homology comonad $\K=\tilde{\ZZ}U$ and its coalgebras; i.e., a derived-equivalent category of coalgebras on chain complexes. A natural candidate is provided via the normalization-denormalization comparison (in the Dold-Kan theorem) to get down to the level of chains (where homological algebra becomes available); working out the precise analysis and details is beyond the scope of our work here.

This paper is written simplicially so that ``space'' means ``simplicial set'' unless otherwise noted; see Bousfield-Kan \cite[VIII]{Bousfield_Kan} and Goerss-Jardine \cite[I]{Goerss_Jardine}.

\subsection{The space level Hurewicz map}

If $X$ is a pointed space, the usual Hurewicz map between homotopy groups and reduced homology groups has the form
\begin{align}
\label{eq:hurewicz_map_introduction}
  \pi_*(X)\rarrow \tilde{H}_*(X;\ZZ).
\end{align}
The starting point of the work in Bousfield-Kan \cite[I.2.3]{Bousfield_Kan} is essentially  the observation that this comparison map comes from a space level Hurewicz map of the form
\begin{align}
\label{eq:hurewicz_map_spaces_level_introduction}
  X\rightarrow \tilde{\ZZ}(X)
\end{align}
and that applying $\pi_*$ recovers the map \eqref{eq:hurewicz_map_introduction}.

\subsection{Iterating the Hurewicz map to build a resolution}

Once one has such a Hurewicz map on the level of spaces, it is natural to form a cosimplicial resolution of $X$ with respect to integral homology of the form
\begin{align}
\label{eq:homology_resolution_introduction}
\xymatrix@1{
  X\ar[r] &
  \tilde{\ZZ}(X)\ar@<-0.5ex>[r]\ar@<0.5ex>[r] &
  \tilde{\ZZ}^2(X)
  \ar@<-1.0ex>[r]\ar[r]\ar@<1.0ex>[r] &
  \tilde{\ZZ}^3(X)\cdots
  }
\end{align}
showing only the coface maps. The codegeneracy maps, not shown above, are induced by the counit $\K\rarrow\id$ of the associated integral homology comonad $\K$ that can be thought of as encoding the space level co-operations on the integral homology complexes; compare with Miller \cite[Section 1]{Miller} in the context of the Sullivan conjecture. 

In other words, by iterating the space level Hurewicz map \eqref{eq:hurewicz_map_spaces_level_introduction}, Bousfield-Kan \cite[I.4.1]{Bousfield_Kan} build a cosimplicial resolution of $X$ with respect to integral homology, and taking the homotopy limit of the resolution \eqref{eq:homology_resolution_introduction} produces the $\ZZ$-completion map
\begin{align}
\label{eq:comparison_map_Z_completion}
  X\rarrow X^\wedge_\ZZ
\end{align}
which can be interpreted in the case of simply connected or nilpotent spaces as the localization of $X$ with respect to integral homology; see  Bousfield \cite[Section 1]{Bousfield_localization_spaces} for this, together with the associated universal property characterization of localization behind the argument, and Bousfield-Kan \cite[V.4]{Bousfield_Kan}. This is the integral analog of the completions and localizations of spaces originally studied in Sullivan \cite{Sullivan_mit_notes, Sullivan_genetics}, and subsequently in Bousfield-Kan \cite{Bousfield_Kan} and Hilton-Mislin-Roitberg \cite{Hilton_Mislin_Roitberg}; there is an extensive literature---for a useful introduction, see also Bousfield \cite{Bousfield_localization_spaces}, Dwyer \cite{Dwyer_localizations}, and May-Ponto \cite{May_Ponto}; a transfinite version of ``iterating the Hurewicz map'' is studied in Dror-Dwyer \cite{Dror_Dwyer_long_homology}.

\subsection{The main result}

Working in the Arone-Ching \cite{Arone_Ching_classification} framework for homotopical descent, it follows that the Bousfield-Kan completion map with respect to integral homology is the unit of a derived adjunction \eqref{eq:derived_adjunction_main}. Our main result is the following theorem (Theorem \ref{MainTheorem}): that this derived adjunction can be turned into a derived equivalence by replacing spaces with the full subcategory of simply connected spaces. This is reminiscent of Quillen's \cite{Quillen_rational} rational chains equivalence in rational homotopy theory and, dually,  Sullivan's \cite{Sullivan_infinitesimal} rational cochains equivalence; compare Bousfield-Gugenheim \cite{Bousfield_Gugenheim} for an adjoint functor approach to Sullivan's theory using methods of Quillen's model categories. Since the foundational work of Quillen and Sullivan, the problem of establishing $p$-adic and fully integral versions of the rational chains equivalence theorem, or its dual, have been studied in the work of Dwyer-Hopkins \cite{Dwyer_Hopkins}, Goerss \cite{Goerss_simplicial_chains}, Karoubi \cite{Karoubi}, Kriz \cite{Kriz}, Mandell \cite{Mandell, Mandell_cochains_homotopy_type}, and Smirnov \cite{Smirnov}; we have been motivated and inspired by their work. 

The Arone-Ching framework \cite{Arone_Ching_classification} constructs a highly homotopy coherent topological enrichment \cite[1.14]{Arone_Ching_classification} for $\K$-coalgebras comprising the collections of maps called derived $\K$-coalgebra maps \cite[1.11]{Arone_Ching_classification}---the underlying maps are required to respect the $\K$-coalgebra structure (i.e., the $\K$-coaction), but only in a highly homotopy coherent manner. In this framework, $\Map_\coAlgK(Y,Y')$ denotes the space of all derived $\K$-coalgebra maps from $Y$ to $Y'$. This highly homotopy coherent topological enrichment on $\K$-coalgebras \cite[Section 1]{Arone_Ching_classification} (see Sections \ref{sec:homotopy_theory_K_coalgebras} and \ref{sec:derived_fundamental_adjunction} for a brief development in the context of this paper) provides a framework that allows one to analyze homotopical descent without being forced into a direct analysis of limits in the category of $\K$-coalgebras and strict $\K$-coalgebra maps: that is the main payoff of the Arone-Ching enrichments.

In terms of these enrichments, the upshot of our main result (Theorem \ref{MainTheorem}) is that the integral chains functor $\tilde{\ZZ}$ in \eqref{eq:derived_adjunction_main} is a Dywer-Kan equivalence between 1-connected spaces, equipped with their usual topological enrichment, and 1-connected $\K$-coalgebras, equipped with their highly homotopy coherent topological enrichment built in Arone-Ching \cite[Section 1]{Arone_Ching_classification}. In the statement of the following theorem, $C(Y)$ is the usual cosimplicial cobar construction (Definition \ref{defn:cobar_construction}) associated to the $\K$-coalgebra $Y$.

\begin{thm}
\label{MainTheorem}
The integral chains functor $\tilde{\ZZ}$ fits into a derived adjunction
\begin{align}
\label{eq:derived_adjunction_main}
  \Map_{\coAlgK}(\tilde{\ZZ}X,Y)\wequiv
  \Map_{\Space_*}(X,\holim\nolimits_\Delta C(Y))
\end{align}
comparing pointed spaces to coalgebra complexes over the associated integral homology comonad $\K=\tilde{\ZZ}U$, that is a Dwyer-Kan equivalence after restriction to the full subcategories of $1$-connected spaces and $1$-connected cofibrant $\K$-coalgebras, with respect to the enrichments discussed above (see Remark \ref{rem:elaboration_on_the_main_result})
\end{thm}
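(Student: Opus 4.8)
The plan is to establish the Dwyer-Kan equivalence by verifying separately that $\tilde{\ZZ}$ is homotopically fully faithful (a weak equivalence on every derived mapping space) and essentially surjective (every $1$-connected cofibrant $\K$-coalgebra is weakly equivalent to $\tilde{\ZZ}X$ for some $1$-connected $X$), matching these two conditions with the derived unit and the derived counit of the adjunction \eqref{eq:derived_adjunction_main}, respectively. First I would set up the derived adjunction itself in the Arone-Ching enriched setting (Sections \ref{sec:homotopy_theory_K_coalgebras} and \ref{sec:derived_fundamental_adjunction}), identifying the derived right adjoint to $\tilde{\ZZ}$ with $\holim\nolimits_\Delta C(-)$, where $C(Y)$ is the cosimplicial cobar construction $[n]\mapsto U\K^n Y$ of Definition \ref{defn:cobar_construction}.

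For homotopically fully faithful, I would first observe that on underlying spaces $C(\tilde{\ZZ}X)^n=U\K^n\tilde{\ZZ}X\iso(U\tilde{\ZZ})^{n+1}X$ is levelwise the Bousfield-Kan resolution \eqref{eq:homology_resolution_introduction}, so that the derived unit $X\rarrow\holim\nolimits_\Delta C(\tilde{\ZZ}X)$ is exactly the $\ZZ$-completion map \eqref{eq:comparison_map_Z_completion}. The Bousfield-Kan completion theorem then supplies that this map is a weak equivalence for $1$-connected $X$. Fully faithfulness follows: for $1$-connected $X,X'$ the triangle identities identify the composite $\Map_{\Space_*}(X,X')\xrightarrow{\tilde{\ZZ}}\Map_{\coAlgK}(\tilde{\ZZ}X,\tilde{\ZZ}X')\wequiv\Map_{\Space_*}(X,\holim\nolimits_\Delta C(\tilde{\ZZ}X'))$ with postcomposition by the unit for $X'$, which is a weak equivalence, so $\tilde{\ZZ}$ is a weak equivalence on derived mapping spaces.

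Essential surjectivity is the substantive direction. Given a $1$-connected cofibrant $\K$-coalgebra $Y$, I would set $X:=\holim\nolimits_\Delta C(Y)$ and aim to show both that $X$ is $1$-connected and that the derived counit $\tilde{\ZZ}X\rarrow Y$ is a weak equivalence. The starting point is that applying $\tilde{\ZZ}$ levelwise to the cobar construction recovers the cobar resolution of the comonad on the coalgebra, since $\tilde{\ZZ}C(Y)^n=\tilde{\ZZ}U\K^n Y=\K^{n+1}Y$; coaugmented by the coaction $Y\rarrow\K Y$ and equipped with the extra codegeneracy coming from the comonad counit, this cosimplicial object is cosplit, so the coaugmentation is a weak equivalence $Y\wequiv\holim\nolimits_\Delta\tilde{\ZZ}C(Y)$. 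Consequently the derived counit factors as the natural comparison map $\tilde{\ZZ}\holim\nolimits_\Delta C(Y)\rarrow\holim\nolimits_\Delta\tilde{\ZZ}C(Y)$ followed by the inverse of this equivalence, and it suffices to prove that the comparison map is a weak equivalence.

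The main obstacle is precisely this comparison map. Since $\tilde{\ZZ}$ is a left adjoint it need not commute with the homotopy limit $\holim\nolimits_\Delta$, so the map $\tilde{\ZZ}\holim\nolimits_\Delta C(Y)\rarrow\holim\nolimits_\Delta\tilde{\ZZ}C(Y)$ must be analyzed directly. I expect the argument to rest on a connectivity estimate for the cosimplicial cobar construction of a $1$-connected coalgebra: the cosimplicial level $C(Y)^n$ becomes increasingly connected as $n$ grows, so that through any fixed degree the associated $\Tot$-tower is eventually constant and its homotopy limit is computed by a finite stage, across which the left adjoint $\tilde{\ZZ}$ can be commuted past the limit using that it preserves connectivity and induces isomorphisms on homotopy in the range controlled by the Hurewicz theorem. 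The same connectivity estimate simultaneously shows that $X=\holim\nolimits_\Delta C(Y)$ is $1$-connected, completing essential surjectivity. This is the step at which the $1$-connectedness hypothesis is indispensable, mirroring the role of simple connectivity in the convergence of the Bousfield-Kan completion tower; without it the cobar tower need not converge and $\tilde{\ZZ}$ cannot be exchanged with $\holim\nolimits_\Delta$.
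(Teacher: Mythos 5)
Your overall architecture matches the paper's: the derived unit is identified with the Bousfield--Kan $\ZZ$-completion map and handled by \cite[III.5.4]{Bousfield_Kan}, and the derived counit is factored as $\tilde{\ZZ}\holim_\Delta C(Y)\rarrow\holim_\Delta\tilde{\ZZ}C(Y)\rarrow Y$, with the second map a weak equivalence because $\tilde{\ZZ}C(Y)\iso\Cobar(\K,\K,Y)$ is cosplit by the extra codegeneracy. The gap is in the step you yourself flag as the main obstacle. First, your stated mechanism for convergence is false: the cosimplicial levels $C(Y)^n=U\K^nY$ do \emph{not} become increasingly connected as $n$ grows --- each $\K^nY$ is still only $1$-connected. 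What actually increases is the connectivity of the \emph{layers} of the $\Tot$-tower, i.e., of the total homotopy fibers of the codegeneracy $n$-cubes of $C(Y)$ (the paper shows these are $(2n+1)$-connected, so the map $\holim_{\Delta^{\leq n}}C(Y)\rarrow\holim_{\Delta^{\leq n-1}}C(Y)$ is $(n+2)$-connected), and establishing this already requires a nontrivial cubical argument; it does not follow from any connectivity of the individual cosimplicial levels.

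Second, even granting convergence of the tower, the assertion that $\tilde{\ZZ}$ ``can be commuted past the limit using that it preserves connectivity'' is precisely the theorem to be proved, not a routine consequence of the Hurewicz theorem: a left adjoint need not commute with the finite homotopy limits $\holim_{\Delta^{\leq n}}$. The paper's proof of this step forms its technical core: one builds the $\infty$-cartesian $(n+1)$-cube $\widetilde{C(Y)}$, estimates the cartesian-ness of all of its faces --- using Dundas' higher Hurewicz theorem for the one face built purely from iterated Hurewicz maps, and a ``uniformity of faces'' retraction argument forced by the cosimplicial identities to transfer that estimate to the faces involving the arbitrary $\K$-coaction on $Y$ --- then feeds these into Goodwillie's higher dual Blakers--Massey theorem to get a $(2n+5)$-cocartesian estimate, and finally converts cocartesian to cartesian in $\sAb$ to conclude that $\tilde{\ZZ}\holim_{\Delta^{\leq n}}C(Y)\rarrow\holim_{\Delta^{\leq n}}\tilde{\ZZ}C(Y)$ is $(n+5)$-connected. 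None of this machinery, nor any substitute for it, appears in your proposal, so the essential-surjectivity half of the equivalence remains unproved as written.
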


\begin{rem}
\label{rem:elaboration_on_the_main_result}
The proof boils down to the following assertions on the integral chains functor $\tilde{\ZZ}$:
\begin{itemize}
\item[(a)] If $Y$ is a $1$-connected cofibrant $\K$-coalgebra, then the derived counit map
\begin{align*}
  \tilde{\ZZ}\holim\nolimits_\Delta C(Y)\xrightarrow{\wequiv} Y
\end{align*}
associated to \eqref{eq:derived_adjunction_main} is a weak equivalence.
\item[(b)] If $X'$ is a $1$-connected space, then the derived unit map
\begin{align*}
  X'\xrightarrow{\wequiv}\holim\nolimits_\Delta 
  C(\tilde{\ZZ}X')
\end{align*} 
associated to \eqref{eq:derived_adjunction_main} is tautologically the Bousfield-Kan $\ZZ$-completion map $X'\rarrow {X'}^\wedge_\ZZ$, and hence is a weak equivalence by \cite[III.5.4]{Bousfield_Kan}; in particular, the integral chains functor induces a weak equivalence
\begin{align}
  \label{eq:homotopically_fully_faithful_chains}
  \tilde{\ZZ}\colon\thinspace\Map^h_{\Space_*}(X,X')\wequiv\Map_\coAlgK(\tilde{\ZZ}X,\tilde{\ZZ}X')
\end{align}
on mapping spaces and hence is homotopically fully faithful on $1$-connected spaces.
\end{itemize}
Here, the realization of the Dwyer-Kan \cite[4.7]{Dwyer_Kan_function_complexes} homotopy function complex, denoted $\Map^h_{\Space_*}(X,X')$, can be replaced with the realization of the usual mapping complex, denoted $\Map_{\Space_*}(X,X')$, if $X'$ is fibrant in $\Space_*$. The right-hand side of \eqref{eq:homotopically_fully_faithful_chains} denotes the space of all derived $\K$-coalgebra maps from $\tilde{\ZZ}X$ to $\tilde{\ZZ}X'$ \cite[1.10]{Arone_Ching_classification} (Definition \ref{defn:mapping_spaces_of_derived_K_coalgebras}).
\end{rem}

\subsection{Corollaries of the main result}
The following are corollaries of the main result (Theorem \ref{MainTheorem}). Integral cochains versions of the first two results below were previously established by Mandell \cite[0.1]{Mandell_cochains_homotopy_type}, assuming additional finite type conditions; an interesting approach to some of the integral cochains results in \cite{Mandell_cochains_homotopy_type} was subsequently developed in Karoubi \cite{Karoubi}. In \cite[0.1]{Mandell_cochains_homotopy_type} it is shown that the integral cochains functor cannot be full on the homotopy category; on the other hand, an  advantage of the cochains setup is that $E_\infty$ cochain algebras have a more familiar ``homological algebra'' feel to them than, say, $\K$-coalgebras.

\begin{thm}[Classification theorem]
\label{thm:classification}
A pair of $1$-connected pointed spaces $X$ and $X'$ are weakly equivalent if and only if the integral chains $\tilde{\ZZ}X$ and $\tilde{\ZZ}X'$ are weakly equivalent as derived $\K$-coalgebras.
\end{thm}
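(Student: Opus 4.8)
The plan is to derive the classification theorem as a formal consequence of Theorem \ref{MainTheorem}, handling the two implications separately; the forward implication is elementary, and only the reverse one uses the substance of the main result.

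For the ``only if'' direction I would first check that the integral chains functor preserves weak equivalences. Since every simplicial set is cofibrant and $\tilde{\ZZ}$ is the reduced free abelian group functor, a weak equivalence $X\wequiv X'$ of $1$-connected pointed spaces is sent to a weak equivalence $\tilde{\ZZ}X\wequiv\tilde{\ZZ}X'$ of simplicial abelian groups; naturality of the $\K$-coaction in the space variable then promotes this to a weak equivalence of strict, hence of derived, $\K$-coalgebras. A zig-zag of weak equivalences of spaces therefore yields a zig-zag of derived $\K$-coalgebra weak equivalences, so $\tilde{\ZZ}X$ and $\tilde{\ZZ}X'$ are weakly equivalent as derived $\K$-coalgebras.

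For the ``if'' direction, the key step is to read the mapping space weak equivalences \eqref{eq:homotopically_fully_faithful_chains} of Theorem \ref{MainTheorem} (Remark \ref{rem:elaboration_on_the_main_result}(b)) through $\pi_0$: they assert exactly that integral chains induces a fully faithful functor
\begin{align*}
  \tilde{\ZZ}\colon\thinspace \Ho(\Space_*)\rarrow\Ho(\coAlgK)
\end{align*}
after restriction to the full subcategories of $1$-connected objects (note that $\tilde{\ZZ}X$ is $1$-connected whenever $X$ is, and that morphisms in either homotopy category are computed as $\pi_0$ of the respective mapping spaces). I would then invoke the standard categorical fact that a fully faithful functor reflects isomorphisms: choosing an isomorphism $\tilde{\ZZ}X\iso\tilde{\ZZ}X'$ together with its inverse in $\Ho(\coAlgK)$, lifting both along fullness, and verifying via faithfulness that the two composites are identities exhibits $X$ and $X'$ as isomorphic in $\Ho(\Space_*)$, i.e.\ weakly equivalent. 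Note that only full faithfulness is needed here, not the essential surjectivity half of the Dwyer-Kan equivalence, since $\tilde{\ZZ}X$ and $\tilde{\ZZ}X'$ are manifestly in the image.

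The one point requiring care, and hence the main (if modest) obstacle, is foundational rather than computational: one must identify the hypothesis ``$\tilde{\ZZ}X$ and $\tilde{\ZZ}X'$ are weakly equivalent as derived $\K$-coalgebras'' with ``$\tilde{\ZZ}X$ and $\tilde{\ZZ}X'$ are isomorphic in the homotopy category $\Ho(\coAlgK)$ of the Arone-Ching enrichment.'' Because that enrichment is designed to be homotopy meaningful, a derived $\K$-coalgebra weak equivalence becomes invertible in $\Ho(\coAlgK)$, and conversely any zig-zag of such weak equivalences determines an isomorphism there; I would record this equivalence of notions explicitly so that the reflection-of-isomorphisms argument applies cleanly and the appeal to Theorem \ref{MainTheorem} is transparent.
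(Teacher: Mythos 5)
Your proposal is correct and is essentially the argument the paper intends: Theorem \ref{thm:classification} is stated as a formal corollary of Theorem \ref{MainTheorem}, with the forward direction coming from $\tilde{\ZZ}$ being (left Quillen and hence) weak-equivalence-preserving, and the reverse direction from reading \eqref{eq:homotopically_fully_faithful_chains} through $\pi_0$ to get full faithfulness on $1$-connected homotopy categories and then reflecting isomorphisms. The foundational point you flag---that a derived $\K$-coalgebra weak equivalence is the same as an isomorphism in $\Ho(\coAlgK)$---is exactly Proposition \ref{prop:weak_equivalence_if_and_only_if_iso_in_homotopy_category} in the paper, so your argument closes cleanly.
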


\begin{thm}[Classification of maps theorem]
\label{thm:classification_maps}
Let $X,X'$ be pointed spaces. Assume that $X'$ is $1$-connected and fibrant.
\begin{itemize}
\item[(a)] \emph{(Existence)} Given any map $\phi$ in $[\tilde{\ZZ}X,\tilde{\ZZ}X']_\K$, there exists a map $f$ in $[X,X']$ such that $\phi=\tilde{\ZZ}(f)$.
\item[(b)] \emph{(Uniqueness)} For each pair of maps $f,g$ in $[X,X']$, $f=g$ if and only if $\tilde{\ZZ}(f)=\tilde{\ZZ}(g)$ in the homotopy category of $\K$-coalgebras.
\end{itemize}
\end{thm}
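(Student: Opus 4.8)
The plan is to obtain both statements by passing to path components in the mapping-space weak equivalence \eqref{eq:homotopically_fully_faithful_chains} of Remark \ref{rem:elaboration_on_the_main_result}(b). Since $X'$ is $1$-connected, that remark supplies, for every pointed space $X$, a weak equivalence
\[
  \tilde{\ZZ}\colon\thinspace\Map^h_{\Space_*}(X,X')\wequiv\Map_\coAlgK(\tilde{\ZZ}X,\tilde{\ZZ}X')
\]
induced by the integral chains functor; this is exactly where $1$-connectivity enters, through the identification in Remark \ref{rem:elaboration_on_the_main_result}(b) of the derived unit with the Bousfield-Kan $\ZZ$-completion map together with \cite[III.5.4]{Bousfield_Kan}. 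Because $X'$ is moreover fibrant, the left-hand homotopy function complex may be replaced by the ordinary mapping complex $\Map_{\Space_*}(X,X')$. I emphasize that no connectivity hypothesis on $X$ is required: the equivalence holds for arbitrary pointed $X$ as soon as $X'$ is $1$-connected and fibrant.

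Applying $\pi_0$ then yields a bijection of sets
\[
  \tilde{\ZZ}_*\colon\thinspace\pi_0\Map_{\Space_*}(X,X')\xrightarrow{\iso}\pi_0\Map_\coAlgK(\tilde{\ZZ}X,\tilde{\ZZ}X'),
\]
and the next step is to identify each side with the relevant hom-set. On the space side, every simplicial set is cofibrant and $X'$ is fibrant, so $\pi_0\Map_{\Space_*}(X,X')=[X,X']$. On the coalgebra side, the defining feature of the Arone-Ching highly homotopy coherent enrichment is that $\Map_\coAlgK(\tilde{\ZZ}X,\tilde{\ZZ}X')$ is the space of \emph{derived} $\K$-coalgebra maps, so that its path components record morphisms in the homotopy category of $\K$-coalgebras rather than strict coalgebra maps; hence $\pi_0\Map_\coAlgK(\tilde{\ZZ}X,\tilde{\ZZ}X')=[\tilde{\ZZ}X,\tilde{\ZZ}X']_\K$. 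Under these two identifications $\tilde{\ZZ}_*$ is precisely the map $f\mapsto\tilde{\ZZ}(f)$ on homotopy classes.

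With this in place both assertions read off at once. Statement (a) is the surjectivity of $\tilde{\ZZ}_*$: every $\phi\in[\tilde{\ZZ}X,\tilde{\ZZ}X']_\K$ equals $\tilde{\ZZ}(f)$ for some $f\in[X,X']$. For statement (b), the implication $f=g\Rightarrow\tilde{\ZZ}(f)=\tilde{\ZZ}(g)$ is just functoriality (well-definedness of $\tilde{\ZZ}_*$), and the converse $\tilde{\ZZ}(f)=\tilde{\ZZ}(g)\Rightarrow f=g$ is the injectivity of the bijection $\tilde{\ZZ}_*$.

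As all of the homotopy-theoretic content is carried by Theorem \ref{MainTheorem} and Remark \ref{rem:elaboration_on_the_main_result}, the only genuine care needed here is the bookkeeping at the level of $\pi_0$, and I expect the correct identification of the two sets of components to be the main point to get right. On the space side this relies on the fibrancy of $X'$, which both licenses the passage from the homotopy function complex to the ordinary mapping complex and gives $\pi_0\Map_{\Space_*}(X,X')=[X,X']$; on the coalgebra side it relies on the Arone-Ching enrichment already being a homotopically meaningful (derived) mapping space, so that no further fibrant-cofibrant replacement of $\tilde{\ZZ}X$ or $\tilde{\ZZ}X'$ is needed in order to read off $[\tilde{\ZZ}X,\tilde{\ZZ}X']_\K$ as its set of components.
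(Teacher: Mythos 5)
Your argument is correct and is exactly the route the paper intends: Theorem \ref{thm:classification_maps} is stated as a corollary of Theorem \ref{MainTheorem}, and the implicit proof is to apply $\pi_0$ to the weak equivalence \eqref{eq:homotopically_fully_faithful_chains} of Remark \ref{rem:elaboration_on_the_main_result}(b) (equivalently, Proposition \ref{prop:formal_adjunction_and_iso_argument}, using that a $1$-connected $X'$ is $\ZZ$-complete by \cite[III.5.4]{Bousfield_Kan}), identifying the two sets of components with $[X,X']$ and $[\tilde{\ZZ}X,\tilde{\ZZ}X']_\K=\pi_0\Map_\coAlgK(\tilde{\ZZ}X,\tilde{\ZZ}X')$ as you do. Your bookkeeping of where fibrancy of $X'$ and the definition of the homotopy category of $\K$-coalgebras enter matches the paper's setup, so there is nothing to add.
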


\begin{thm}[Characterization theorem]
\label{thm:characterization}
A cofibrant $\K$-coalgebra $Y$ is weakly equivalent, via derived $\K$-coalgebra maps, to the integral chains $\tilde{\ZZ}X$ of some $1$-connected space $X$ if and only if $Y$ is $1$-connected.
\end{thm}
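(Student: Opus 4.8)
The plan is to prove the biconditional in two directions, leaning on Theorem~\ref{MainTheorem} and the two assertions collected in Remark~\ref{rem:elaboration_on_the_main_result}; the forward direction is soft, while the converse packages the \emph{essential surjectivity} half of the equivalence in Theorem~\ref{MainTheorem} and is where the real content lies. For the forward direction, suppose $Y$ is weakly equivalent, through derived $\K$-coalgebra maps, to $\tilde{\ZZ}X$ for some $1$-connected space $X$. A weak equivalence of derived $\K$-coalgebras is in particular an underlying weak equivalence of simplicial abelian groups, and $1$-connectivity is detected on underlying homotopy groups; hence it suffices to check that $\tilde{\ZZ}X$ is $1$-connected. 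This is immediate, since $\pi_n(\tilde{\ZZ}X)\iso\tilde{H}_n(X;\ZZ)$ and the reduced integral homology of a $1$-connected space vanishes in degrees $\leq 1$. Therefore $Y$ is $1$-connected.

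For the converse, suppose $Y$ is a $1$-connected cofibrant $\K$-coalgebra and set $X:=\holim_\Delta C(Y)$. By Remark~\ref{rem:elaboration_on_the_main_result}(a) the derived counit $\tilde{\ZZ}X\wequiv Y$ is a weak equivalence, so $Y$ is weakly equivalent, via derived $\K$-coalgebra maps, to the integral chains of $X$; it remains only to verify that this witnessing space $X$ is itself $1$-connected, and then $X$ is the required space and the proof is complete. I would first record that, because $Y$ is $1$-connected and both $U$ and $\tilde{\ZZ}$ preserve $1$-connectivity, each cosimplicial degree $C(Y)^s = U\K^s Y$ is a $1$-connected pointed space, so that $\pi_t C(Y)^s=0$ for $t\leq 1$ and all $s$.

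The crux, and the step I expect to demand the most care, is passing from this levelwise $1$-connectivity to $1$-connectivity of the homotopy limit $X=\holim_\Delta C(Y)$. This is genuinely delicate: the totalization of a levelwise $1$-connected cosimplicial space is not automatically $1$-connected, since in the tower computing $\Tot$ the fiber over stage $s$ is $\Omega^s$ of a $1$-connected space, whose connectivity degrades as $s$ grows, and the homotopy spectral sequence of $C(Y)$ is fringed, so one cannot simply read off $\pi_0$ and $\pi_1$ of the totalization from a vanishing region on the $E_2$-page. The plan is instead to feed in the convergence supplied by completion theory: the levels of $C(Y)$ are $1$-connected, hence nilpotent, and $X$ is their homotopy limit along the cobar tower, so one argues that $X$ is connected and nilpotent and that the associated $\ZZ$-homology spectral sequence converges completely with a vanishing line in the $1$-connected range. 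Since $\tilde{\ZZ}X\wequiv Y$ forces $\tilde{H}_0(X;\ZZ)=\tilde{H}_1(X;\ZZ)=0$, the vanishing of $\tilde{H}_1$ then collapses the nilpotent fundamental group, giving $\pi_1 X=0$. This connectivity input for the cobar totalization---equivalently, the strong convergence and edge behavior of the homology spectral sequence attached to a $1$-connected $\K$-coalgebra---is precisely the technical heart underlying Theorem~\ref{MainTheorem}, and may alternatively be invoked directly from the connectivity analysis carried out there.
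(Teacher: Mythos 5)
Your overall structure is the intended one: the forward direction is the Hurewicz isomorphism $\pi_*(\tilde{\ZZ}X)\iso\tilde{H}_*(X;\ZZ)$ plus the fact that a weak equivalence of derived $\K$-coalgebras is detected on underlying objects, and the converse takes $X:=\holim_\Delta C(Y)$ and invokes the derived counit equivalence of Remark \ref{rem:elaboration_on_the_main_result}(a). The one place you over-engineer is the ``crux'': the $1$-connectivity of $\holim_\Delta C(Y)$ is not something you need to re-derive from completion theory, because it is literally part of the statement of Theorem \ref{thm:connectivities_for_map_into_n_th_stage} --- taking $n=0$ there, the map $\holim_\Delta C(Y)\rarrow\holim_{\Delta^{\leq 0}}C(Y)=UY$ is a $3$-connected map between $1$-connected objects, which forces $\pi_0$ and $\pi_1$ of $\holim_\Delta C(Y)$ to vanish. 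That statement is in turn proved by the tower estimates of Theorem \ref{thm:estimating_connectivity_of_maps_in_tower_C_of_Y} (each map $\holim_{\Delta^{\leq n}}C(Y)\rarrow\holim_{\Delta^{\leq n-1}}C(Y)$ is $(n+2)$-connected, with strictly increasing connectivity), which is exactly how the fringing problem you rightly flag gets resolved. By contrast, the nilpotency route you sketch as your primary argument is not sound as written: a homotopy limit of nilpotent spaces need not be connected or nilpotent, so ``one argues that $X$ is connected and nilpotent'' begs precisely the question at issue, and the step where vanishing of $\tilde{H}_1$ collapses $\pi_1$ presupposes the connectivity you are trying to establish. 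Since you explicitly offer the fallback of citing the paper's connectivity analysis, your proof closes up correctly; just lead with that citation and drop the nilpotency detour.
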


\subsection{Strategy of attack}

We were encouraged by the results in \cite{Goerss_simplicial_chains} showing that the Bousfield-Kan completion map should be the derived unit map of a derived comparison adjunction between spaces and coalgebraic data; this result foreshadows the later developments and ideas in \cite{Arone_Ching_classification, Francis_Gaitsgory, Hess} on homotopical descent. Our argument, motivated by \cite{Ching_Harper_derived_Koszul_duality}, involves leveraging Goodwillie's higher dual Blakers-Massey theorem \cite[2.6]{Goodwillie_calculus_2}, together with the ``uniformity of faces'' behavior forced by the cosimplicial identities via existence of appropriate retractions, along with strong estimates for the uniform cartesian-ness of iterations of the Hurewicz map provided by Dundas' higher Hurewicz theorem \cite[2.6]{Dundas_relative_K_theory}, to obtain connectivity estimates for commuting the left derived integral chains functor past the right derived limit of the associated cosimplicial cobar construction on coalgebra complexes.

\subsection{Commuting integral chains with holim of a cobar construction}
Once the framework is setup, the main result boils down to proving that the left derived integral chains functor $\tilde{\ZZ}$ commutes,\begin{align}
\label{eq:key_technical_result}
  \tilde{\ZZ}\holim\nolimits_\Delta C(Y)\wequiv
  \holim\nolimits_\Delta \tilde{\ZZ}C(Y)
\end{align}
up to weak equivalence, with the right derived limit functor $\holim_\Delta $, when composed with the cosimplicial cobar construction $C$ associated to integral homology and evaluated on $1$-connected coalgebra complexes over $\K$; but our homotopical estimates are stronger---they prove strong convergence of the associated homotopy spectral sequence (Theorem \ref{thm:strong_convergence_ss}).

\subsection{Organization of the paper}

In Section \ref{sec:outline_of_the_argument} we outline the argument of our main result. The proof naturally breaks up into four subsidiary results. In Section \ref{sec:proofs} we review the integral chains functor and then prove the main result. Sections \ref{sec:simplicial_structures} and \ref{sec:homotopy_limit_towers} are background sections; for the convenience of the reader we briefly recall some preliminaries on simplicial structures and homotopy limits that are essential to understanding this paper. In Section \ref{sec:homotopy_theory_K_coalgebras} we recall briefly the Arone-Ching enrichments and associated homotopy theory of $\K$-coalgebras in the context of this paper, and in Section \ref{sec:derived_fundamental_adjunction} the associated derived adjunction. For the experts, who are also familiar with the enrichments in Arone-Ching \cite{Arone_Ching_classification}, it should suffice to read Sections \ref{sec:outline_of_the_argument} and \ref{sec:proofs} for a complete proof of the main result.

\subsection*{Acknowledgments}

The authors would like to thank to Michael Ching for useful discussions throughout this project.  The second author would like to thank Bj\o rn Dundas, Bill Dwyer, Haynes Miller, and Crichton Ogle for useful remarks, and Mark Behrens, Dan Burghelea, Lars Hesselholt, Rick Jardine, Mike Mandell, Nath Rao and Dennis Sullivan for helpful comments. The second author is grateful to Haynes Miller for a stimulating and enjoyable visit to the Massachusetts Institute of Technology in early spring 2015, and to Bj\o rn Dundas for a stimulating and enjoyable visit to the University of Bergen in late spring 2015, and for their invitations which made this possible. The authors would like to thank an anonymous referee for helpful comments and suggestions. The first author was supported in part by National Science Foundation grants DMS-1510640 and  DMS-1547357.

\section{Outline of the argument}
\label{sec:outline_of_the_argument}

We will now outline the proof of our main result (Theorem \ref{MainTheorem}). Since the derived unit map is tautologically the Bousfield-Kan $\ZZ$-completion map $X'\rarrow {X'}^\wedge_\ZZ$, which is proved to be a weak equivalence on $1$-connected spaces in Bousfield-Kan \cite[III.5.4]{Bousfield_Kan}, proving the main result reduces to verifying that the derived counit map is a weak equivalence. Our attack strategy naturally breaks up into four subsidiary results; Theorems \ref{thm:estimating_connectivity_of_maps_in_tower_C_of_Y}, \ref{thm:connectivities_for_map_into_n_th_stage}, \ref{thm:connectivities_for_map_that_commutes_Z_into_inside_of_holim}, and \ref{thm:ZC_commutes_with_desired_holim}.

The following theorem is proved in Section \ref{sec:proofs} (just after Proposition \ref{prop:iterated_hofiber_codegeneracy_cube}).

\begin{thm}
\label{thm:estimating_connectivity_of_maps_in_tower_C_of_Y}
If $Y$ is a $1$-connected cofibrant $\K$-coalgebra and $n\geq 1$, then the natural map
\begin{align}
\label{eq:tower_map_from_n_th_stage_to_next_lower_stage}
  \holim\nolimits_{\Delta^{\leq n}}C(Y)&\rightarrow
  \holim\nolimits_{\Delta^{\leq n-1}}C(Y)
\end{align}
is an $(n+2)$-connected map between $1$-connected objects.
\end{thm}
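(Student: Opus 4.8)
The plan is to analyze the tower $\{\holim_{\Delta^{\leq n}}C(Y)\}_n$ by identifying the fiber of the map \eqref{eq:tower_map_from_n_th_stage_to_next_lower_stage} and estimating its connectivity. The standard fact about homotopy limits over truncated cosimplicial diagrams is that the fiber of the map from the $n$-th partial totalization to the $(n-1)$-st is modeled by an $n$-fold loop space on the $n$-th matching object, or more precisely by $\Omega^n$ of the total homotopy fiber of the $n$-th codegeneracy cube (equivalently, the $n$-th ``latching/matching'' data). Concretely, I would invoke the Bousfield-Kan description of the $\Tot$-tower: the homotopy fiber of $\Tot_n \to \Tot_{n-1}$ is $\Omega^n$ of the homotopy fiber of the $n$-th cosimplicial object into its $n$-th matching space, built from the codegeneracy maps. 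This is presumably exactly the content packaged in the referenced Proposition \ref{prop:iterated_hofiber_codegeneracy_cube}, which presents this fiber as an iterated homotopy fiber of a codegeneracy cube.

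First I would reduce the connectivity estimate for the tower map to a connectivity estimate for this total homotopy fiber of the $n$-dimensional codegeneracy cube. Since the cube is built from $C(Y)^n = \K^{n}$ applied to $Y$ in the appropriate indexing, and since $\K = \tilde{\ZZ}U$ is the integral homology comonad, each face of the cube is an iterate of the Hurewicz-type map. The key input is then a uniform cartesian-ness estimate for the cube, which I expect to come from the higher dual Blakers-Massey theorem \cite[2.6]{Goodwillie_calculus_2} together with Dundas' higher Hurewicz theorem \cite[2.6]{Dundas_relative_K_theory}, as advertised in the Strategy of Attack. The mechanism is that the map $\id \to \K$ (the coaction/Hurewicz map) raises connectivity, so an $n$-cube each of whose initial edges is highly connected, when combined with ``uniformity of faces'' forced by the cosimplicial identities via retractions, is highly cartesian; taking $\Omega^n$ then shifts the estimate down by $n$ to land on the claimed $(n+2)$-connectivity.

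The main obstacle, and where the real work lies, will be getting the \emph{uniform} connectivity estimate on the codegeneracy cube correct — precisely tracking how the $1$-connectedness of $Y$ propagates through the iterated Hurewicz maps so that the total fiber is $(2n+2)$-connected (so that after applying $\Omega^n$ we land at $(n+2)$). The delicate point is that a naive edge-by-edge Blakers-Massey estimate on an $n$-cube typically gives cartesian-ness growing only linearly with a small slope, so one must exploit that \emph{every} codegeneracy edge is an instance of the same Hurewicz map and that these maps compose to raise connectivity additively; the ``uniformity of faces'' from the cosimplicial identities (the existence of retractions splitting off the degeneracies) is what upgrades the pointwise estimates into the sharp total-fiber estimate. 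I would therefore organize the argument as: (i) identify the fiber via Proposition \ref{prop:iterated_hofiber_codegeneracy_cube}; (ii) establish that the codegeneracy cube is uniformly cartesian with the needed slope, using Dundas' theorem for the individual Hurewicz maps and Goodwillie's higher Blakers-Massey to assemble them; (iii) verify $1$-connectedness of each stage $\holim_{\Delta^{\leq n}}C(Y)$ by induction, using that the fiber is highly connected and the base is $1$-connected; and (iv) conclude the $(n+2)$-connectivity of the tower map by the long exact sequence of the fibration. Throughout, the $1$-connectedness hypothesis on $Y$ is what seeds the induction and keeps the base of the tower $1$-connected at every stage.
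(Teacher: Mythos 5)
Your overall architecture matches the paper's: the fiber of the tower map is identified with $\Omega^n$ of the total homotopy fiber of the codegeneracy $n$-cube $\capY_n$ associated to $C(Y)$ (Proposition \ref{prop:iterated_homotopy_fibers_calculation}), and the work is to show that this total fiber is highly connected (Proposition \ref{prop:iterated_hofiber_codegeneracy_cube}). One small correction first: the target is that $(\iter\hofib)\capY_n$ be $(2n+1)$-connected, not $(2n+2)$-connected --- an $(n+2)$-connected map has $(n+1)$-connected homotopy fiber, and undoing the $n$-fold looping gives $2n+1$.

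The substantive issue is your step (ii). You propose to obtain the cube estimate by feeding edge-wise Hurewicz estimates into Goodwillie's higher dual Blakers--Massey theorem, and you correctly worry that such an assembly gives cartesian-ness with the wrong slope; the proposal does not resolve that worry, and as stated this step would not deliver the needed connectivity. The paper does not use Blakers--Massey for this theorem at all (it is reserved for the cocartesian-ness estimate of Theorem \ref{thm:cocartesian_and_cartesian_estimates}, needed to commute $\tilde{\ZZ}$ past the homotopy limit). Two points do the actual work. First, Dundas' higher Hurewicz theorem (Proposition \ref{prop:higher_hurewicz_theorem}) is already a statement about whole cubes, not individual maps: since the coface cube satisfies $\capX_{m+1}=(\capX_m\rightarrow\tilde{\ZZ}\capX_m)$ and the $0$-cube $UY$ is $(\id+2)$-cartesian, a one-line induction gives that $\capX_n$ is $(\id+2)$-cartesian, hence $(n+2)$-cartesian, with no assembly step and no loss of slope. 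Second, the codegeneracy cube $\capY_n$ is not itself built from Hurewicz maps (its edges are counits $s^j$, not units $d^i$), so it cannot be estimated directly this way; the paper instead uses the retraction/``uniformity of faces'' identification $(\iter\hofib)\capX_n\wequiv\Omega^{n}(\iter\hofib)\capY_{n}$ (Proposition \ref{prop:comparing_faces_of_coface_cube_with_codegeneracy_cube}, applied to the unique $n$-face of $\widetilde{C(Y)}$ not involving the $\K$-coaction) to transfer the $(n+1)$-connectivity of $(\iter\hofib)\capX_n$ into the statement that $(\iter\hofib)\capY_n$ is $(2n+1)$-connected. With that mechanism substituted for your step (ii), your steps (i), (iii), and (iv) go through as you describe.
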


\begin{thm}
\label{thm:connectivities_for_map_into_n_th_stage}
If $Y$ is a $1$-connected cofibrant $\K$-coalgebra and $n\geq 0$, then the natural maps
\begin{align}
\label{eq:canonical_map_needed_to_discuss}
  \holim\nolimits_\Delta C(Y)&\rightarrow
  \holim\nolimits_{\Delta^{\leq n}}C(Y)\\
\label{eq:the_other_canonical_map_needed}
  \tilde{\ZZ}\holim\nolimits_\Delta C(Y)&\rightarrow
  \tilde{\ZZ}\holim\nolimits_{\Delta^{\leq n}}C(Y)
\end{align}
are $(n+3)$-connected maps between $1$-connected objects.
\end{thm}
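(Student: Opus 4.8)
The plan is to extract both estimates from Theorem~\ref{thm:estimating_connectivity_of_maps_in_tower_C_of_Y} by a routine analysis of the Bousfield--Kan totalization tower. Throughout, a map is \emph{$c$-connected} if it induces an isomorphism on $\pi_i$ for $i<c$ and a surjection on $\pi_c$. Write $T_k=\holim_{\Delta^{\leq k}}C(Y)$ for the $k$-th partial totalization and $T_\infty=\holim_\Delta C(Y)$. Recall (Section~\ref{sec:homotopy_limit_towers}) that $T_\infty$ is the homotopy inverse limit of the tower
\[
  \cdots\rightarrow T_k\rightarrow T_{k-1}\rightarrow\cdots\rightarrow T_0,
\]
and that the map \eqref{eq:canonical_map_needed_to_discuss} is the canonical projection $T_\infty\rightarrow T_n$. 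By Theorem~\ref{thm:estimating_connectivity_of_maps_in_tower_C_of_Y}, each map $T_k\rightarrow T_{k-1}$ with $k\geq 1$ is $(k+2)$-connected between $1$-connected objects.

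To estimate the connectivity of $T_\infty\rightarrow T_n$ I would use the short exact sequence
\[
  0\rightarrow\lim\nolimits^1_k\pi_{i+1}(T_k)\rightarrow\pi_i(T_\infty)\rightarrow\lim\nolimits_k\pi_i(T_k)\rightarrow 0
\]
for the homotopy groups of a homotopy inverse limit of a tower. Fix $i$. Since $T_k\rightarrow T_{k-1}$ is $(k+2)$-connected, the map $\pi_i(T_k)\rightarrow\pi_i(T_{k-1})$ is an isomorphism whenever $i<k+2$; hence for fixed $i$ the tower $\{\pi_i(T_k)\}_k$ consists of isomorphisms for all large $k$, so the $\lim^1$-term vanishes and $\pi_i(T_\infty)\iso\pi_i(T_k)$ for $k\gg 0$. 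Tracking indices, for $k\geq n+2$ the maps $\pi_i(T_k)\rightarrow\pi_i(T_{k-1})$ are isomorphisms in the range $i\leq n+3<k+2$, so $\pi_i(T_\infty)\iso\pi_i(T_{n+1})$ for $i\leq n+3$; composing with the $(n+3)$-connected map $\pi_i(T_{n+1})\rightarrow\pi_i(T_n)$ then shows that $T_\infty\rightarrow T_n$ is an isomorphism on $\pi_i$ for $i<n+3$ and a surjection on $\pi_{n+3}$. Since $\pi_i(T_\infty)\iso\pi_i(T_0)$ for $i\leq 1$ and $T_0$ is $1$-connected, $T_\infty$ is $1$-connected; this proves \eqref{eq:canonical_map_needed_to_discuss} is $(n+3)$-connected between $1$-connected objects.

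The second map \eqref{eq:the_other_canonical_map_needed} is simply $\tilde{\ZZ}$ applied to the first. Because every object in sight is a simplicial set, hence cofibrant in $\Space_*$, the integral chains functor $\tilde{\ZZ}$ preserves weak equivalences and agrees with its left derived functor, with $\pi_*\tilde{\ZZ}(-)=\tilde{H}_*(-;\ZZ)$. I would then invoke the relative Hurewicz theorem: a $c$-connected map $f$ between $1$-connected spaces induces an isomorphism on $\tilde{H}_i$ for $i<c$ and a surjection for $i=c$, so that $\tilde{\ZZ}(f)$ is again $c$-connected, while $\tilde{\ZZ}$ sends $1$-connected spaces to $1$-connected objects (as $\tilde{H}_0=\tilde{H}_1=0$). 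Applying this with $c=n+3$ to the map \eqref{eq:canonical_map_needed_to_discuss} gives the assertion for \eqref{eq:the_other_canonical_map_needed}.

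All of the genuine content sits in Theorem~\ref{thm:estimating_connectivity_of_maps_in_tower_C_of_Y}; granting it, the present statement is bookkeeping, and the one thing to get right is the sharpness of the estimate. The two points to watch are (i) that the $\lim^1$-term contributes nothing in the relevant range --- automatic here since for fixed $i$ the tower $\{\pi_i(T_k)\}$ is eventually constant, rather than merely Mittag--Leffler --- and (ii) that $\tilde{\ZZ}$ neither raises nor lowers connectivity, so that no index is lost in passing from \eqref{eq:canonical_map_needed_to_discuss} to \eqref{eq:the_other_canonical_map_needed}. Both points rely on the $1$-connectedness hypothesis, which is precisely why it is carried throughout.
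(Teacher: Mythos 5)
Your proposal is correct and follows essentially the same route as the paper: the first estimate is obtained from Theorem \ref{thm:estimating_connectivity_of_maps_in_tower_C_of_Y} by passing to the limit of the totalization tower (the paper states this in one line; your $\lim^1$ bookkeeping is the standard justification), and the second follows by applying the relative Hurewicz theorem to see that $\tilde{\ZZ}$ preserves the connectivity of maps between $1$-connected objects.
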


\begin{proof}
Consider the first part. By Theorem \ref{thm:estimating_connectivity_of_maps_in_tower_C_of_Y} each of the maps in the holim tower $\{\holim_{\Delta^{\leq n}}C(Y)\}_n$, above level $n$, is at least $(n+3)$-connected. It follows that the map \eqref{eq:canonical_map_needed_to_discuss} is $(n+3)$-connected. The second part follows from the first part, since by the Hurewicz theorem the integral chains functor $\tilde{\ZZ}$ preserves such connectivities.
\end{proof}

\begin{rem}
It is worth pointing out that $\holim_\Delta$ (resp. $\holim_{\Delta^{\leq n}}$) (Definition \ref{defn:homotopy_limit_derived}) is a derived version of the familiar $\Tot$ (resp. $\Tot_n$) (Definitions \ref{defn:totalization_and_restricted_totalization} and \ref{defn:totalization_functors}, and Proposition \ref{prop:comparing_holim_with_Tot}).
\end{rem}

We prove the following theorem in Section \ref{sec:proofs} (following Theorem \ref{thm:cocartesian_and_cartesian_estimates}). At the technical heart of the proof lies Goodwillie's higher dual Blakers-Massey theorem \cite[2.6]{Goodwillie_calculus_2} (Proposition \ref{prop:higher_dual_blakers_massey}). To carry out this line of attack, the input to \cite[2.6]{Goodwillie_calculus_2} requires the homotopical analysis of an $\infty$-cartesian $(n+1)$-cube associated to the $n$-th stage, $\holim_{\Delta^{\leq n}}C(Y)$, of the $\holim$ tower associated to $C(Y)$; it is built from coface maps in $C(Y)$ (Definition \ref{defn:the_wide_tilde_construction}). The needed homotopical analysis is worked out by leveraging the strong uniform cartesian-ness estimates for iterations of the Hurewicz map, applied to $X=UY$, in Dundas' higher Hurewicz theorem \cite[2.6]{Dundas_relative_K_theory} (Proposition \ref{prop:higher_hurewicz_theorem}), together with the ``uniformity of faces'' behavior forced by the cosimplicial identities (see \eqref{eq:sequence_paths_1_cubes}, \eqref{eq:sequence_paths_of_squares}, and Proposition \ref{prop:comparing_faces_of_coface_cube_with_codegeneracy_cube}) which ensures that the ``other faces'' (any of the ones involving the $\K$-coaction map on $Y$) required for input to \cite[2.6]{Goodwillie_calculus_2} have similar cartesian-ness estimates forced on them.

\begin{thm}
\label{thm:connectivities_for_map_that_commutes_Z_into_inside_of_holim}
If $Y$ is a $1$-connected cofibrant $\K$-coalgebra and $n\geq 1$, then the natural map
\begin{align}
\label{eq:commuting_Z_past_holim_delta}
  \tilde{\ZZ}\holim\nolimits_{\Delta^{\leq n}} C(Y)\rightarrow
  \holim\nolimits_{\Delta^{\leq n}} \tilde{\ZZ}\,C(Y),
\end{align}
is $(n+5)$-connected; the map is a weak equivalence for $n=0$.
\end{thm}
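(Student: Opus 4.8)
The plan is to reduce the statement to a single cartesian-ness estimate for an $(n+1)$-cube and then feed that cube into Goodwillie's machinery. First I would use the wide tilde construction (Definition \ref{defn:the_wide_tilde_construction}) to assemble, out of the coface maps of the cobar construction $C(Y)$, an $(n+1)$-cube $\mathcal{X}$ that is $\infty$-cartesian and is arranged so that its initial vertex is $\holim_{\Delta^{\leq n}}C(Y)$ while the homotopy limit of the remaining (nonempty-subset) vertices recovers $\holim_{\Delta^{\leq n}}C(Y)$ as well. Applying the left derived functor $\tilde{\ZZ}$ vertexwise (using cofibrancy of $Y$ so that everything is homotopy meaningful), the cube $\tilde{\ZZ}\mathcal{X}$ has initial vertex $\tilde{\ZZ}\holim_{\Delta^{\leq n}}C(Y)$, while the homotopy limit of its remaining vertices is $\holim_{\Delta^{\leq n}}\tilde{\ZZ}C(Y)$. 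Under this identification the natural map \eqref{eq:commuting_Z_past_holim_delta} is precisely the canonical comparison map measuring the failure of $\tilde{\ZZ}\mathcal{X}$ to be cartesian, so it suffices to prove that $\tilde{\ZZ}\mathcal{X}$ is $(n+5)$-cartesian. The case $n=0$ is immediate: then $\mathcal{X}$ is the $1$-cube with initial vertex $C(Y)^0 = UY$, and the map in question is an isomorphism, since both sides equal $\tilde{\ZZ}C(Y)^0$.

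Next I would produce the uniform cartesian-ness estimates that serve as input. The cube $\mathcal{X}$ is built entirely from iterations of the space level Hurewicz map evaluated on $X = UY$, which is $1$-connected because $Y$ is; Dundas' higher Hurewicz theorem \cite[2.6]{Dundas_relative_K_theory} (Proposition \ref{prop:higher_hurewicz_theorem}) then supplies strong, uniform estimates for how cartesian the iterated-Hurewicz faces of $\mathcal{X}$ are, with the estimates growing linearly in the number of active directions. The delicate point is that $\mathcal{X}$ also carries faces built from the $\K$-coaction map on $Y$ rather than from bare Hurewicz maps, and these ``other faces'' must be estimated as well in order to invoke the Blakers-Massey machinery. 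Here I would use the \emph{uniformity of faces} forced by the cosimplicial identities: the existence of suitable retractions (Proposition \ref{prop:comparing_faces_of_coface_cube_with_codegeneracy_cube}, via \eqref{eq:sequence_paths_1_cubes} and \eqref{eq:sequence_paths_of_squares}) shows that the coaction faces enjoy the same cartesian-ness estimates as the corresponding Hurewicz faces, so that every subcube of $\mathcal{X}$ entering the hypotheses is controlled uniformly.

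With uniform cartesian-ness estimates on all faces of $\mathcal{X}$ in hand, I would transfer them across $\tilde{\ZZ}$. Goodwillie's higher dual Blakers-Massey theorem \cite[2.6]{Goodwillie_calculus_2} (Proposition \ref{prop:higher_dual_blakers_massey}) is the engine for passing between cartesian and cocartesian estimates on the subcubes of $\mathcal{X}$; since $\tilde{\ZZ}$ is a left adjoint it preserves homotopy cocartesian cubes, and by the Hurewicz theorem it does not decrease connectivity, so the corresponding estimates are inherited by the subcubes of $\tilde{\ZZ}\mathcal{X}$. Combining these facts and tracking the arithmetic through the $(n+1)$ directions of the cube yields that $\tilde{\ZZ}\mathcal{X}$ is $(n+5)$-cartesian, which is exactly the claimed connectivity of \eqref{eq:commuting_Z_past_holim_delta}.

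I expect the main obstacle to be the uniformity-of-faces step together with the connectivity bookkeeping. Transferring Dundas' estimates, which apply directly only to the iterated-Hurewicz faces, to \emph{all} of the coaction faces of the cube—and doing so uniformly across every subcube rather than merely face-by-face—is the crux, and it is precisely where the retractions forced by the cosimplicial identities do the essential work. The secondary difficulty is keeping careful track of the connectivities as they propagate through the Blakers-Massey interchange and through $\tilde{\ZZ}$, so that the final estimate lands exactly on $n+5$ rather than something weaker.
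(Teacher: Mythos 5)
Your proposal is correct and follows essentially the same route as the paper: reduce the connectivity of the comparison map to the cartesian-ness of $\tilde{\ZZ}\widetilde{C(Y)}$, feed the Dundas higher Hurewicz estimates (extended to the coaction faces by the uniformity-of-faces retractions) into Goodwillie's higher dual Blakers--Massey theorem to get a $(2n+5)$-cocartesian estimate, push that through $\tilde{\ZZ}$, and convert back to a cartesian estimate using the stable interchange in $\sAb$. The only piece left implicit is the final arithmetic ($(2n+5)$-cocartesian for an $(n+1)$-cube in $\sAb$ gives $(n+5)$-cartesian), which you correctly flag as bookkeeping.
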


The following is a corollary of these connectivity estimates, together with a left cofinality argument in \cite[3.16]{Dror_Dwyer_long_homology}.

\begin{thm}
\label{thm:ZC_commutes_with_desired_holim}
If $Y$ is a $1$-connected cofibrant $\K$-coalgebra, then the natural maps
\begin{align}
\label{eq:ZC_commutes_with_desired_holim}
  \tilde{\ZZ}\holim\nolimits_\Delta C(Y)&\xrightarrow{\wequiv}
  \holim\nolimits_\Delta \tilde{\ZZ}\,C(Y)\xrightarrow{\wequiv}Y
\end{align}
are weak equivalences.
\end{thm}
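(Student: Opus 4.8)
The plan is to prove the two weak equivalences in \eqref{eq:ZC_commutes_with_desired_holim} separately. The left-hand map is \eqref{eq:key_technical_result}, and I would prove it is an isomorphism on $\pi_k$ for every $k$ by comparing both of its sides, via the maps of Theorems \ref{thm:connectivities_for_map_into_n_th_stage} and \ref{thm:connectivities_for_map_that_commutes_Z_into_inside_of_holim}, against the $n$-th stages of the two $\holim$ towers and then letting $n$ grow. The right-hand map $\holim\nolimits_\Delta\tilde{\ZZ}\,C(Y)\rightarrow Y$ is of a different nature: here I would identify $\tilde{\ZZ}\,C(Y)$ with the comonadic cobar resolution of $Y$ and invoke the extra-codegeneracy (left cofinality) argument of \cite[3.16]{Dror_Dwyer_long_homology}.

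Setting up the tower estimate for $\tilde{\ZZ}\,C(Y)$ is the first step. Applying $\tilde{\ZZ}$ to the $(n+2)$-connected tower map of Theorem \ref{thm:estimating_connectivity_of_maps_in_tower_C_of_Y}, and using that $\tilde{\ZZ}$ preserves connectivities of maps between $1$-connected objects by the Hurewicz theorem (exactly as in the proof of Theorem \ref{thm:connectivities_for_map_into_n_th_stage}), shows $\tilde{\ZZ}\holim\nolimits_{\Delta^{\leq n}}C(Y)\rightarrow\tilde{\ZZ}\holim\nolimits_{\Delta^{\leq n-1}}C(Y)$ is $(n+2)$-connected. I would then feed this into the naturality square
\[
\xymatrix{
\tilde{\ZZ}\holim\nolimits_{\Delta^{\leq n}} C(Y) \ar[r]\ar[d] & \holim\nolimits_{\Delta^{\leq n}} \tilde{\ZZ}\,C(Y)\ar[d]\\
\tilde{\ZZ}\holim\nolimits_{\Delta^{\leq n-1}} C(Y)\ar[r] & \holim\nolimits_{\Delta^{\leq n-1}} \tilde{\ZZ}\,C(Y)
}
\]
whose top and bottom maps are the $(n+5)$- and $(n+4)$-connected comparison maps of Theorem \ref{thm:connectivities_for_map_that_commutes_Z_into_inside_of_holim}. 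The diagonal $\tilde{\ZZ}\holim\nolimits_{\Delta^{\leq n}}C(Y)\rightarrow\holim\nolimits_{\Delta^{\leq n-1}}\tilde{\ZZ}\,C(Y)$ factors through the left-bottom composite, hence is $(n+2)$-connected; since it also factors as the right-hand tower map precomposed with the $(n+5)$-connected top map, the elementary fact that a composite $g\circ f$ which is $\ell$-connected, with $f$ being $m$-connected and $m\geq\ell+1$, forces $g$ to be $\ell$-connected shows the right-hand tower map $\holim\nolimits_{\Delta^{\leq n}}\tilde{\ZZ}\,C(Y)\rightarrow\holim\nolimits_{\Delta^{\leq n-1}}\tilde{\ZZ}\,C(Y)$ is $(n+2)$-connected. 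Running the same tower-convergence argument as in the proof of Theorem \ref{thm:connectivities_for_map_into_n_th_stage} then shows $\holim\nolimits_\Delta\tilde{\ZZ}\,C(Y)\rightarrow\holim\nolimits_{\Delta^{\leq n}}\tilde{\ZZ}\,C(Y)$ is $(n+3)$-connected.

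With these estimates I would finish the left-hand map as follows. Fix $k$, choose $n$ with $n+3>k$, and consider
\[
\xymatrix{
\tilde{\ZZ}\holim\nolimits_\Delta C(Y) \ar[r]\ar[d] & \holim\nolimits_\Delta \tilde{\ZZ}\,C(Y)\ar[d]\\
\tilde{\ZZ}\holim\nolimits_{\Delta^{\leq n}} C(Y)\ar[r] & \holim\nolimits_{\Delta^{\leq n}} \tilde{\ZZ}\,C(Y)
}
\]
The left vertical map is $(n+3)$-connected by \eqref{eq:the_other_canonical_map_needed}, the bottom map is $(n+5)$-connected by \eqref{eq:commuting_Z_past_holim_delta}, and the right vertical map is $(n+3)$-connected by the previous paragraph; all three are therefore isomorphisms on $\pi_k$, so commutativity forces the top map to be an isomorphism on $\pi_k$ as well. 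As $k$ is arbitrary and all objects are $1$-connected, the top map is a weak equivalence, which is \eqref{eq:key_technical_result}. For the right-hand map, I would note that $\tilde{\ZZ}\,C(Y)^n=\tilde{\ZZ}U\K^nY=\K^{n+1}Y$, so $\tilde{\ZZ}\,C(Y)$ is exactly the cobar resolution of the $\K$-coalgebra $Y$ determined by the comonad $\K$; its coface and codegeneracy operators are the comultiplication, the $\K$-coaction, and the counit of $\K$. The coaction $Y\rightarrow\K Y$ coaugments this cosimplicial object and the counit $\K\rightarrow\id$ supplies an extra codegeneracy, i.e. a cosimplicial contraction; the left cofinality argument of \cite[3.16]{Dror_Dwyer_long_homology} then identifies $\holim\nolimits_\Delta\tilde{\ZZ}\,C(Y)$ with the coaugmentation $Y$, giving the weak equivalence $\holim\nolimits_\Delta\tilde{\ZZ}\,C(Y)\xrightarrow{\wequiv}Y$.

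The hard part will be the middle step: the connectivity of the tower $\{\holim\nolimits_{\Delta^{\leq n}}\tilde{\ZZ}\,C(Y)\}_n$ is not handed to us directly and must be synthesized from the three input theorems, so the delicate point is to keep the bookkeeping of the estimates tight---together with the $1$-connectedness hypotheses that legitimize the Hurewicz and $\pi_k$ arguments---so that the connectivities genuinely tend to infinity with $n$ and the passage to the limit is valid.
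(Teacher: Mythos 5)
Your proposal is correct and follows essentially the same route as the paper: both halves are handled identically (letting $n\to\infty$ against the truncated towers using Theorems \ref{thm:estimating_connectivity_of_maps_in_tower_C_of_Y}, \ref{thm:connectivities_for_map_into_n_th_stage}, and \ref{thm:connectivities_for_map_that_commutes_Z_into_inside_of_holim} for the left-hand map, and the extra codegeneracy plus the \cite[3.16]{Dror_Dwyer_long_homology} cofinality argument for the right-hand map). The only difference is that you explicitly extract the connectivity of the tower $\{\holim_{\Delta^{\leq n}}\tilde{\ZZ}\,C(Y)\}_n$ and run a $\pi_k$ diagram chase, a detail the paper's terser proof leaves implicit; your bookkeeping there is sound.
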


\begin{proof}
Consider the left-hand map. It suffices to verify that the connectivities of the natural maps \eqref{eq:the_other_canonical_map_needed} and \eqref{eq:commuting_Z_past_holim_delta}
 are strictly increasing with $n$, and Theorems \ref{thm:connectivities_for_map_into_n_th_stage} and \ref{thm:connectivities_for_map_that_commutes_Z_into_inside_of_holim} complete the proof. Consider the case of the right-hand map. Since the cosimplicial cobar construction $\Cobar(\K,\K,Y)$, which is isomorphic to $\tilde{\ZZ}\, C(Y)$, has extra codegeneracy maps $s^{-1}$ (\cite[6.2]{Dwyer_Miller_Neisendorfer}), it follows from the cofinality argument in  \cite[3.16]{Dror_Dwyer_long_homology}, together with the fact that
$
  Y\xrightarrow{d^0}\Cobar(\K,\K,Y)\xrightarrow{s^{-1}} Y
$
factors the identity, that the right-hand map in \eqref{eq:ZC_commutes_with_desired_holim} is a weak equivalence.
\end{proof}

\begin{proof}[Proof of Theorem \ref{MainTheorem}]
We want to verify that the natural map
$
  \tilde{\ZZ}\holim\nolimits_\Delta C(Y)\xrightarrow{\wequiv} Y
$
is a weak equivalence; since this is the composite \eqref{eq:ZC_commutes_with_desired_holim}, Theorem \ref{thm:ZC_commutes_with_desired_holim} completes the proof; this reduction argument can be thought of as a homotopical Barr-Beck comonadicity theorem (see \cite[2.20]{Arone_Ching_classification}).
\end{proof}

\section{Homotopical analysis}
\label{sec:proofs}

The purpose of this section is to prove Theorems \ref{thm:estimating_connectivity_of_maps_in_tower_C_of_Y} and \ref{thm:connectivities_for_map_that_commutes_Z_into_inside_of_holim}. 

\subsection{Integral chains}
\label{sec:integral_chains}

The functor $\tilde{\ZZ}$ is equipped with a coaction over the comonad $\K$ that appears in the Bousfield-Kan $\ZZ$-completion construction; this observation, which remains true for any adjunction provided that the indicated limits below exist, forms the basis of the homotopical descent ideas appearing in \cite{Arone_Ching_classification, Francis_Gaitsgory, Hess}.

Consider any pointed space $X$ and recall that $\tilde{\ZZ}(X):=\ZZ(X)/\ZZ(*)$. Then there is an adjunction
\begin{align}
\label{eq:homology_adjunction}
\xymatrix{
  \Space_*\ar@<0.5ex>[r]^-{\tilde{\ZZ}} &
  \sAb\ar@<0.5ex>[l]^-{U}
}
\end{align}
with left adjoint on top and $U$ the forgetful functor. Associated to the adjunction in \eqref{eq:homology_adjunction} is the monad $U\tilde{\ZZ}$ on pointed spaces $\Space_*$ and the comonad $\K:=\tilde{\ZZ}U$ on simplicial abelian groups $\sAb$ of the form
\begin{align}
\label{eq:TQ_homology_spectrum_functor_natural_transformations}
  \id\xrightarrow{\eta} U\tilde{\ZZ}&\quad\text{(unit)},\quad\quad\quad
  &\id\xleftarrow{\varepsilon}\K& \quad\text{(counit)}, \\
  \notag
  U\tilde{\ZZ}U\tilde{\ZZ}\rarrow U\tilde{\ZZ}&\quad\text{(multiplication)},\quad\quad\quad
  &\K\K\xleftarrow{m}\K& \quad\text{(comultiplication)}.
\end{align}
and it follows formally that there is a factorization of adjunctions of the form
\begin{align}
\label{eq:factorization_of_adjunctions_Z_homology}
\xymatrix{
  \Space_*\ar@<0.5ex>[r]^-{\tilde{\ZZ}} &
  \coAlgK\ar@<0.5ex>[r]\ar@<0.5ex>[l]^-{\lim_\Delta C} &
  \sAb\ar@<0.5ex>[l]^-{\K} 
}
\end{align}
with left adjoints on top and $\coAlgK\rarrow\sAb$ the forgetful functor; here, $\coAlgK$ denotes the category of $\K$-coalgebras and their morphisms (just after Remark \eqref{rem:initial_object_K_coalgebras} and \cite[1.2]{Arone_Ching_classification}). In particular, the integral homology complex $\tilde{\ZZ}X$ is naturally equipped with a $\K$-coalgebra structure. While we defer the definition of $C$ to the next subsection (Definition \ref{defn:cobar_construction}), to understand the comparison in \eqref{eq:factorization_of_adjunctions_Z_homology} between $\Space_*$ and $\coAlgK$ it suffices to know that $\lim_\Delta C(Y)$ is naturally isomorphic to an equalizer of the form
\begin{align*}
  \lim\nolimits_\Delta C(Y)\Iso
  \lim\Bigl(
  \xymatrix{
    UY\ar@<0.5ex>[r]^-{d^0}\ar@<-0.5ex>[r]_-{d^1} &
    U\K Y
  }
  \Bigr)
\end{align*}
where $d^0=m\id$, $d^1=\id m$, $\function{m}{U}{U\K=U\tilde{\ZZ}U}$ denotes the $\K$-coaction map on $U$ (defined by $m:=\eta\id$), and $\function{m}{Y}{\K Y}$ denotes the $\K$-coaction map on $Y$; this is because of the following property of cosimplicial objects (see Definition \ref{defn:cobar_construction}).

\begin{prop}
\label{prop:lim_of_cosimplicial_object}
Let $\M$ be a category with all small limits. If $A\in\M^\Delta$ (resp. $B\in\M^{\Delta_\res}$), then its limit is naturally isomorphic to an equalizer of the form
\begin{align*}
  \lim\nolimits_{\Delta}A\Iso
  \lim\bigl(
  \xymatrix@1{
    A^0\ar@<0.5ex>[r]^-{d^0}\ar@<-0.5ex>[r]_-{d^1} &
    A^1
  }
  \bigr)\quad\quad
  \Bigl(
  \text{resp.}\quad
  \lim\nolimits_{\Delta_\res}B\Iso
  \lim\bigl(
  \xymatrix@1{
    B^0\ar@<0.5ex>[r]^-{d^0}\ar@<-0.5ex>[r]_-{d^1} &
    B^1
  }
  \bigr)
  \Bigr)
\end{align*}
in $\M$, with $d^0$ and $d^1$ the indicated coface maps of $A$ (resp. $B$).
\end{prop}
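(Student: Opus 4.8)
The plan is to prove this as a standard fact about limits of cosimplicial objects, relying only on the universal property of the limit over the cosimplicial indexing category $\Delta$ (resp. its restricted variant $\Delta_\res$). The key observation is that the coface maps $d^0,d^1\colon A^0\rightrightarrows A^1$ already detect every compatibility condition that a cone over the full diagram $A$ must satisfy, so that the limit over all of $\Delta$ collapses to the equalizer of these two maps in cosimplicial degrees $0$ and $1$.

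First I would recall that a cone over $A\in\M^\Delta$ with vertex $Z$ is a collection of maps $f^n\colon Z\to A^n$ that commute with all the cosimplicial structure maps (coface $d^i$ and codegeneracy $s^j$). Given any such cone, the degree-zero component $f^0\colon Z\to A^0$ satisfies $d^0 f^0=d^1 f^0$ (since $f$ commutes with $d^0$ and $d^1$), so $f^0$ factors uniquely through the equalizer $E:=\lim(A^0\rightrightarrows A^1)$. The main content is the converse and the uniqueness: I would show that the single map $f^0\colon Z\to A^0$ equalizing $d^0,d^1$ determines an entire cone, and does so uniquely. To recover $f^n$ in higher degrees, note that every object $[n]\in\Delta$ receives the (unique) map $[0]\to[n]$ selecting the final vertex (equivalently, $[n]$ is built from $[0]$ by iterated cofaces), and in fact $[0]$ is an initial-up-to-the-relevant-structure object in the sense that any structure map of $A$ is generated by $d^0,d^1$ together with the codegeneracies. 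The codegeneracy $s^0\colon A^1\to A^0$ provides a retraction $s^0 d^0=\id=s^0 d^1$, which is the key algebraic input: it lets me define $f^n$ from $f^0$ by composing with a chosen sequence of cofaces, and then verify using the cosimplicial identities that any two such choices agree and that the resulting $f^n$ are compatible with all structure maps. This is where I would invoke the ``uniformity of faces'' forced by the cosimplicial identities.

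The cleanest way to organize this is to recognize that $[0]\in\Delta$ (together with the coface/codegeneracy relations down to degree $1$) is \emph{left cofinal} (initial) for the purpose of computing the limit: the truncation $\Delta^{\leq 1}\hookrightarrow\Delta$ induces an isomorphism on limits because the degeneracies exhibit each $A^n$ as a retract compatible with the cofaces, so no further conditions beyond the degree-$0$/degree-$1$ equalizer condition arise. Concretely, I would either (i) exhibit an explicit inverse to the canonical map $\lim_\Delta A\to E$, sending an equalizing map to its induced cone, or (ii) cite/verify that the restriction functor $\M^\Delta\to\M^{\Delta^{\leq 1}}$ followed by taking the equalizer computes the limit, using that the relevant comma categories are connected. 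The restricted case $B\in\M^{\Delta_\res}$ is formally identical, and I would remark that the same argument applies verbatim, since $\Delta_\res$ has the same low-degree coface structure and the relevant retraction $s^0 d^0=\id=s^0 d^1$ persists; one only checks that dropping the extra (co)degeneracies does not disturb the degree-$\leq 1$ equalizer presentation.

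The step I expect to be the main obstacle—though ``obstacle'' overstates it for such a standard lemma—is the uniqueness/well-definedness verification in higher cosimplicial degrees: showing that the map $f^n$ reconstructed from $f^0$ is independent of the chosen factorization through cofaces, and that the reconstructed family is genuinely a cone (compatible with every $d^i$ and $s^j$, not merely with $d^0,d^1$). This is a bookkeeping argument using the simplicial/cosimplicial identities, and the honest way to present it is to reduce to the statement that $[0]$ is an initial object of an appropriate category whose limit agrees with the equalizer. Since the paper only needs the equalizer presentation to unwind $\lim_\Delta C(Y)$ in the factorization \eqref{eq:factorization_of_adjunctions_Z_homology}, I would keep this verification brief and cite the standard cosimplicial-object identities rather than grinding through each relation.
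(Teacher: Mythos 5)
Your proposal is correct and is essentially the paper's own proof, which is the one-line observation that the claim ``follows easily by using the cosimplicial identities to verify the universal property of limits''; you simply spell out the standard verification (a cone is determined by its degree-zero component, which must equalize $d^0$ and $d^1$, and conversely any such map generates a unique cone via composites of cofaces, with independence of the chosen composite checked by the coface identities). One caveat: your claim that the retraction $s^0d^0=\id=s^0d^1$ is ``the key algebraic input'' and that it ``persists'' in the restricted case is off --- $\Delta_\res$ contains only the strictly monotone maps, so it has no codegeneracies at all and no such retraction exists there. Fortunately this is a red herring rather than a gap: the retraction is not needed, since the uniqueness and cone-compatibility checks only require that any two vertex maps $[0]\rarrow[n]$ become equal after composing with a map that equalizes $d^0$ and $d^1$ (adjacent vertex inclusions factor through a common map $[1]\rarrow[n]$ precomposed with the two cofaces $[0]\rightrightarrows[1]$), and this uses only cofaces, hence works verbatim in $\Delta_\res$.
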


\begin{proof}
This follows easily by using the cosimplicial identities \cite[I.1]{Goerss_Jardine} to verify the universal property of limits.
\end{proof}

\subsection{The cosimplicial cobar construction}

It will be useful to interpret the cosimplicial integral homology resolution of $X$ in terms of the following cosimplicial cobar construction involving the comonad $\K$ on $\sAb$. First note that associated to the adjunction $(\tilde{\ZZ},U)$ is a right $\K$-coaction $\function{m}{U}{U\K}$ on $U$ (defined by $m:=\eta\id$) and a left $\K$-coaction (or $\K$-coalgebra structure) $\function{m}{\tilde{\ZZ}X}{\K \tilde{\ZZ}X}$ on $\tilde{\ZZ}X$ (defined by $m=\id\eta\id$), for any $X\in\Space_*$.

\begin{defn}
\label{defn:cobar_construction}
Let $Y$ be an object in $\coAlgK$. The \emph{cosimplicial cobar construction} (or two-sided cosimplicial cobar construction) $C(Y):=\Cobar(U,\K,Y)$ looks like (showing only the coface maps)
\begin{align}
\label{eq:cobar_construction}
&\xymatrix@1{
  C(Y): \quad\quad
  UY\ar@<0.5ex>[r]^-{d^0}\ar@<-0.5ex>[r]_-{d^1} &
  U\K Y
  \ar@<1.0ex>[r]\ar[r]\ar@<-1.0ex>[r] &
  U\K\K Y
  \cdots
}
\end{align}
and is defined objectwise by $C(Y)^n:=U\K^n Y$ with the obvious coface and codegeneracy maps; see, for instance, the face and degeneracy maps in the simplicial bar constructions described in \cite[A.1]{Gugenheim_May} or \cite[Section 7]{May_classifying_spaces}, and dualize. For instance, in \eqref{eq:cobar_construction} the indicated coface maps are defined by $d^0:=m\id$ and $d^1:=\id m$.
\end{defn}

\subsection{Connectivity estimates, cofinality, and cubical diagrams}
\label{sec:cubical_diagrams_homotopical_analysis}

The purpose of this section is to prove Theorems \ref{thm:estimating_connectivity_of_maps_in_tower_C_of_Y} and \ref{thm:connectivities_for_map_that_commutes_Z_into_inside_of_holim} that provide the estimates we need. The following definitions appear in \cite[Section 1, 1.12]{Goodwillie_calculus_2} in the context of spaces.
\begin{defn}[Indexing categories for cubical diagrams]
Let $W$ be a finite set and $\M$ a category.
\begin{itemize}
\item Denote by $\powerset(W)$ the poset of all subsets of $W$, ordered by inclusion $\subset$ of sets. We will often regard $\powerset(W)$ as the category associated to this partial order in the usual way; the objects are the elements of $\powerset(W)$, and there is a morphism $U\rarrow V$ if and only if $U\subset V$.
\item Denote by $\powerset_0(W)\subset\powerset(W)$ the poset of all nonempty subsets of $W$; it is the full subcategory of $\powerset(W)$ containing all objects except the initial object $\emptyset$.
\item A \emph{$W$-cube} $\capX$ in $\M$ is a $\powerset(W)$-shaped diagram $\capX$ in $\M$; in other words, a functor $\function{\capX}{\powerset(W)}{\M}$.
\end{itemize}
\end{defn}

\begin{rem}
If $\capX$ is a $W$-cube in $\M$ where $|W|=n$, we will sometimes refer to $\capX$ simply as an \emph{$n$-cube} in $\M$. In particular, a $0$-cube is an object in $\M$ and a $1$-cube is a morphism in $\M$.
\end{rem}

\begin{defn}[Faces of cubical diagrams]
Let $W$ be a finite set and $\M$ a category. Let $\capX$ be a $W$-cube in $\M$ and consider any subsets $U\subset V\subset W$. Denote by $\partial_U^V\capX$ the $(V-U)$-cube defined objectwise by
\begin{align*}
  T\mapsto(\partial_U^V\capX)_T:=\capX_{T\cup U},\quad\quad T\subset V-U.
\end{align*}
In other words, $\partial_U^V\capX$ is the $(V-U)$-cube formed by all maps in $\capX$ between $\capX_U$ and $\capX_V$. We say that $\partial_U^V\capX$ is a \emph{face} of $\capX$ of \emph{dimension} $|V-U|$.
\end{defn}

The following definitions appear in \cite[Section 2]{Dundas_relative_K_theory}, \cite[A.8.0.1, A.8.3.1]{Dundas_Goodwillie_McCarthy}.

\begin{defn}[Subcubes of cubical diagrams]
Let $T,W$ be finite sets such that $|T|\leq|W|$ and $\M$ a category. Let $\capX$ be a $W$-cube in $\M$. A \emph{$T$-subcube of $\capX$} is a $T$-cube resulting from the precomposite of $\capX$ along an injection $\function{\xi}{\powerset(T)}{\powerset(W)}$ satisfying that if $U,V\subset T$, then $\xi(U\cap V)=\xi(U)\cap\xi(V)$ and $\xi(U\cup V)=\xi(U)\cup\xi(V)$. If $|T|=d$, we will often refer to a $T$-subcube of $\capX$ simply as a \emph{$d$-subcube} of $\capX$.
\end{defn}

\begin{rem}
In general, not all subcubes of $\capX$ are faces of $\capX$. For instance, consider any 2-cube $\capX$. There are exactly four 1-dimensional faces of $\capX$, and exactly five 1-subcubes of $\capX$.
\end{rem}

\begin{defn}
Let $\function{f}{\NN}{\NN}$ be a function and $W$ a finite set. A $W$-cube $\capX$ is \emph{$f$-cartesian} (resp. \emph{$f$-cocartesian}) if each $d$-subcube of $\capX$ is $f(d)$-cartesian (resp. $f(d)$-cocartesian); here, $\NN$ denotes the non-negative integers.
\end{defn}

The following is Dundas' higher Hurewicz theorem for spaces and is proved in \cite[2.6]{Dundas_relative_K_theory}; see also the subsequent elaboration in \cite[A.8.3]{Dundas_Goodwillie_McCarthy}. It provides an alternate proof, together with strong estimates for the uniform cartesian-ness of cubes built by iterations of the Hurewicz map, of the result in Bousfield-Kan \cite[III.5.4]{Bousfield_Kan} that the $\ZZ$-completion map $X\rarrow X^\wedge_\ZZ$ is a weak equivalence for any $1$-connected space $X$. These uniform cartesian-ness estimates, resulting from Proposition \ref{prop:higher_hurewicz_theorem}, will play a key role in our homotopical analysis of the derived counit map below.

\begin{prop}[Higher Hurewicz theorem]
\label{prop:higher_hurewicz_theorem}
Let $k\geq 2$. If $\capX$ is an $(\id+k)$-cartesian cube of pointed spaces, then so is $\capX\rarrow U\tilde{\ZZ}\capX$.
\end{prop}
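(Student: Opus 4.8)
The plan is to regard the natural transformation $\capX\rarrow U\tilde{\ZZ}\capX$ as a single $(n{+}1)$-cube $\capY$, where $n=|W|$, and to check the $(\id+k)$-cartesian condition one subcube at a time. Recall that $\capY$ is $(\id+k)$-cartesian exactly when every $d$-subcube of $\capY$ is $(d+k)$-cartesian, i.e.\ has $(d+k-1)$-connected total homotopy fiber. The $d$-subcubes of $\capY$ come in three flavors: those lying in the source face $\capX$, those lying in the target face $U\tilde{\ZZ}\capX$, and the new \emph{mapping subcubes} $\bigl(\capZ\rarrow U\tilde{\ZZ}\capZ\bigr)$ determined by a $(d{-}1)$-subcube $\capZ$ of $\capX$. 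The source subcubes are $(d+k)$-cartesian by hypothesis, so the work is to handle the remaining two flavors.

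The base (single-object) case is the classical Hurewicz theorem: if $A$ is $c$-connected with $c\geq 1$, then since $\pi_* U\tilde{\ZZ}A\iso\tilde{H}_*(A;\ZZ)$ the relative Hurewicz theorem places the first nontrivial homotopy of the homotopy fiber of $A\rarrow U\tilde{\ZZ}A$ in degree $c+1$, so that map is $(c+2)$-connected. This ``$+2$'' is the numerical engine of the whole argument, and it is exactly why the hypothesis $k\geq 2$ is imposed: it keeps every space in sight simply connected (so Hurewicz applies without fundamental-group trouble) and provides the slack that lets the output cube stay $(\id+k)$-cartesian with the \emph{same} constant $k$.

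For the target subcubes I would prove the auxiliary claim that $U\tilde{\ZZ}$ preserves the $(\id+k)$-cartesian property, by chaining four passages: (i) by Goodwillie's dual Blakers--Massey theorem \cite[2.6]{Goodwillie_calculus_2}, a sufficiently cartesian cube of simply connected spaces is correspondingly cocartesian; (ii) $\tilde{\ZZ}$ is a left adjoint and left Quillen, hence preserves homotopy colimits and sends cocartesian cubes in $\Space_*$ to cocartesian cubes in $\sAb$; (iii) $\sAb$ is stable in the connective range, so there a $d$-cube has total cofiber equivalent to the $d$-fold shift of its total fiber, whence highly cocartesian and highly cartesian coincide; and (iv) $U$ is right Quillen, hence preserves homotopy cartesian cubes. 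For the mapping subcubes I would instead use the iterated-fiber identity
\begin{align*}
  \operatorname{tofib}\bigl(\capZ\rarrow U\tilde{\ZZ}\capZ\bigr)
  \wequiv
  \hofib\bigl(\operatorname{tofib}\capZ\rarrow\operatorname{tofib}U\tilde{\ZZ}\capZ\bigr),
\end{align*}
observing that $\capZ$ being $((d{-}1)+k)$-cartesian makes $\operatorname{tofib}\capZ$ a $(d+k-2)$-connected space; the single-object estimate applied to $\operatorname{tofib}\capZ$ (after comparing $\operatorname{tofib}U\tilde{\ZZ}\capZ$ with $U\tilde{\ZZ}\operatorname{tofib}\capZ$, whose defect is again controlled by passages (i)--(iv)) then makes this map $(d+k)$-connected and its total fiber $(d+k-1)$-connected, exactly the bound required.

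The main obstacle is the uniform connectivity bookkeeping. One must run the Blakers--Massey and stability passages for all subcube dimensions $d$ at once and verify that the various $d$-dependent shifts --- the gain in cocartesian-ness from Blakers--Massey, the $d$-fold shift relating total fiber and total cofiber in $\sAb$, and the defect in commuting $U\tilde{\ZZ}$ past total fibers --- cancel so cleanly that the single universal constant $k$ survives intact. That this cancellation works out, powered by the ``$+2$'' of the classical Hurewicz theorem under the standing hypothesis $k\geq 2$, is precisely the content of Dundas' higher Hurewicz theorem \cite[2.6]{Dundas_relative_K_theory}.
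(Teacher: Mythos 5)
The paper does not actually prove this proposition: it is imported wholesale as Dundas' higher Hurewicz theorem, with the proof delegated to \cite[2.6]{Dundas_relative_K_theory} and \cite[A.8.3]{Dundas_Goodwillie_McCarthy}, so there is no internal argument to compare yours against. That said, your sketch is a reasonable reconstruction of the shape of Dundas' argument, and your target-subcube passage is essentially complete and comes out numerically exact: an $(\id+k)$-cartesian $d$-subcube is $(2d+k-1)$-cocartesian by the dual Blakers--Massey theorem (in Goodwillie's formula the minimum $|W|-1+\min_\lambda\sum_V(|V|+k)$ is achieved by the trivial partition), $\tilde{\ZZ}$ preserves this cocartesian-ness because it preserves homotopy colimits and connectivity of maps between $1$-connected spaces, and the shift in $\sAb$ (the analogue of \cite[3.10]{Ching_Harper} used in Theorem \ref{thm:cocartesian_and_cartesian_estimates}) returns exactly $(d+k)$-cartesian, which $U$ preserves.

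Two genuine gaps remain if this is to stand as a proof rather than a pointer to Dundas. First, your taxonomy of $d$-subcubes is incomplete. In the Dundas--Goodwillie--McCarthy definition (reproduced in Section \ref{sec:cubical_diagrams_homotopical_analysis}), a subcube is specified by a base set together with disjoint nonempty direction sets, and the direction containing the new ``Hurewicz'' coordinate may also contain coordinates of $W$; the resulting diagonal subcubes are composites $\capZ_0\rarrow\capZ_1\rarrow U\tilde{\ZZ}\capZ_1$ and are neither source subcubes, target subcubes, nor mapping subcubes in your sense. (The paper's own remark that a $2$-cube has four $1$-dimensional faces but five $1$-subcubes is pointing at exactly this.) These can be absorbed using the composition lemma for cartesian cubes, but they must be addressed, since $(\id+k)$-cartesian is a condition on all subcubes. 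Second, and more seriously, the clause ``after comparing the total fiber of $U\tilde{\ZZ}\capZ$ with $U\tilde{\ZZ}$ of the total fiber of $\capZ$, whose defect is again controlled by passages (i)--(iv)'' buries the actual content of the theorem: $\tilde{\ZZ}$ commutes with total cofibers, not total fibers, and estimating the connectivity of that interchange map is precisely the kind of delicate bookkeeping the theorem exists to package (it is the analogue, for a general cube, of what Theorem \ref{thm:connectivities_for_map_that_commutes_Z_into_inside_of_holim} does for the cobar construction). Since the conclusion retains the same constant $k$, the bounds are tight and cannot be waved through; as written, your argument ultimately defers this verification to \cite[2.6]{Dundas_relative_K_theory} --- which is what the paper does too, only explicitly.
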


\begin{defn}
\label{defn:coface_cube}
Let $n\geq -1$ and suppose $Z$ is a cosimplicial pointed space coaugmented by $\function{d^0}{Z^{-1}}{Z^0}$. The \emph{coface} $(n+1)$-cube, denoted $\capX_{n+1}$, associated to the coaugmented cosimplicial pointed space $Z^{-1}\rarrow Z$, is the canonical $(n+1)$-cube built from the coface relations \cite[I.1]{Goerss_Jardine}
$d^j d^i=d^i d^{j-1}$, if $i<j$, associated to the coface maps of the $n$-truncation
\begin{align*}
\xymatrix@1{
  Z^{-1}\ar[r]^-{d^0} &
  Z^0\ar@<-0.5ex>[r]_-{d^1}\ar@<0.5ex>[r]^-{d^0} &
  Z^1\ \cdots\ Z^n
}
\end{align*}
of $Z^{-1}\rarrow Z$; in particular, $\capX_0$ is the pointed space (or 0-cube) $Z^{-1}$.
\end{defn}

\begin{rem}
For instance, the coface 1-cube $\capX_1$ has the left-hand form
\begin{align*}
\xymatrix@1{
  Z^{-1}\ar[r]^-{d^0} & Z^0
}\quad\quad
\xymatrix@1{
  Z^{-1}\ar[r]^-{d^0}\ar[d]^-{d^0} & Z^0\ar[d]^-{d^0}\\
  Z^0\ar[r]^-{d^1} & Z^1
}
\end{align*}
and the coface 2-cube $\capX_2$ has the indicated right-hand form.
\end{rem}

The following proposition, proved in \cite[XI.9.2]{Bousfield_Kan}, allows one to compute homotopy limits of $\DD$-shaped diagrams in terms of homotopy limits over $\DD'$-shaped diagrams, provided that the comparison map $\DD'\rightarrow\DD$ is left cofinal \cite[XI.9.1]{Bousfield_Kan}.

\begin{prop}
\label{prop:cofinality_induces_weak_equivalence}
Let $\function{\alpha}{\DD'}{\DD}$ be a functor between small categories. If $Z$ is a $\DD$-shaped diagram in pointed spaces and $\alpha$ is left cofinal, then the induced map $\holim\nolimits_{\DD'} X\xleftarrow{\wequiv}\holim\nolimits_{\DD} X$ is a weak equivalence.
\end{prop}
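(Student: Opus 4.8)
The plan is to reduce the statement to a homotopy-invariance property of weighted homotopy limits, in which the weight records the nerves of comma categories, so that left cofinality appears exactly as the condition guaranteeing a weak equivalence of weights. First I would recall the weighted-limit (equivalently $\Tot$ of the cosimplicial replacement, Definition \ref{defn:totalization_and_restricted_totalization} and Proposition \ref{prop:comparing_holim_with_Tot}) description of the homotopy limit: for an objectwise fibrant $\DD$-shaped diagram $X$ of pointed spaces one has a natural isomorphism
\[
  \holim\nolimits_\DD X\Iso\catend_{d\in\DD}X_d^{N(\DD\downarrow d)},
\]
and likewise $\holim_{\DD'}\alpha^* X\Iso\catend_{d'\in\DD'}X_{\alpha d'}^{N(\DD'\downarrow d')}$, where $\DD\downarrow d$ denotes the over (comma) category. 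The functor $\alpha$ induces the comparison map in the proposition, and by the standard Fubini/coend manipulation $\{W',\alpha^* X\}\Iso\{\Lan_\alpha W',X\}$ for weighted limits, this comparison is obtained by applying the weighted-limit functor $\{-,X\}=\catend_d X_d^{(-)}$ to a canonical map of $\DD$-indexed weights
\[
  \Lan_\alpha N(\DD'\downarrow -)\longrightarrow N(\DD\downarrow -).
\]

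Next I would note that both weights are objectwise cofibrant simplicial sets, that for each $d$ the target $N(\DD\downarrow d)$ is contractible since $\DD\downarrow d$ has the terminal object $\id_d$, and that $X\mapsto\catend_d X_d^{(-)}$ carries objectwise weak equivalences between such cofibrant weights to weak equivalences when $X$ is objectwise fibrant. It therefore suffices to show the displayed map of weights is an objectwise weak equivalence; since the target is contractible, this reduces to showing $(\Lan_\alpha N(\DD'\downarrow-))_d\simeq *$ for every $d\in\DD$.

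The key computation is the identification of the (derived) left Kan extension of the cofibrant replacement $N(\DD'\downarrow-)$ of the terminal weight with the nerve of the comma category, namely $(\Lan_\alpha N(\DD'\downarrow-))_d\simeq N(\alpha\downarrow d)$, together with the observation that under this identification the map of weights is induced by the evident functor $\alpha\downarrow d\rightarrow\DD\downarrow d$. Granting this, contractibility of $(\Lan_\alpha N(\DD'\downarrow-))_d$ is precisely the hypothesis that $\alpha$ is left cofinal, i.e. $N(\alpha\downarrow d)\simeq *$ for all $d\in\DD$ \cite[XI.9.1]{Bousfield_Kan}, which finishes the argument.

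The main obstacle I expect is this last identification: correctly matching the homotopy left Kan extension of the weight with $N(\alpha\downarrow d)$ and keeping careful track of variances and of the cofibrancy/fibrancy needed to pass from an objectwise equivalence of weights to an equivalence of weighted limits. This bookkeeping is exactly what is carried out in Bousfield-Kan \cite[XI.9]{Bousfield_Kan}; a more self-contained alternative, avoiding Kan extensions altogether, is to argue directly on cosimplicial replacements, comparing $\Tot$ of the replacement over $\DD$ with that over $\DD'$ and using a skeletal-filtration (or homotopy spectral sequence) argument in which the contractibility of each $N(\alpha\downarrow d)$ forces the comparison to induce isomorphisms on all homotopy groups.
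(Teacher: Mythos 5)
Your argument is essentially the proof that the paper itself does not write out: Proposition \ref{prop:cofinality_induces_weak_equivalence} is quoted directly from Bousfield--Kan \cite[XI.9.2]{Bousfield_Kan}, and your weighted-limit/Kan-extension reduction --- identifying $(\Lan_\alpha N(\DD'\downarrow -))_d\iso N(\alpha\downarrow d)$ (this is in fact an isomorphism of simplicial sets, not just a weak equivalence) and invoking contractibility of $N(\alpha\downarrow d)$ --- is a faithful modern rephrasing of the argument in that reference. One point of precision: ``objectwise cofibrant'' is not the hypothesis that makes the weighted limit $\catend_{d}X_d^{(-)}$ homotopy invariant in the weight, since every simplicial set is cofibrant; what is actually needed is that the weights be cofibrant as diagrams (Bousfield--Kan's \emph{free} diagrams, i.e.\ projectively cofibrant objects of $\sSet^{\DD}$), which does hold here because $N(\DD\downarrow -)$ and $N(\DD'\downarrow -)$ are free and $\Lan_\alpha$ preserves freeness. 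With that substitution your outline closes up into a complete proof.
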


\begin{defn}
Let $n\geq 0$. Denote by $\Delta^{\leq n}\subset\Delta$ the full subcategory of objects $[m]$ such that $m\leq n$ (see Proposition \ref{prop:holim_tower_with_truncated_delta_filtration}).
\end{defn}

The functor in the following definition, appearing in \cite[6.3]{Sinha_cosimplicial_models}, plays a key role in the homotopical analysis of this paper; see also \cite[9.4.1]{Munson_Volic_book_project}.

\begin{defn}
\label{defn:sinha_comparison_map}
Define the totally ordered sets $[n]:=\{0,1,\dotsc,n\}$ for each $n\geq 0$, and given their natural ordering. The functor $\capP_0([n])\rightarrow\Delta^{\leq n}$ is defined objectwise by $U\mapsto [|U|-1]$, and which sends $U\subset V$ in $\capP_0([n])$ to the composite 
\begin{align*}
  [|U|-1]\Iso U\subset V\Iso[|V|-1]
\end{align*}
where the indicated isomorphisms are the unique isomorphisms of totally ordered sets.
\end{defn}

\begin{rem}
For instance, the punctured 1-cube $\capP_0([0])\rightarrow\Delta^{\leq 0}$ has the left-hand form and the punctured 2-cube $\capP_0([1])\rightarrow\Delta^{\leq 1}$ has the indicated
\begin{align*}
\xymatrix@1{
  & \{0\}
}\quad\quad
\xymatrix@1{
  & \{1\}\ar[d]^-{d^0}\\
  \{0\}\ar[r]^-{d^1} & \{0,1\}
}
\end{align*}
right-hand form.
\end{rem}

The following proposition, proved in \cite[6.7]{Sinha_cosimplicial_models}, explains the homotopical significance of the punctured $n$-cube appearing in Definition \ref{defn:sinha_comparison_map}; see also  \cite[6.1--6.4]{Carlsson} and \cite[18.7]{Dugger_homotopy_colimits}.

\begin{prop}
\label{prop:left_cofinality_truncated_delta}
Let $n\geq 0$. The functor
$
  \capP_0([n])\rightarrow\Delta^{\leq n}
$
is left cofinal; hence, if $Z$ is a cosimplicial pointed space, then the induced map of the form
$
  \holim\nolimits_{\capP_0([n])}Z\xleftarrow{\wequiv}
  \holim\nolimits_{\Delta^{\leq n}}Z
$
is a weak equivalence (Proposition \ref{prop:cofinality_induces_weak_equivalence}).
\end{prop}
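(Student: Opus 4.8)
The plan is to deduce the displayed weak equivalence directly from Proposition \ref{prop:cofinality_induces_weak_equivalence}, so that the entire content is to verify that the comparison functor $\function{\alpha}{\capP_0([n])}{\Delta^{\leq n}}$ of Definition \ref{defn:sinha_comparison_map} is left cofinal in the sense of \cite[XI.9.1]{Bousfield_Kan}. Unwinding that definition, left cofinality is the assertion that for each object $[m]$ of $\Delta^{\leq n}$ the comma category $(\alpha\downarrow[m])$ has weakly contractible nerve; here $(\alpha\downarrow[m])$ has objects the pairs $(U,\phi)$ consisting of a nonempty $U\subset[n]$ together with a morphism $\function{\phi}{[|U|-1]}{[m]}$ in $\Delta$, and a morphism $(U,\phi)\rarrow(U',\phi')$ is an inclusion $U\subset U'$ compatible with $\phi,\phi'$. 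Once this contractibility is in hand, applying Proposition \ref{prop:cofinality_induces_weak_equivalence} to the restriction of $Z$ along $\alpha$ yields the asserted weak equivalence $\holim_{\capP_0([n])}Z\xleftarrow{\wequiv}\holim_{\Delta^{\leq n}}Z$ and finishes the proof.

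First I would reorganize this comma category into a combinatorially transparent poset. Using the order isomorphism $[|U|-1]\iso U$, an object of $(\alpha\downarrow[m])$ is the same datum as an order-preserving partial function $\function{\phi}{U}{[m]}$ with nonempty domain $U\subset[n]$; a short computation with the definition of the morphisms shows that a morphism exists precisely when $U\subset U'$ and $\phi'$ restricts to $\phi$ on $U$. In particular every hom-set has at most one element, so $(\alpha\downarrow[m])$ is the poset $Q$ of nonempty order-preserving partial functions $[n]\rightharpoonup[m]$, ordered by extension. The key step is then to prove that the nerve of $Q$ is contractible whenever $m\leq n$.

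To carry this out I would build an acyclic matching on the Hasse diagram of $Q$ and invoke discrete Morse theory for posets, arranging for a single critical cell $(\{0\},0\mapsto 0)$ so that the order complex collapses to a point. The matching is governed by a lexicographic element-insertion rule processed through the positions $0,1,2,\dotsc$: an object with $0\notin U$ is paired with the cover obtained by adjoining $0$ at the minimal value $0$; an object with $0\in U$ and $\phi(0)=0$ is paired with the corresponding deletion (leaving $(\{0\},0\mapsto 0)$ critical, since deletion there would empty the domain); and the remaining objects, those with $\phi(0)>0$, are matched recursively by the same rule applied at the next available position, the positive values of $[m]$ being absorbed by the successive positions of $[n]$. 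Proving that this matching is acyclic and strands no further critical cells is the main obstacle, and it is exactly here that the hypothesis $m\leq n$ must enter: one needs at least as many positions as values for the recursion to terminate cleanly. The bound is sharp, so no purely formal contraction can exist; indeed the very same bookkeeping exhibits a surviving loop as soon as $m=n+1$, where the nerve of $Q$ is non-contractible (a circle already for $n=1$). This sharpness is the reason I expect the combinatorial analysis of the matching, rather than the reduction to cofinality, to be the technical heart of the argument.
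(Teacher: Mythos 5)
Your setup is correct: the reduction to Proposition \ref{prop:cofinality_induces_weak_equivalence}, the identification of the comma category $(\alpha\downarrow[m])$ with the poset $Q$ of nonempty order-preserving partial functions $[n]\rightharpoonup[m]$ ordered by extension, and the observation that contractibility genuinely uses $m\leq n$ (failing already for $n=1$, $m=2$, where the nerve is a circle) are all accurate and show you understand what is really being claimed. Nevertheless the proposal has a genuine gap, and you have located it yourself: the one assertion that carries all of the content---that the nerve of $Q$ is contractible for every $m\leq n$---is precisely the step you defer as ``the main obstacle.'' Specifying an insertion/deletion rule and predicting that its acyclicity will use $m\leq n$ is a plan, not an argument; nothing in the proposal rules out a directed cycle in the matching or additional critical cells. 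For comparison, the paper does not prove this proposition at all: it quotes it from \cite[6.7]{Sinha_cosimplicial_models}, with alternative references to \cite[6.1--6.4]{Carlsson} and \cite[18.7]{Dugger_homotopy_colimits}. If you mean to replace that citation with an argument, the argument must be finished.

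There is also a mismatch between the tool you invoke and the object it must be applied to. Discrete Morse theory in the form ``acyclic matching with a single critical cell implies collapsibility'' concerns a regular CW complex and a matching on its cells. What has to be shown contractible here is the nerve of $Q$, whose cells are the \emph{chains} of $Q$, not its elements; an acyclic matching on the Hasse diagram of $Q$ is not literally a Morse matching on the order complex, and the passage from one to the other requires the poset version of the theory (an induced matching on chains, or an equivalent decomposition into closure operators), which must be stated and its hypotheses checked---acyclicity being exactly the delicate point in the regime where your own sharpness example shows the conclusion can fail. The shortest complete route is to cite \cite[6.7]{Sinha_cosimplicial_models} as the paper does; failing that, either carry out the acyclicity verification and the critical-cell count in full, or replace the matching by an explicit chain of monotone self-maps of $Q$ comparable to the identity (each of which induces a deformation retraction of the nerve), making visible where the hypothesis $m\leq n$ is used to terminate at the single element $(\{0\},0\mapsto 0)$.
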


\begin{rem}[Higher Hurewicz implies the Bousfield-Kan result $X\wequiv X^\wedge_\ZZ$]
\label{rem:Dundas_proof}
Assume that $X$ is a 1-connected pointed space. For notational simplicity we often drop the forgetful functor $U$, appearing in Proposition \ref{prop:higher_hurewicz_theorem}, from our arguments. The $n$-truncation of the Bousfield-Kan cosimplicial resolution \eqref{eq:homology_resolution_introduction} has the form
\begin{align*}
\xymatrix@1{
  X\ar[r]^-{d^0} &
  \tilde{\ZZ}X\ar@<-0.5ex>[r]_-{d^1}\ar@<0.5ex>[r]^-{d^0} &
  \tilde{\ZZ}\tilde{\ZZ}X\ \cdots\ \tilde{\ZZ}^{n+1} X
}
\end{align*}
Dundas \cite[Section 2]{Dundas_relative_K_theory} points out that just as \eqref{eq:homology_resolution_introduction} is built by iterating the Hurewicz map, the associated coface $(n+1)$-cube $\capX_{n+1}$ can be built by applying the Hurewicz map to the coface $n$-cube $\capX_n$. In more detail: the coface $(n+1)$-cube $\capX_{n+1}$ can be described as the $(n+1)$-cube $\capX_n\rightarrow\tilde{\ZZ}\capX_n$ for each $n\geq 0$. 

To verify that the $\ZZ$-completion map $X\rightarrow X^\wedge_\ZZ$ is a weak equivalence, it suffices to verify that the map
\begin{align}
\label{eq:map_into_n_th_stage_Dundas'_proof_remark}
  X\rightarrow \holim\nolimits_{\Delta^{\leq n}}C(\tilde{\ZZ}X)
\end{align}
into the n-th stage of the homotopy limit tower has connectivity strictly increasing with $n$. The map \eqref{eq:map_into_n_th_stage_Dundas'_proof_remark} can be built, up to weak equivalence, from the coface $(n+1)$-cube $\capX_{n+1}$. In more detail: the map \eqref{eq:map_into_n_th_stage_Dundas'_proof_remark} can be described as the map $X\rightarrow\holim_{\powerset_0([n])}\capX_{n+1}$;  the connectivity of this map is the same as the cartesian-ness of the coface (n+1)-cube $\capX_{n+1}$, but this is the same as the cartesian-ness of the (n+1)-cube $\capX_n\rightarrow\tilde{\ZZ}\capX_n$, for each $n\geq 0$.

 Since $X$ is a 1-connected pointed space, the map $X\rightarrow *$ is 2-connected, and hence the 0-cube $\capX_0$ is $(\id+2)$-cartesian. Hence by Proposition \ref{prop:higher_hurewicz_theorem} we know that $\capX_1$ is $(\id+2)$-cartesian, and therefore another application of Proposition \ref{prop:higher_hurewicz_theorem} gives that $\capX_2$ is $(\id+2)$-cartesian, and so forth. In a similar way, the coface $(n+1)$ cube $\capX_{n+1}$ is $(\id+2)$-cartesian for each $n\geq 0$; hence Dundas' higher Hurewicz theorem has provided us with strong estimates for the uniform cartesian-ness of cubes built by iterations of the Hurewicz map. In particular, we know that the $(n+1)$-cube $\capX_{n+1}$ is $(n+1+2)$-cartesian for each $n\geq 0$, which means that the map \eqref{eq:map_into_n_th_stage_Dundas'_proof_remark} is $(n+3)$-connected for each $n\geq 0$. Therefore, these connectivity estimates imply that 
\begin{align*}
  X\rightarrow\holim\nolimits_n\holim\nolimits_{\Delta^{\leq n}}C(\tilde{\ZZ}X)\wequiv\holim\nolimits_\Delta C(\tilde{\ZZ}X)\wequiv X^\wedge_\ZZ
\end{align*}
is a weak equivalence; since this is the $\ZZ$-completion map $X\rightarrow X^\wedge_\ZZ$, we have recovered the Bousfield-Kan result. The uniform cartesian-ness estimates for $\capX_{n+1}$ are stronger than the statement that the coaugmentation $X\wequiv X^\wedge_\ZZ$ is a weak equivalence. For instance, such uniform cartesian-ness estimates imply uniform cocartesian-ness estimates, and vice-versa \cite[2.4]{Dundas_relative_K_theory}; the strength of these uniform cartesian-ness estimates become important in \cite{Dundas_relative_K_theory, Dundas_Goodwillie_McCarthy} and for the main results of this paper.
\end{rem}

The following is proved in \cite[3.4.8]{Munson_Volic_book_project}.

\begin{prop}
\label{prop:retraction_two_cube_argument}
Consider any $2$-cube $\capX$ of the form
\begin{align*}
\xymatrix@1{
  X\ar@{=}[d]\ar[r]^-{d} & Y\ar[d]^-{s}\\
  X\ar@{=}[r] & X
}
\end{align*}
in $\Space_*$; in other words, suppose $s$ is a retraction of $d$. There are natural weak equivalences $\hofib(d)\wequiv\Omega\hofib(s)$; here, the notation $d$ and $s$ is intended to suggest to the reader ``coface map'' and ``codegeneracy map'', respectively.
\end{prop}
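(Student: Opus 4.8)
The plan is to identify the commutativity of the square $\capX$ with the retraction identity $sd=\id$, and then to extract the desired equivalence by computing the total homotopy fiber of $\capX$ in the two possible orders (rows-first versus columns-first). Recall that for any commutative square of pointed spaces the total homotopy fiber agrees with the iterated homotopy fiber and is independent of the order in which the two directions are taken; this is the key structural input, and it is available in the cubical framework of \cite{Munson_Volic_book_project}.

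First I would compute the total fiber by rows. The top row is $d$, with homotopy fiber $\hofib(d)$, while the bottom row is $\id_X$, whose homotopy fiber is contractible. Hence the total fiber is the homotopy fiber of the induced map $\hofib(d)\rightarrow\hofib(\id_X)\wequiv *$, which is naturally weakly equivalent to $\hofib(d)$ itself.

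Next I would compute the total fiber by columns. The left column is $\id_X$, with contractible homotopy fiber, while the right column is $s$, with homotopy fiber $\hofib(s)$. Hence the total fiber is the homotopy fiber of the induced map $\hofib(\id_X)\rightarrow\hofib(s)$. The point here is that, because $sd=\id$, this induced map is (up to natural weak equivalence) the inclusion $*\rightarrow\hofib(s)$ of the basepoint; its homotopy fiber is therefore $\Omega\hofib(s)$, using the standard identification of the homotopy fiber of a basepoint inclusion with the based loop space. Comparing the two computations yields the natural weak equivalences $\hofib(d)\wequiv\Omega\hofib(s)$.

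The main obstacle is bookkeeping rather than conceptual: one must check that the induced map $\hofib(\id_X)\rightarrow\hofib(s)$ really is the basepoint inclusion, and it is precisely here that the retraction hypothesis $sd=\id$ is used, since it supplies the compatibility $s(d(x))=x$ needed for the map of fibers to be well defined and to land on the basepoint. Tracking basepoints carefully, together with the naturality of the iterated-fiber identifications, delivers the naturality asserted in the statement.
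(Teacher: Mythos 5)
Your proof is correct. The paper does not argue this proposition itself---it simply cites \cite[3.4.8]{Munson_Volic_book_project}---and your argument, computing the total homotopy fiber of the square rows-first (giving $\hofib(d)$) versus columns-first (giving $\Omega\hofib(s)$, using $sd=\id$ to identify the induced map $\hofib(\id_X)\rightarrow\hofib(s)$ with the basepoint inclusion), is precisely the standard proof of the cited result, so it matches the paper's intended route.
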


\begin{defn}
Let $Z$ be a cosimplicial pointed space and $n\geq 0$. The \emph{codegeneracy} $n$-cube, denoted $\capY_n$, associated to $Z$, is the canonical $n$-cube built from the codegeneracy relations \cite[I.1]{Goerss_Jardine}
$s^j s^i=s^i s^{j+1}$, if $i\leq j$, associated to the codegeneracy maps of the $n$-truncation 
\begin{align*}
\xymatrix@1{
  Z^0 &
  Z^1
  \ar[l]_-{s^0} &
  Z^2\ar@<-0.5ex>[l]_-{s^0}\ar@<0.5ex>[l]^-{s^1}
  \ \cdots\ Z^n
}
\end{align*}
of $Z$; in particular, $\capY_0$ is the pointed space (or $0$-cube) $Z^0$. 
\end{defn}

\begin{rem}
For instance, the codegeneracy 1-cube $\capY_1$ has the left-hand form
\begin{align*}
\xymatrix@1{
  Z^1\ar[r]^-{s^0} & Z^0
}\quad\quad
\xymatrix@1{
  Z^{2}\ar[r]^-{s^1}\ar[d]^-{s^0} & Z^1\ar[d]^-{s^0}\\
  Z^1\ar[r]^-{s^0} & Z^0
}
\end{align*}
and the codegeneracy 2-cube $\capY_2$ has the indicated right-hand form.
\end{rem}

\begin{rem}
It is important to note that the total homotopy fiber of an $n$-cube of pointed spaces is weakly equivalent to its iterated homotopy fiber \cite[Section 1]{Goodwillie_calculus_2}, and in this paper we use the terms interchangeably; we use the convention that the iterated homotopy fiber of a $0$-cube $\capY$ (or object $\capY_\emptyset$) is the homotopy fiber of the unique map $\capY_\emptyset\rarrow *$ and hence is weakly equivalent to $\capY_\emptyset$; see also \cite[5.5.4]{Munson_Volic_book_project}.
\end{rem}

\begin{rem}
The homotopical significance of the codegeneracy $n$-cubes $\capY_n$ associated to a cosimplicial pointed space $Z$ can be understood as follows: the total homotopy fiber of $\capY_n$ is the derived version of the fiber of the natural map $Z^n\rightarrow M^{n-1}Z$; here, $M^{n-1}Z$ denotes the indicated matching space of $Z$ (\cite[X.4.5]{Bousfield_Kan} and \cite[VII.4.9]{Goerss_Jardine}); i.e., if $Z$ is Reedy fibrant, then there are natural weak equivalences \cite[X.6.3]{Bousfield_Kan}
$(\iter\hofib)\capY_n\wequiv \fiber(Z^n\rightarrow M^{n-1}Z)$, $n\geq 0$.
\end{rem}

The following calculation is proved in \cite[X.6.3]{Bousfield_Kan} for the $\Tot$ tower of a Reedy fibrant cosimplicial pointed space; compare with \cite[5.5.7]{Munson_Volic_book_project}.

\begin{prop}
\label{prop:iterated_homotopy_fibers_calculation}
Let $Z$ be a cosimplicial pointed space and $n\geq 0$. There are natural zigzags of weak equivalences
\begin{align*}
  \hofib(\holim\nolimits_{\Delta^{\leq n}}Z\rarrow\holim\nolimits_{\Delta^{\leq n-1}}Z)
  \wequiv\Omega^n(\iter\hofib)\capY_n
\end{align*}
where $\capY_n$ denotes the codegeneracy $n$-cube associated to $Z$.
\end{prop}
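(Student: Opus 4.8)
The plan is to reduce to the case where $Z$ is Reedy fibrant, and then to splice together the classical Bousfield-Kan calculation for the $\Tot$ tower with the identification, recorded in the remark just above, of the total homotopy fiber of the codegeneracy cube as a derived fiber into the matching space.

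First I would observe that both sides are homotopy invariant in $Z$, so that a functorial Reedy fibrant replacement $Z\xrightarrow{\wequiv}Z'$ supplies a natural zigzag reducing us to the Reedy fibrant case. On the left-hand side this is because $\holim_{\Delta^{\leq n}}$ and $\holim_{\Delta^{\leq n-1}}$ are the derived homotopy limits, hence preserve objectwise weak equivalences, and $\hofib$ preserves weak equivalences of maps. On the right-hand side it is because the codegeneracy $n$-cube $\capY_n$ depends only on the levels $Z^0,\dotsc,Z^n$, so an objectwise weak equivalence induces a weak equivalence $\capY_n\xrightarrow{\wequiv}\capY'_n$ of $n$-cubes, and the iterated homotopy fiber carries objectwise weak equivalences of cubes to weak equivalences.

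Next, assuming $Z$ Reedy fibrant, I would identify the holim tower with the $\Tot$ tower via Proposition \ref{prop:comparing_holim_with_Tot}, naturally in the tower maps, so that
\[
  \hofib\bigl(\holim\nolimits_{\Delta^{\leq n}}Z\rarrow\holim\nolimits_{\Delta^{\leq n-1}}Z\bigr)
  \wequiv
  \fiber\bigl(\Tot_n Z\rarrow\Tot_{n-1}Z\bigr),
\]
the right-hand side being an honest fiber since Reedy fibrancy makes $\Tot_nZ\rarrow\Tot_{n-1}Z$ a fibration. Bousfield-Kan \cite[X.6.3]{Bousfield_Kan} then computes this fiber as $\Omega^n\fiber(Z^n\rarrow M^{n-1}Z)$, where $M^{n-1}Z$ is the matching space. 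Finally, applying $\Omega^n$ to the equivalence $(\iter\hofib)\capY_n\wequiv\fiber(Z^n\rarrow M^{n-1}Z)$ from the preceding remark (a consequence of \cite[X.4.5, X.6.3]{Bousfield_Kan}) and composing the three natural zigzags yields the claim; the degenerate case $n=0$, where $\Delta^{\leq-1}=\emptyset$ and both sides reduce to $Z^0$, is immediate.

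The hard part will not be any single step---each is either homotopy invariance or a citation---but rather the bookkeeping of naturality: one must check that the comparison $\holim_{\Delta^{\leq n}}Z\wequiv\Tot_nZ$ of Proposition \ref{prop:comparing_holim_with_Tot} is compatible with the restriction maps down the two towers, so that it induces a well-defined comparison on homotopy fibers and the fiber sequences of the two towers genuinely line up. This compatibility is what licenses transporting the Bousfield-Kan $\Tot$-tower computation into the homotopy-limit language used throughout this paper.
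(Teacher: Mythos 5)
Your proposal is correct and matches the paper's approach: the paper gives no separate proof but simply cites Bousfield--Kan [X.6.3] for the $\Tot$ tower of a Reedy fibrant cosimplicial pointed space, together with the identification $(\iter\hofib)\capY_n\wequiv\fiber(Z^n\rarrow M^{n-1}Z)$ recorded in the preceding remark and the comparison $\Tot_n\wequiv\holim_{\Delta^{\leq n}}$ of Proposition \ref{prop:comparing_holim_with_Tot}, exactly the three ingredients you splice together. Your explicit attention to the Reedy fibrant replacement and to the compatibility of the $\Tot$/$\holim$ comparison with the tower maps is just the bookkeeping the paper leaves implicit.
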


\begin{defn}
\label{defn:the_wide_tilde_construction}
Let $Z$ be an objectwise fibrant cosimplicial pointed space and $n\geq 0$. Denote by $\function{Z}{\capP_0([n])}{\Space_*}$ the corresponding composite diagram
$
  \capP_0([n])\rightarrow\Delta^{\leq n}
  \subset\Delta
  \xrightarrow{Z}\Space_*
$
(Definition \ref{defn:sinha_comparison_map}). The \emph{associated $\infty$-cartesian $(n+1)$-cube built from $Z$}, denoted $\function{\widetilde{Z}}{\capP([n])}{\Space_*}$, is defined objectwise by
\begin{align*}
  \widetilde{Z}_V :=
  \left\{
    \begin{array}{rl}
    \holim_{T\neq\emptyset}Z_T,&\text{for $V=\emptyset$,}\\
    Z_V,&\text{for $V\neq\emptyset$}.
    \end{array}
  \right.
\end{align*}
In other words, the $\widetilde{Z}$ construction is simply ``filling in'' the punctured $(n+1)$-cube $\function{Z}{\capP_0([n])}{\Space_*}$ with value $\widetilde{Z}_\emptyset=\holim\nolimits_{\capP_0([n])}Z\wequiv\holim\nolimits_{\Delta^{\leq n}}Z$ at the initial vertex to turn it into an $(n+1)$-cube that is $\infty$-cartesian. 
\end{defn}

\begin{rem}
For instance, in the case $n=1$ the $\widetilde{Z}$ construction produces the $\infty$-cartesian 2-cube of the form
\begin{align*}
\xymatrix@1{
  \holim\nolimits_{\Delta^{\leq 1}}Z\ar[r]\ar[d] & Z^0\ar[d]^-{d^0}\\
  Z^0\ar[r]^-{d^1} & Z^1
}
\end{align*}
\end{rem}

Let $Y$ be a 1-connected cofibrant $\K$-coalgebra. We want to estimate the connectivity of the map
\begin{align}
  \tilde{\ZZ}\holim\nolimits_{\Delta^{\leq n}}C(Y)
  \rightarrow
  \holim\nolimits_{\Delta^{\leq n}}\tilde{\ZZ}C(Y)
\end{align}
for each $n\geq 0$. In the case $n=0$ this is the identity map $\tilde{\ZZ}UY\rightarrow\tilde{\ZZ}UY$ and hence a weak equivalence. Consider the case $n=1$. Let's build $\widetilde{C(Y)}$, the $\infty$-cartesian 2-cube of the left-hand form
\begin{align*}
\xymatrix@1{
  \holim\nolimits_{\Delta^{\leq 1}}C(Y)\ar[r]\ar[d] & 
  C(Y)^0\ar[d]^-{d^0}\\
  C(Y)^0\ar[r]^-{d^1} & C(Y)^1
}\quad\quad
\xymatrix@1{
  \tilde{\ZZ}\holim\nolimits_{\Delta^{\leq 1}}C(Y)\ar[r]\ar[d] & 
  \tilde{\ZZ}C(Y)^0\ar[d]^-{\tilde{\ZZ} d^0}\\
  \tilde{\ZZ}C(Y)^0\ar[r]^-{\tilde{\ZZ} d^1} & 
  \tilde{\ZZ}C(Y)^1
}
\end{align*}
Applying $\tilde{\ZZ}$ gives the 2-cube $\tilde{\ZZ}\widetilde{C(Y)}$ of the indicated right-hand form. The connectivity of the map
\begin{align}
\label{eq:comparison_map_n_equals_1}
  \tilde{\ZZ}\holim\nolimits_{\Delta^{\leq 1}}C(Y)
  \rightarrow
  \holim\nolimits_{\Delta^{\leq 1}}\tilde{\ZZ}C(Y)
\end{align}
is the same as the cartesian-ness of the 2-cube $\tilde{\ZZ}\widetilde{C(Y)}$. The idea is to (i) estimate the cocartesian-ness of the 2-cube $\widetilde{C(Y)}$, (ii) applying $\tilde{\ZZ}$ will play nicely with the cocartesian-ness estimate, (iii) $\tilde{\ZZ}\widetilde{C(Y)}$ is a 2-cube in simplicial abelian groups, hence by \cite[3.10]{Ching_Harper} it is $k$-cocartesian if and only if it is $(k-2+1)$-cartesian. To carry this out, the idea is to use Goodwillie's higher dual Blakers-Massey theorem \cite[2.6]{Goodwillie_calculus_2}, which we recall here for the convenience of the reader, to estimate the cocartesian-ness of the 2-cube $\widetilde{C(Y)}$.

\begin{prop}[Higher dual Blakers-Massey theorem]
\label{prop:higher_dual_blakers_massey}
Let $W$ be a nonempty finite set. Let $\capX$ be a $W$-cube of pointed spaces. Suppose that
\begin{itemize}
\item[(i)] for each nonempty subset $V\subset W$, the $V$-cube $\partial_{W-V}^W\capX$ (formed by all maps in $\capX$ between $\capX_{W-V}$ and $\capX_W$) is $k_V$-cartesian,
\item[(ii)] $k_{U}\leq k_V$ for each $U\subset V$.
\end{itemize}
Then $\capX$ is $k$-cocartesian, where $k$ is the minimum of $|W|-1+\sum_{V\in\lambda}k_V$ over all partitions $\lambda$ of $W$ by nonempty sets.
\end{prop}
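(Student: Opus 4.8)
This is Goodwillie's higher dual Blakers--Massey theorem; a complete proof is \cite[2.6]{Goodwillie_calculus_2}, and the strategy I would follow to reprove it runs by induction on $n=|W|$, with the genuine homotopy-theoretic depth concentrated in the case $n=2$. After a functorial objectwise fibrant replacement I may assume all homotopy (co)limits in sight agree with the corresponding strict constructions. The base case $n=1$ is immediate: a $1$-cube is a single map, for which being $k$-cartesian, $k$-cocartesian, and $k$-connected all coincide, and the lone partition of a one-element set contributes $|W|-1+k_W=k_W$. For $n=2$ the statement unwinds to the classical dual Blakers--Massey theorem for a square: if the two edges into the terminal vertex carry cartesian-ness $k_{\{1\}}$ and $k_{\{2\}}$ and the square is $k_{\{1,2\}}$-cartesian, then the square is $\min(1+k_{\{1,2\}},\,1+k_{\{1\}}+k_{\{2\}})$-cocartesian, which is exactly the partition minimum for $W=\{1,2\}$. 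This square case is the classical homotopy-excision (Freudenthal) input, and I would take it as given.

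For the inductive step I would fix an element $w\in W$, set $W'=W\setminus\{w\}$, and regard $\capX$ as a $1$-cube of $(n-1)$-cubes $\capA\to\capB$, where $\capA_T=\capX_T$ and $\capB_T=\capX_{T\cup\{w\}}$ for $T\subset W'$. The engine is the standard identification of the total homotopy cofiber of $\capX$ with the cofiber of the induced map on total cofibers of these two faces, together with the fact that $\capX$ is $k$-cocartesian precisely when its total cofiber is $k$-connected. Since the cofiber of a $k$-connected map is again $k$-connected, it therefore suffices to show that the induced map
\begin{align*}
  \mathrm{tcofib}\,\capA\rightarrow\mathrm{tcofib}\,\capB
\end{align*}
is $k$-connected for the asserted value of $k$.

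To produce this estimate I would feed the hypotheses into the inductive hypothesis. The faces $\partial^{W'}_{W'-V'}\capB$ are precisely the hypothesized faces $\partial^{W}_{W-V'}\capX$ (those with top vertex $\capX_W$), so $\capB$ satisfies the inductive hypotheses with the same constants $k_{V'}$ for $V'\subset W'$, and is $k_\capB$-cocartesian for the corresponding $(n-1)$-dimensional partition minimum. The monotonicity hypothesis (ii), that $k_U\leq k_V$ whenever $U\subset V$, is what lets me transport estimates down to the faces of $\capA$ (whose top vertex is $\capX_{W'}$ rather than $\capX_W$) and keeps the recursion self-consistent. Combining the resulting $(n-1)$-cube estimates with the $n=2$ case---applied to the $1$-cube-of-cubes $\capA\to\capB$---then yields the desired connectivity of $\mathrm{tcofib}\,\capA\to\mathrm{tcofib}\,\capB$.

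The part I expect to be the main obstacle is the bookkeeping that extracts \emph{exactly} the minimum of $|W|-1+\sum_{V\in\lambda}k_V$ over all partitions $\lambda$ of $W$, rather than a weaker bound. Partitions arise because at each stage the induction makes a binary choice for the split-off direction $w$: either isolate it as its own block, paying the Blakers--Massey increment that accounts for the ``$|W|-1$'' offset, or absorb it into an existing block, replacing two summands $k_{V'}$ and $k_{V''}$ by the single larger term $k_{V'\cup V''}$ (here hypothesis (ii) guarantees the merge is an honest estimate). Optimizing over all such recursive choices reproduces precisely the stated partition minimum; verifying the sharpness of this combined estimate, and matching the many inductive partitions against the single minimum in the statement, is where the real care lies---while, as noted, the genuine homotopical content sits entirely in the square case $n=2$.
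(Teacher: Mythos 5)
First, note that the paper contains no proof of this proposition at all: it is quoted verbatim from Goodwillie and used as a black box via the citation to \cite[2.6]{Goodwillie_calculus_2}, so your opening move of deferring to that reference is exactly what the paper does. The issue is therefore only with your supplementary sketch, which, read as an actual argument, has two genuine gaps.

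The first gap is the claim that $\capX$ is $k$-cocartesian precisely when its total homotopy cofiber is $k$-connected. For pointed spaces only the forward implication holds (a $k$-connected map has $k$-connected cofiber); the converse fails without simple-connectivity hypotheses (an acyclic space $X$ with nontrivial perfect fundamental group gives a $1$-cube $X\rarrow *$ whose total cofiber $\Sigma X$ is contractible, yet the cube is only $1$-cocartesian). Your reduction to the connectivity of $\mathrm{tcofib}\,\capA\rarrow\mathrm{tcofib}\,\capB$ uses exactly the false direction, which is why Goodwillie phrases cocartesian-ness as a connectivity statement about the map $\hocolim_{V\neq W}\capX\rarrow\capX_W$ and never passes through total cofibers. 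The second, more structural, gap is in the inductive step: hypothesis (i) only provides cartesian-ness estimates for faces $\partial^W_{W-V}\capX$ whose terminal vertex is $\capX_W$, so it says nothing about the subcube $\capA$ (terminal vertex $\capX_{W'}$), and hypothesis (ii) is merely a monotonicity condition on the integers $k_V$---it cannot ``transport estimates down to the faces of $\capA$'' as you assert. Correspondingly, Goodwillie's actual proof is not a one-variable induction bottoming out in the square case; it is a more delicate induction interleaved with the primal higher Blakers--Massey theorem and cube-surgery arguments that manufacture the missing control over faces away from the terminal vertex. None of this affects the paper, which uses the statement only as an imported external input, but as written your sketch would not assemble into a proof.
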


Taking $W=\{0,1\}$ since $\widetilde{C(Y)}$ is a 2-cube, the input to Proposition \ref{prop:higher_dual_blakers_massey} requires that we estimate the cartesian-ness of each of the faces
\begin{align*}
  \partial_{W-V}^W\widetilde{C(Y)},\quad\quad
  \emptyset\neq V\subset W
\end{align*}
Hence we need to estimate the cartesian-ness of the two 1-faces indicated in the left-hand diagram
\begin{align*}
\xymatrix@1{
   & 
  C(Y)^0\ar[d]^-{d^0}\\
  C(Y)^0\ar[r]^-{d^1} & C(Y)^1
}\quad\quad
\xymatrix@1{
   & 
  UY\ar[d]^-{d^0}\\
  UY\ar[r]^-{d^1} & U\tilde{\ZZ}UY
}
\end{align*}
which have the form in the indicated right-hand diagram. We know that $d^0=m\id$ is the Hurewicz map on $UY$, and since $UY$ is 1-connected we know that $d^0$ is a 3-connected map and hence a 3-cartesian 1-cube. What about the map $d^1=\id m$ involving the $K$-coaction map on $Y$? The key observation is that the cosimplicial identities force a certain ``uniformity of faces'' behavior as follows. Consider the commutative diagrams (or 2-cubes) of the form
\begin{align}
\label{eq:sequence_paths_1_cubes}
\xymatrix@1{
  UY\ar@{=}[d]\ar[r]^-{d^0} & 
  U\tilde{\ZZ}UY\ar[d]^-{s^0}\\
  UY\ar@{=}[r] & UY
}\quad\quad
\xymatrix@1{
  UY\ar@{=}[d]\ar[r]^-{d^1} & 
  U\tilde{\ZZ}UY\ar[d]^-{s^0}\\
  UY\ar@{=}[r] & UY
}
\end{align}
coming from the cosimplicial identities \cite[I.1]{Goerss_Jardine}. 
Then by Proposition \ref{prop:retraction_two_cube_argument} we know 
\begin{align*}
  \hofib(d^0)\wequiv\Omega\hofib(s^0),\quad\quad
  \hofib(d^1)\wequiv\Omega\hofib(s^0),
\end{align*}
and hence $\hofib(d^0)\wequiv\hofib(d^1)$. Therefore, by this uniformity we know that $d^1$ is also a 3-connected map and hence a 3-cartesian 1-cube. Since we know that the 2-face of $\widetilde{C(Y)}$ is $\infty$-cartesian (by construction), it follows from Proposition \ref{prop:higher_dual_blakers_massey} that $\widetilde{C(Y)}$ is $k$-cocartesian, where $k-1$ is the minimum of
\begin{align*}
  k_{\{0,1\}} = \infty,\quad\quad
  k_{\{0\}}+k_{\{1\}}&= 3+3 = 6.
\end{align*}
Hence $k=7$ and we have calculated that $\widetilde{C(Y)}$ is a 7-cocartesian 2-cube in $\Space_*$, hence $\tilde{\ZZ}\widetilde{C(Y)}$ is a 7-cocartesian 2-cube in $\sAb$, and therefore by above it is a $(7-1)$-cartesian 2-cube in $\sAb$. The upshot is that $\tilde{\ZZ}\widetilde{C(Y)}$ is 6-cartesian and hence we have calculated that the map \eqref{eq:comparison_map_n_equals_1} is 6-connected.

Consider the case $n=2$. Let's build the $\infty$-cartesian 3-cube $\widetilde{C(Y)}$. Applying $\tilde{\ZZ}$ gives the 3-cube $\tilde{\ZZ}\widetilde{C(Y)}$ and the connectivity of the map
\begin{align}
\label{eq:comparison_map_n_equals_2}
  \tilde{\ZZ}\holim\nolimits_{\Delta^{\leq 2}}C(Y)
  \rightarrow
  \holim\nolimits_{\Delta^{\leq 2}}\tilde{\ZZ}C(Y)
\end{align}
is the same as the cartesian-ness of $\tilde{\ZZ}\widetilde{C(Y)}$. The idea is to (i) estimate the cocartesian-ness of the 3-cube $\widetilde{C(Y)}$, (ii) applying $\tilde{\ZZ}$ will play nicely with the cocartesian-ness estimate, (iii) $\tilde{\ZZ}\widetilde{C(Y)}$ is a 3-cube in simplicial abelian groups, hence by \cite[3.10]{Ching_Harper} it is $k$-cocartesian if and only if it is $(k-3+1)$-cartesian. To carry this out, the idea is to use Proposition \ref{prop:higher_dual_blakers_massey} to estimate the cocartesian-ness of the 3-cube $\widetilde{C(Y)}$.

Taking $W=\{0,1,2\}$ since $\widetilde{C(Y)}$ is a 3-cube, the input to Proposition \ref{prop:higher_dual_blakers_massey} requires that we estimate the cartesian-ness of each of the faces
$\partial_{W-V}^W\widetilde{C(Y)}$,
$\emptyset\neq V\subset W$.
Hence we need to estimate the cartesian-ness of three 2-faces and three 1-faces (or maps). The key observation is that exactly one of these 2-faces does not involve the $\K$-coaction map on $Y$; furthermore, this particular 2-face is precisely the coface 2-cube $\capX_2$ in Remark \ref{rem:Dundas_proof} when taking $X=UY$. Since $UY$ is 1-connected, we know by Dundas' higher Hurewicz theorem and Remark \ref{rem:Dundas_proof} that $\capX_2$ is an $(\id+2)$-cartesian 2-cube; in particular, $\capX_2$ is 4-cartesian. What about the other two 2-faces involving the $K$-coaction map on $Y$? The key observation is that the cosimplicial identities force a certain ``uniformity of faces'' behavior as follows. For ease of notational purposes, let $Z=C(Y)$ and consider the commutative diagrams of the form
\begin{align}
\label{eq:sequence_paths_of_squares}
\xymatrix@1{
  Z^0\ar[d]_-{d^0}\ar[r]^-{d^0}\ar@{}[dr]|(0.43){(F_1)} &
  Z^1\ar[d]^-{d^1}\\
  Z^1\ar[d]_-{s^0}\ar[r]^-{d^0} &
  Z^2\ar[d]^-{s^1}\ar[r]^-{s^0} &
  Z^1\ar[d]^-{s^0}\\
  Z^0\ar[r]^-{d^0} &
  Z^1\ar[r]^-{s^0} &
  Z^0
}\quad\quad
\xymatrix@1{
  Z^0\ar[d]_-{d^1}\ar[r]^-{d^1}\ar@{}[dr]|(0.43){(F_2)} &
  Z^1\ar[d]^-{d^2}\ar[r]^-{s^0} &
  Z^0\ar[d]^-{d^1}\\
  Z^1\ar[r]^-{d^1} &
  Z^2\ar[d]^-{s^1}\ar[r]^-{s^0} &
  Z^1\ar[d]^-{s^0}\\
& Z^1\ar[r]^-{s^0} &
  Z^0
}\quad\quad
\xymatrix@1{
  Z^0\ar[d]_-{d^1}\ar[r]^-{d^0}\ar@{}[dr]|(0.43){(F_3)} &
  Z^1\ar[d]^-{d^2}\ar[r]^-{s^0} &
  Z^0\ar[d]^-{d^1}\\
  Z^1\ar[r]^-{d^0} &
  Z^2\ar[d]^-{s^1}\ar[r]^-{s^0} &
  Z^1\ar[d]^-{s^0}\\
   &
  Z^1\ar[r]^-{s^0} &
  Z^0
}
\end{align}
coming from the cosimplicial identities \cite[I.1]{Goerss_Jardine}. The upper left-hand square $(F_1)$ is the coface 2-cube $\capX_2$ which is $(\id+2)$-cartesian by above. The upper left-hand squares $(F_2)$ and $(F_3)$ are the remaining two 2-faces that we need cartesian-ness estimates for. The key observation is that the lower right-hand squares are each a copy of the codegeneracy $2$-cube $\capY_2$ associated to $Z$, and that furthermore, the indicated vertical and horizontal composites are the identity maps by the cosimplicial identities \cite[I.1]{Goerss_Jardine}; then by repeated application of Proposition \ref{prop:retraction_two_cube_argument} to these composites in \eqref{eq:sequence_paths_of_squares}, we know that
\begin{align*}
  (\iter\hofib)(F_1)&\wequiv
  \Omega^2(\iter\hofib)\capY_2\\
  (\iter\hofib)(F_2)&\wequiv
  \Omega^2(\iter\hofib)\capY_2\\
  (\iter\hofib)(F_3)&\wequiv
  \Omega^2(\iter\hofib)\capY_2
\end{align*}
and hence $(\iter\hofib)(F_1)\wequiv(\iter\hofib)(F_2)\wequiv(\iter\hofib)(F_3)$. Therefore, by this uniformity we know that $(F_2)$ and $(F_3)$ are also 4-cartesian 2-cubes. Similarly, we know that the three 1-faces (or maps) with codomain $Z^2$ are 3-cartesian. Since we know that the 3-face of $\widetilde{C(Y)}$ is $\infty$-cartesian (by construction), it follows from Proposition \ref{prop:higher_dual_blakers_massey} that $\widetilde{C(Y)}$ is $k$-cocartesian, where $k-2$ is the minimum of
\begin{align*}
  k_{\{0,1,2\}} = \infty,\quad
  k_{\{0\}}+k_{\{1,2\}} = 3+4=7,\quad
  k_{\{0\}}+k_{\{1\}}+k_{\{2\}}&= 3+3+3 = 9.
\end{align*}
Note that by the ``uniformity of faces'' behavior, we get nothing new from the other partitions of $W$; this is why we have not written them out here. Hence $k=9$ and we have calculated that $\widetilde{C(Y)}$ is a 9-cocartesian 3-cube in $\Space_*$, hence $\tilde{\ZZ}\widetilde{C(Y)}$ is a 9-cocartesian 3-cube in $\sAb$, and therefore by above it is a $(9-2)$-cartesian 3-cube in $\sAb$. The upshot is that $\tilde{\ZZ}\widetilde{C(Y)}$ is 7-cartesian and hence we have calculated that the map \eqref{eq:comparison_map_n_equals_2} is 7-connected. 

\begin{rem}
\label{rem:more_information_in_arguemnt_above}
There is more information in the argument above. Since the 2-face $(F_1)$ is 4-cartesian, its total homotopy fiber is 3-connected, hence (Proposition \ref{prop:iterated_homotopy_fibers_calculation})
\begin{align*}
  \hofib(\holim\nolimits_{\Delta^{\leq 2}}Z\rarrow\holim\nolimits_{\Delta^{\leq 1}}Z)
  \wequiv\Omega^2(\iter\hofib)\capY_2
\end{align*}
is 3-connected and therefore the map $\holim_{\Delta^{\leq 2}}Z\rarrow\holim_{\Delta^{\leq 1}}Z$ is 4-connected. Also, since $\Omega^2(\iter\hofib)\capY_2$ is 3-connected, then $(\iter\hofib)\capY_2$ is 5-connected. 
\end{rem}

And so forth, in a similar way, for each $n\geq 3$, the connectivity of the map
\begin{align*}
  \tilde{\ZZ}\holim\nolimits_{\Delta^{\leq n}}C(Y)
  \rightarrow
  \holim\nolimits_{\Delta^{\leq n}}\tilde{\ZZ}C(Y)
\end{align*}
is the same as the cartesian-ness of the (n+1)-cube $\tilde{\ZZ}\widetilde{C(Y)}$. The idea is to (i) estimate the cocartesian-ness of the (n+1)-cube $\widetilde{C(Y)}$, (ii) applying $\tilde{\ZZ}$ will play nicely with the cocartesian-ness estimate, (iii) $\tilde{\ZZ}\widetilde{C(Y)}$ is an (n+1)-cube in simplicial abelian groups, hence by \cite[3.10]{Ching_Harper} it is $k$-cocartesian if and only if it is $(k-(n+1)+1)$-cartesian. To carry this out, the idea is to use Proposition \ref{prop:higher_dual_blakers_massey} to estimate the cocartesian-ness of the (n+1)-cube $\widetilde{C(Y)}$. We can organize our argument as follows, exactly as in the above cases for $n=1,2$.

First we recall the following proposition, which appears in \cite[7.31]{Ching_Harper_derived_Koszul_duality}; it can be proved by arguing exactly as in \cite[5.5.7]{Munson_Volic_book_project}. We have already verified it above in low dimensional cases; see \eqref{eq:sequence_paths_1_cubes} and \eqref{eq:sequence_paths_of_squares}, together with the resulting iterated  homotopy fiber calculations. 

\begin{prop}[Uniformity of faces]
\label{prop:comparing_faces_of_coface_cube_with_codegeneracy_cube}
Let $Z$ be a cosimplicial pointed space and $n\geq 0$. Let $\emptyset\neq T\subset[n]$ and $t\in T$. Then there is a weak equivalence
\begin{align*}
  (\iter\hofib)\partial_{\{t\}}^T\widetilde{Z}\wequiv
  \Omega^{|T|-1}(\iter\hofib)\capY_{|T|-1}
\end{align*}
in $\Space_*$, where $\capY_{|T|-1}$ denotes the codegeneracy $(|T|-1)$-cube associated to $Z$.
\end{prop}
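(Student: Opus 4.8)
Since $\emptyset\neq\{t\}\subset T$, every vertex of the face $\partial_{\{t\}}^T\widetilde{Z}$ is indexed by a nonempty subset $S$ with $\{t\}\subset S\subset T$, so $\partial_{\{t\}}^T\widetilde{Z}$ only sees the values $\widetilde{Z}_S=Z^{|S|-1}$ and the coface maps of $Z$; in particular the filled-in vertex $\widetilde{Z}_\emptyset$ never appears, and $\partial_{\{t\}}^T\widetilde{Z}$ is a $(|T|-1)$-cube built entirely from coface maps. Writing $m=|T|-1$, the plan is to prove
\begin{align*}
  (\iter\hofib)\partial_{\{t\}}^T\widetilde{Z}\wequiv\Omega^m(\iter\hofib)\capY_m
\end{align*}
by induction on $m$, the case $m=0$ being the convention that the iterated homotopy fiber of the $0$-cube $\widetilde{Z}_{\{t\}}=Z^0=\capY_0$ is weakly equivalent to $Z^0$.

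The engine of the induction is a relative form of Proposition \ref{prop:retraction_two_cube_argument}. Given maps $\function{d}{\capA}{\capB}$ and $\function{s}{\capB}{\capA}$ of $k$-cubes with $sd=\id_\capA$, I would first invoke the standard identity that the iterated homotopy fiber of a $(k+1)$-cube, regarded as a map of $k$-cubes, is the homotopy fiber of the induced map on iterated homotopy fibers (compare \cite[5.5.4]{Munson_Volic_book_project}); this presents the situation as the retraction $2$-cube
\begin{align*}
\xymatrix@1{
  (\iter\hofib)\capA\ar@{=}[d]\ar[r]^-{(\iter\hofib)d} &
  (\iter\hofib)\capB\ar[d]^-{(\iter\hofib)s}\\
  (\iter\hofib)\capA\ar@{=}[r] & (\iter\hofib)\capA
}
\end{align*}
to which Proposition \ref{prop:retraction_two_cube_argument} applies, yielding $(\iter\hofib)(\capA\xrightarrow{d}\capB)\wequiv\Omega(\iter\hofib)(\capB\xrightarrow{s}\capA)$.

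For the inductive step I would process the $m$ coface directions one at a time, exactly as in the low-dimensional computations \eqref{eq:sequence_paths_1_cubes} and \eqref{eq:sequence_paths_of_squares}. Splitting off a single coface direction presents the current $m$-cube as a map $\function{d}{\capA}{\capB}$ of $(m-1)$-cubes; the cosimplicial identities $s^jd^j=\id=s^jd^{j+1}$ furnish a levelwise codegeneracy section $\function{s}{\capB}{\capA}$ with $sd=\id$, while the remaining cosimplicial identities (namely $s^jd^i=d^is^{j-1}$ for $i<j$, $s^jd^i=d^{i-1}s^j$ for $i>j+1$, and $s^js^i=s^is^{j+1}$ for $i\leq j$) are exactly what is needed to verify that the auxiliary squares commute, so that $s$ is a genuine map of cubes. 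The relative principle above then extracts one factor of $\Omega$ and converts the chosen coface direction into the corresponding codegeneracy direction, producing a new $m$-cube one step closer to $\capY_m$. After $m$ steps every coface map has been traded for a codegeneracy map, a total of $\Omega^m$ has been accumulated, and the terminal $m$-cube is the codegeneracy cube $\capY_m$; this is precisely the two-step calculation producing $\Omega^2(\iter\hofib)\capY_2$ from $(F_1)$, $(F_2)$, and $(F_3)$ in \eqref{eq:sequence_paths_of_squares}.

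The hard part is the combinatorial bookkeeping in the inductive step. For an arbitrary face (arbitrary $T$, arbitrary $t\in T$) and at each stage of the process, when some directions have already been converted to codegeneracies, one must check that the codegeneracy maps singled out by the cosimplicial identities really do assemble into a retraction of $(m-1)$-cubes, i.e.\ that every auxiliary square commutes, and that the process terminates on $\capY_m$ on the nose, independently of the choice of $t$ and of the order in which the directions are processed. This independence is exactly the \emph{uniformity of faces} being asserted, and carrying out the verification in full generality is the content of \cite[5.5.7]{Munson_Volic_book_project}, of which \eqref{eq:sequence_paths_1_cubes} and \eqref{eq:sequence_paths_of_squares} are the $m=1$ and $m=2$ instances.
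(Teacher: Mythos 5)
Your proposal is correct and follows essentially the same route as the paper, which establishes the result by connecting each face $\partial_{\{t\}}^T\widetilde{Z}$ to the codegeneracy cube $\capY_{|T|-1}$ through a sequence of retractions built from codegeneracy maps (via the cosimplicial identities) and then repeatedly applying Proposition \ref{prop:retraction_two_cube_argument}, exactly as illustrated in \eqref{eq:sequence_paths_1_cubes} and \eqref{eq:sequence_paths_of_squares} and carried out in general in \cite[7.31--7.34]{Ching_Harper_derived_Koszul_duality} and \cite[5.5.7]{Munson_Volic_book_project}. Your inductive packaging via the relative retraction principle is a clean formalization of the same argument.
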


\begin{rem}
We will exploit Proposition \ref{prop:comparing_faces_of_coface_cube_with_codegeneracy_cube} below by taking $Z=C(Y)$ for $Y$ a cofibrant 1-connected $\K$-coalgebra. It follows from the observation that each face $\partial_{\{t\}}^T\widetilde{Z}$ of the $\widetilde{Z}$ construction is connected to the codegeneracy cube $\capY_{|T|-1}$ by a ``sequence of retractions'' built from codegeneracy maps: see \eqref{eq:sequence_paths_1_cubes} for the case of 1-faces and \eqref{eq:sequence_paths_of_squares} for the case of 2-faces; the higher dimensional faces are similar, and the argument is then completed by repeated application of Proposition \ref{prop:retraction_two_cube_argument}; see \cite[7.34]{Ching_Harper_derived_Koszul_duality}.
\end{rem}

\begin{thm}
\label{thm:cocartesian_and_cartesian_estimates}
Let $Y$ be a 1-connected cofibrant $\K$-coalgebra and $n\geq 1$. Consider the $\infty$-cartesian $(n+1)$-cube $\widetilde{C(Y)}$ in $\Space_*$ built from $C(Y)$. Then
\begin{itemize}
\item[(a)] the cube $\widetilde{C(Y)}$ is $(2n+5)$-cocartesian in $\Space_*$,
\item[(b)] the cube $\tilde{\ZZ}\widetilde{C(Y)}$ is $(2n+5)$-cocartesian in $\sAb$,
\item[(c)] the cube $\tilde{\ZZ}\widetilde{C(Y)}$ is $(n+5)$-cartesian in $\sAb$.
\end{itemize}
\end{thm}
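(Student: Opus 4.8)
The plan is to prove part (a) by the higher dual Blakers-Massey theorem (Proposition \ref{prop:higher_dual_blakers_massey}), exactly as in the cases $n=1,2$ worked out above, and then to deduce (b) and (c) formally. Write $W=[n]$, so that $\widetilde{C(Y)}$ is a $W$-cube (an $(n+1)$-cube) with terminal vertex $\widetilde{C(Y)}_W=C(Y)^n$. The input to Proposition \ref{prop:higher_dual_blakers_massey} is a cartesian-ness estimate $k_V$ for each face $\partial_{W-V}^W\widetilde{C(Y)}$, for $\emptyset\neq V\subset W$. The top face $V=W$ is the whole cube, which is $\infty$-cartesian by construction of $\widetilde{C(Y)}$, so $k_W=\infty$; the real content is to show that every proper face is sharply controlled, namely that $\partial_{W-V}^W\widetilde{C(Y)}$ is $(|V|+2)$-cartesian for each nonempty $V\subsetneq W$. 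Since $k_V=|V|+2$ is monotone in $|V|$ and $k_W=\infty$, this also verifies hypothesis (ii) of Proposition \ref{prop:higher_dual_blakers_massey}.

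To establish that uniform face estimate I would combine Dundas' higher Hurewicz theorem with the uniformity of faces, as in the low-dimensional cases. Every object $C(Y)^k=U\K^k Y$ is $1$-connected (since $UY$ is $1$-connected and $\tilde{\ZZ}$ preserves connectivity), so Dundas' higher Hurewicz theorem (Proposition \ref{prop:higher_hurewicz_theorem}, via Remark \ref{rem:Dundas_proof}) shows that the coface cubes built by iterating the Hurewicz map on the various $C(Y)^k$ are $(\id+2)$-cartesian at every cosimplicial level; in particular each coface $d$-cube is $(d+2)$-cartesian. The uniformity of faces (Proposition \ref{prop:comparing_faces_of_coface_cube_with_codegeneracy_cube}, extended to all faces through the terminal vertex by the sequence-of-retractions argument of the following remark together with repeated application of Proposition \ref{prop:retraction_two_cube_argument}) then identifies the total homotopy fiber of every $d$-dimensional face $\partial_{W-V}^W\widetilde{C(Y)}$ with $\Omega^d$ applied to the total homotopy fiber of the corresponding codegeneracy $d$-cube, whose connectivity is pinned down uniformly by the coface estimate. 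Matching the two computations forces every such face to have the same cartesian-ness as the coface $d$-cube, namely $(d+2)$-cartesian with $d=|V|$.

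Granting $k_V=|V|+2$ for $V\subsetneq W$ and $k_W=\infty$, the Blakers-Massey conclusion is that $\widetilde{C(Y)}$ is $k$-cocartesian with $k=|W|-1+\min_\lambda\sum_{V\in\lambda}k_V$, the minimum taken over partitions $\lambda$ of $W$ into nonempty sets. A partition into $r\geq 2$ parts contributes $\sum_{V\in\lambda}(|V|+2)=|W|+2r=(n+1)+2r$, minimized at $r=2$, while the one-part partition contributes $k_W=\infty$; hence the minimum is $(n+1)+4=n+5$ and $k=n+(n+5)=2n+5$, proving (a). Part (b) is then formal: $\tilde{\ZZ}$ is a left adjoint, hence commutes with the total homotopy cofiber (a finite homotopy colimit), and it preserves connectivity by the Hurewicz theorem, so it carries the $(2n+5)$-cocartesian cube $\widetilde{C(Y)}$ to a $(2n+5)$-cocartesian cube $\tilde{\ZZ}\widetilde{C(Y)}$ in $\sAb$. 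Part (c) is immediate from \cite[3.10]{Ching_Harper}: an $(n+1)$-cube in $\sAb$ is $k$-cocartesian if and only if it is $(k-(n+1)+1)$-cartesian, so $(2n+5)$-cocartesian yields $(2n+5-(n+1)+1)=(n+5)$-cartesian.

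The main obstacle is the face estimate in part (a), and specifically the extension of the uniformity of faces (Proposition \ref{prop:comparing_faces_of_coface_cube_with_codegeneracy_cube}) from the singleton-base faces $\partial_{\{t\}}^T\widetilde{C(Y)}$, for which it is stated, to all faces $\partial_{W-V}^W\widetilde{C(Y)}$ through the terminal vertex whose base $W-V$ need not be a singleton. One must check that each such face is still connected to a codegeneracy cube by a sequence of codegeneracy retractions, so that all $d$-dimensional faces genuinely inherit the single $(d+2)$-cartesian estimate governed by Dundas; once this uniform estimate is in hand, the Blakers-Massey partition bookkeeping and the cocartesian-to-cartesian conversion in $\sAb$ are routine.
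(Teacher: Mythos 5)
Your proposal is correct and follows essentially the same route as the paper: Dundas' higher Hurewicz theorem plus the uniformity-of-faces retraction argument to get the $(|V|+2)$-cartesian estimate on every face through the terminal vertex, Goodwillie's higher dual Blakers--Massey theorem with the two-block partition realizing the minimum $2n+5$, and then the left Quillen/connectivity-preservation and cocartesian-to-cartesian conversion in $\sAb$ for parts (b) and (c). The one subtlety you flag --- extending Proposition \ref{prop:comparing_faces_of_coface_cube_with_codegeneracy_cube} from singleton-base faces to all faces $\partial_{W-V}^W\widetilde{C(Y)}$ --- is real but is handled the same way in the paper (the lower-dimensional faces with non-singleton base are again coface subcubes admitting codegeneracy retractions, as in the $n=2$ case where the three $1$-faces with codomain $Z^2$ are treated ``similarly''), so your argument matches the paper's.
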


\begin{proof}
Consider part (a). Taking $W=\{0,1,\dots, n\}$ since $\widetilde{C(Y)}$ is an $(n+1)$-cube, our strategy is to use Goodwillie's higher dual Blakers-Massey theorem (Proposition \ref{prop:higher_dual_blakers_massey}) to estimate how close the $W$-cube $\widetilde{C(Y)}$ in $\Space_*$ is to being cocartesian; the input to Proposition \ref{prop:higher_dual_blakers_massey} requires that we estimate the cartesian-ness of each of the faces
\begin{align*}
  \partial_{W-V}^W\widetilde{C(Y)},\quad\quad
  \emptyset\neq V\subset W
\end{align*}
 We know from Dundas' higher Hurewicz theorem (Proposition \ref{prop:higher_hurewicz_theorem}), on iterations of the Hurewicz map applied to $UY$, together with the ``uniformity of faces'' property enforced by the cosimplicial identities and summarized in Proposition \ref{prop:comparing_faces_of_coface_cube_with_codegeneracy_cube}, that for each nonempty subset $V\subset W$, the $V$-cube $\partial_{W-V}^W\widetilde{C(Y)}$ is $(|V|+2)$-cartesian; since it is $\infty$-cartesian by construction when $V=W$, it follows immediately from Proposition \ref{prop:higher_dual_blakers_massey} that $\widetilde{C(Y)}$ is $(2n+5)$-cocartesian in $\Space_*$, which finishes the proof of part (a). Part (b) follows from the fact that $\function{\tilde{\ZZ}}{\Space_*}{\sAb}$ is a left Quillen functor together with the fact that $\tilde{\ZZ}$ preserves connectivity of maps between $1$-connected spaces. Part (c) follows easily from the fact that $\sAb$ and $\Chaincx_{\geq 0}(\ZZ)$ are Quillen equivalent via the normalization functor and that the homotopy groups of a simplicial abelian group are naturally isomorphic to their associated homology groups as chain complexes, together with the obvious chain complex analog of \cite[3.10]{Ching_Harper}; in other words, that $\tilde{\ZZ}\widetilde{C(Y)}$ is $k$-cocartesian if and only if it is $(k-n)$-cartesian. Taking $k=(2n+5)$ from part (b) finishes the proof.
\end{proof}

\begin{proof}[Proof of Theorem \ref{thm:connectivities_for_map_that_commutes_Z_into_inside_of_holim}]
We want to estimate how connected the comparison map
$
  \tilde{\ZZ}\holim\nolimits_{\Delta^{\leq n}} C(Y)\rightarrow
  \holim\nolimits_{\Delta^{\leq n}} \tilde{\ZZ}\,C(Y)
$
is, which is equivalent to estimating how cartesian $\tilde{\ZZ}\widetilde{C(Y)}$ is, and Theorem \ref{thm:cocartesian_and_cartesian_estimates}(c) completes the proof.
\end{proof}

Similar to Remark \ref{rem:more_information_in_arguemnt_above}, there is more information in the proof of Theorem \ref{thm:cocartesian_and_cartesian_estimates} above. We know that for exactly one $w\in W$, the $n$-face $\partial_{\{w\}}^W\widetilde{C(Y)}$ (i.e., the unique $n$-face of this form not involving the $\K$-coaction map on $Y$) in the proof of Theorem \ref{thm:cocartesian_and_cartesian_estimates} is precisely the coface $n$-cube $\capX_n$ in Remark \ref{rem:Dundas_proof} when taking $X=UY$. Since $UY$ is 1-connected, we know that $\capX_n$ is an $(\id+2)$-cartesian $n$-cube by the higher Hurewicz theorem; in particular, this $n$-face $\partial_{\{w\}}^W\widetilde{C(Y)}$ is $(n+2)$-cartesian and hence its total homotopy fiber is $(n+1)$-connected. By Proposition \ref{prop:comparing_faces_of_coface_cube_with_codegeneracy_cube}, we know
\begin{align*}
  (\iter\hofib)\partial_{\{w\}}^W\widetilde{C(Y)}\wequiv
  \Omega^{(n+1)-1}(\iter\hofib)\capY_{(n+1)-1}
\end{align*}
similar to Remark \ref{rem:more_information_in_arguemnt_above}, hence  by Proposition \ref{prop:iterated_homotopy_fibers_calculation} we know that
\begin{align*}
  \hofib(\holim\nolimits_{\Delta^{\leq n}}C(Y)\rarrow\holim\nolimits_{\Delta^{\leq n-1}}C(Y))
  \wequiv\Omega^n(\iter\hofib)\capY_n
\end{align*}
is $(n+1)$-connected; therefore the map 
$
  \holim\nolimits_{\Delta^{\leq n}}C(Y)\rarrow\holim\nolimits_{\Delta^{\leq n-1}}C(Y)
$
is $(n+2)$-connected. Also, since $\Omega^n(\iter\hofib)\capY_n$ is (n+1)-connected, then we know $(\iter\hofib)\capY_n$ is (2n+1)-connected. The upshot is that we have just proved Proposition \ref{prop:iterated_hofiber_codegeneracy_cube} and Theorem \ref{thm:estimating_connectivity_of_maps_in_tower_C_of_Y}.

\begin{prop}
\label{prop:iterated_hofiber_codegeneracy_cube}
Let $Y$ be a cofibrant $\K$-coalgebra and $n\geq 1$. Denote by $\capY_n$ the codegeneracy $n$-cube associated to the cosimplicial cobar construction $C(Y)$ of $Y$. If $Y$ is $1$-connected, then the total homotopy fiber of $\capY_n$ is $(2n+1)$-connected.
\end{prop}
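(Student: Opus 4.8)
The plan is to read off the asserted connectivity of the total homotopy fiber of $\capY_n$ directly from the cartesian-ness estimates for the faces of the $\infty$-cartesian $(n+1)$-cube $\widetilde{C(Y)}$, by combining Dundas' higher Hurewicz theorem with the ``uniformity of faces'' comparison (Proposition \ref{prop:comparing_faces_of_coface_cube_with_codegeneracy_cube}); this is exactly the ``more information'' packaged in the paragraph following the proof of Theorem \ref{thm:cocartesian_and_cartesian_estimates}, so the proof amounts to recording that observation.

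First I would take $W=\{0,1,\dots,n\}$ and single out the unique $n$-face of the form $\partial_{\{w\}}^W\widetilde{C(Y)}$ that does not involve the $\K$-coaction map on $Y$. Setting $X=UY$, I would observe (as in Remark \ref{rem:Dundas_proof}) that this face is precisely the coface $n$-cube $\capX_n$ built by iterating the space level Hurewicz map. Since $Y$ is $1$-connected, so is $UY$, and hence by Proposition \ref{prop:higher_hurewicz_theorem} the cube $\capX_n$ is $(\id+2)$-cartesian; in particular it is $(n+2)$-cartesian, so its total homotopy fiber is $(n+1)$-connected. The point of working with this particular face is that it is the only one carrying the full-strength higher Hurewicz estimate directly, without passing through the coaction.

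Next I would transport this estimate to the codegeneracy cube via the uniformity-of-faces weak equivalence (Proposition \ref{prop:comparing_faces_of_coface_cube_with_codegeneracy_cube}), taking $Z=C(Y)$, $T=W$, and $t=w$. Since $|T|=n+1$, this supplies a weak equivalence $(\iter\hofib)\partial_{\{w\}}^W\widetilde{C(Y)}\wequiv\Omega^n(\iter\hofib)\capY_n$. As the left-hand side is $(n+1)$-connected by the previous step, the looped object $\Omega^n(\iter\hofib)\capY_n$ is $(n+1)$-connected, and shifting up by the $n$ loop coordinates yields that $(\iter\hofib)\capY_n$ is $(2n+1)$-connected, which is the claim.

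The main obstacle is the identification and transport step rather than any estimate made from scratch: one must verify that the distinguished $n$-face is genuinely the coface cube $\capX_n$ (so the higher Hurewicz theorem applies with its $(\id+2)$-cartesian strength), and that this face is connected to the codegeneracy cube $\capY_n$ through the sequence of retractions built from codegeneracy maps that underlies Proposition \ref{prop:comparing_faces_of_coface_cube_with_codegeneracy_cube} (compare \eqref{eq:sequence_paths_1_cubes} and \eqref{eq:sequence_paths_of_squares} in low dimensions). Once these inputs are in place, the remaining connectivity bookkeeping—translating $(n+2)$-cartesian into an $(n+1)$-connected total fiber, and then undoing the $n$-fold loop—is routine.
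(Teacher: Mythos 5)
Your proposal is correct and follows essentially the same route as the paper: the authors likewise extract the $(2n+1)$-connectivity by identifying the distinguished $n$-face $\partial_{\{w\}}^W\widetilde{C(Y)}$ with the coface cube $\capX_n$ for $X=UY$, applying Dundas' higher Hurewicz theorem to get that it is $(n+2)$-cartesian (so its total homotopy fiber is $(n+1)$-connected), and then using Proposition \ref{prop:comparing_faces_of_coface_cube_with_codegeneracy_cube} to identify that total fiber with $\Omega^n(\iter\hofib)\capY_n$ and delooping. Your connectivity bookkeeping matches the paper's exactly.
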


\begin{proof}[Proof of Theorem \ref{thm:estimating_connectivity_of_maps_in_tower_C_of_Y}]
The homotopy fiber of 
$
  \holim\nolimits_{\Delta^{\leq n}}C(Y)\rarrow\holim\nolimits_{\Delta^{\leq n-1}}C(Y)
$
is weakly equivalent to $\Omega^n$ of the total homotopy fiber of the codegeneracy $n$-cube $\capY_{n}$ associated to $C(Y)$ by Proposition \ref{prop:iterated_homotopy_fibers_calculation}, hence by Proposition \ref{prop:iterated_hofiber_codegeneracy_cube} this map is $(n+2)$-connected.
\end{proof}

\subsection{Strong convergence for the $\holim_\Delta C(Y)$ spectral sequence}

The following strong convergence result for the homotopy spectral sequence (\cite[IX.4]{Bousfield_Kan}, \cite[VIII.1]{Goerss_Jardine}) associated to the cosimplicial cobar construction $C(Y)$ of a $\K$-coalgebra $Y$ is a corollary of the connectivity estimates in Theorem \ref{thm:estimating_connectivity_of_maps_in_tower_C_of_Y}.

\begin{thm}
\label{thm:strong_convergence_ss}
If $Y$ is a $1$-connected cofibrant $\K$-coalgebra, then the homotopy spectral sequence
$
  E^2_{-s,t} = \pi^s\pi_t C(Y)
  \Rightarrow
  \pi_{t-s}\holim\nolimits_\Delta C(Y)
$
converges strongly (Remark \ref{rem:strong_convergence}).
\end{thm}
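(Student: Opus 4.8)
The plan is to read off strong convergence directly from the connectivity estimates for the $\holim$ tower $\{\holim\nolimits_{\Delta^{\leq n}} C(Y)\}_n$ supplied by Theorem \ref{thm:estimating_connectivity_of_maps_in_tower_C_of_Y}. First I would recall (via Proposition \ref{prop:comparing_holim_with_Tot} and \cite[IX.4]{Bousfield_Kan}) that the homotopy spectral sequence in question is precisely the spectral sequence associated to this tower of fibrations, with abutment $\pi_{t-s}\holim\nolimits_\Delta C(Y)$ filtered by the kernels $F^s\pi_q := \ker\bigl(\pi_q\holim\nolimits_\Delta C(Y)\rightarrow\pi_q\holim\nolimits_{\Delta^{\leq s-1}}C(Y)\bigr)$ of the natural maps into the finite stages, where $q = t-s$. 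In these terms, \emph{strong convergence} (Remark \ref{rem:strong_convergence}) amounts to two assertions: that the $\lim^1$ terms of the tower vanish, and that this filtration is complete, exhaustive, Hausdorff, and finite in each total degree $q$.

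Next I would feed in the connectivity estimate. By Theorem \ref{thm:estimating_connectivity_of_maps_in_tower_C_of_Y}, each structure map $\holim\nolimits_{\Delta^{\leq n}}C(Y)\rightarrow\holim\nolimits_{\Delta^{\leq n-1}}C(Y)$ is $(n+2)$-connected; hence for each fixed $t$ it induces an isomorphism on $\pi_t$ as soon as $n\geq t-1$. It follows that the tower of abelian groups $\{\pi_t\holim\nolimits_{\Delta^{\leq n}}C(Y)\}_n$ is eventually constant, and in particular pro-constant: it satisfies the Mittag--Leffler condition with $\lim^1 = 0$. The Milnor-type short exact sequence for the homotopy groups of $\holim\nolimits_\Delta C(Y)=\holim\nolimits_n\holim\nolimits_{\Delta^{\leq n}}C(Y)$ then collapses, giving natural isomorphisms $\pi_t\holim\nolimits_\Delta C(Y)\xrightarrow{\iso}\pi_t\holim\nolimits_{\Delta^{\leq n}}C(Y)$ for all $n\geq t-1$. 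Consequently, in each total degree $q$, the map $\pi_q\holim\nolimits_\Delta C(Y)\rightarrow\pi_q\holim\nolimits_{\Delta^{\leq s-1}}C(Y)$ is already an isomorphism once $s\geq q$, so $F^s\pi_q = 0$ for $s\geq q$ and the filtration terminates after finitely many steps.

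Finally I would assemble these facts into the desired statement by appealing to the convergence criteria of Bousfield--Kan \cite[IX.5.3, IX.5.4]{Bousfield_Kan}: a tower whose homotopy groups are degreewise pro-constant has a strongly (completely) convergent homotopy spectral sequence, with $E_\infty^{-s,t}$ the associated graded of the now-finite filtration above and with $E_r^{-s,t}=E_\infty^{-s,t}$ for $r\gg 0$. The one point requiring care is the usual low-degree fringe behavior of the homotopy spectral sequence of a cosimplicial space, where $\pi_0$ and $\pi_1$ are a priori only pointed sets or nonabelian groups; this is precisely where the $1$-connectivity hypothesis on $Y$ is used, since it guarantees that every stage $\holim\nolimits_{\Delta^{\leq n}}C(Y)$ is $1$-connected (Theorem \ref{thm:estimating_connectivity_of_maps_in_tower_C_of_Y}) and hence that we remain throughout in the stable range where the spectral sequence is a spectral sequence of abelian groups and the additive convergence machinery applies without modification. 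I expect this fringe bookkeeping to be the only genuine obstacle; the heart of the matter---the linear growth of connectivity in the tower---has already been supplied by Theorem \ref{thm:estimating_connectivity_of_maps_in_tower_C_of_Y}.
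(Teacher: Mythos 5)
Your proposal is correct and follows exactly the paper's route: the paper's entire proof is the one-line observation that strong convergence "follows from the connectivity estimates in Theorem \ref{thm:estimating_connectivity_of_maps_in_tower_C_of_Y}," and your argument is precisely the standard unpacking of that claim (pro-constancy of the degreewise homotopy groups of the tower, vanishing $\lim^1$, finiteness of the filtration, and the Bousfield--Kan convergence criteria). The only difference is that you supply the details the paper leaves implicit, including the fringe bookkeeping handled by $1$-connectivity.
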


\begin{proof}
This follows from the connectivity estimates in Theorem \ref{thm:estimating_connectivity_of_maps_in_tower_C_of_Y}.
\end{proof}

\begin{rem}
\label{rem:strong_convergence}
By \emph{strong convergence} of $\{E^r\}$ to $\pi_*\holim\nolimits_\Delta C(Y)$ we mean that (i) for each $(-s,t)$, there exists an $r$ such that $E^r_{-s,t}=E^\infty_{-s,t}$ and (ii) for each $i$, $E^\infty_{-s,s+i}=0$ except for finitely many $s$. Strong convergence implies that for each $i$, $\{E^\infty_{-s,s+i}\}$ is the set of filtration quotients from a finite filtration of $\pi_i\holim\nolimits_\Delta C(Y)$; see, for instance, \cite[IV.5.6, IX.5.3, IX.5.4]{Bousfield_Kan} and \cite[p. 255]{Dwyer_strong_convergence}.
\end{rem}

This is the homotopy spectral sequence associated to the cosimplicial cobar construction \eqref{defn:cobar_construction}; it generalizes to $\K$-coalgebra complexes the unstable Adams spectral sequence of a space; see \cite{Bousfield_Kan_spectral_sequence} and the subsequent work of \cite{Bendersky_Curtis_Miller, Bendersky_Thompson}.

\section{Background on simplicial structures}
\label{sec:simplicial_structures}

In this section we recall the simplicial structure on pointed spaces and simplicial abelian groups; the expert may wish to skim through, or skip entirely, this background section.

\begin{defn}
\label{defn:simplicial_structure_pointed_spaces}
Let $X,X'$ be pointed spaces and $K$ a simplicial set. The \emph{tensor product} $X\tensordot K$ in $\Space_*$, \emph{mapping space} $\Hombold_{\Space_*}(X,X')$ in $\sSet$, and \emph{mapping object} $\hombold_{\Space_*}(K,X)$ in $\Space_*$ are 
$X\tensordot K :=X\Smash K_+$, $\Hombold_{\Space_*}(X,X')_n := \hom_{\Space_*}(X\tensordot\Delta[n],X')$, and $\hombold_{\Space_*}(K,X')_n:=\hom_{\Space_*}(K_+\tensordot\Delta[n],X')$, 
where $\hombold_{\Space_*}(K,X')$ is pointed by the constant map.
\end{defn}

\begin{defn}
\label{defn:simplicial_structure_sAb}
Let $Y,Y'$ be simplicial abelian groups and $K$ a simplicial set. The \emph{tensor product} $Y\tensordot K$ in $\sAb$, \emph{mapping space} $\Hombold_{\sAb}(Y,Y')$ in $\sSet$, and \emph{mapping object} $\hombold_{\sAb}(K,Y)$ in $\sAb$ are defined by $Y\tensordot K:=Y\tensor\ZZ K$, $\Hombold_{\sAb}(Y,Y')_n := \hom_{sAb}(Y\tensordot\Delta[n],Y')$, and 
$\hombold_{\sAb}(K,Y')_n:=\hom_{sAb}(\ZZ K\tensordot\Delta[n],Y')$, where the mapping object $\hombold_{\sAb}(K,Y')$ inherits the usual abelian group structure from $Y'$. 
\end{defn}

For ease of notation purposes, we sometimes drop the $\Space_*$ and $\sAb$ decorations from the notation and simply write $\Hombold$ and $\hombold$.

\begin{prop}
With the above definitions of mapping object, tensor product, and mapping space the categories of pointed spaces $\Space_*$ and simplicial abelian groups $\sAb$ are  simplicial model categories.
\end{prop}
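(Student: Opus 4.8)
The plan is to verify the two conditions defining a simplicial model category: first, that the stated tensor, mapping space, and mapping object assemble into a two-variable adjunction making each of $\Space_*$ and $\sAb$ tensored, cotensored, and enriched over $\sSet$; and second, Quillen's pushout-product axiom (SM7) relating this simplicial structure to the model structure. The first condition is formal, and the second is the only point that uses the model structures. For $\Space_*=\sSet_*$ both are classical (see \cite{Goerss_Jardine}), so the substantive work is the case of $\sAb$.

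For the two-variable adjunction on $\sAb$, the key identity is that the free abelian group functor carries cartesian products to tensor products, $\ZZ(K\times L)\iso\ZZ K\tensor\ZZ L$, which holds levelwise because $\ZZ(S\times T)\iso\ZZ S\tensor\ZZ T$ for sets. Combined with the (levelwise) tensor-hom adjunction in $\sAb$ and the free-forgetful adjunction $\function{\ZZ}{\sSet}{\sAb}$, $\function{U}{\sAb}{\sSet}$, this yields the natural isomorphisms
\begin{align*}
  \hom_{\sAb}(Y\tensordot K,Y')\Iso
  \hom_{\sSet}(K,\Hombold_{\sAb}(Y,Y'))\Iso
  \hom_{\sAb}(Y,\hombold_{\sAb}(K,Y')),
\end{align*}
where $Y\tensordot K=Y\tensor\ZZ K$; indeed $\hombold_{\sAb}(K,Y')$ is precisely the internal hom $\underline{\hom}_{\sAb}(\ZZ K,Y')$, and the middle isomorphism follows by reducing to $K=\Delta[n]$ and then passing to colimits. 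The analogous isomorphisms for $\Space_*$ follow directly from the exponential law in $\sSet$ applied to $X\tensordot K=X\Smash K_+$.

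For SM7 on $\sAb$, I would use that the model structure is right-transferred along $\ZZ\dashv U$, so that $U$ detects (and creates) fibrations and weak equivalences. The crucial compatibility is that $U$ preserves the mapping object: using $\ZZ(K\times\Delta[n])\iso\ZZ K\tensor\ZZ\Delta[n]$ and the free-forgetful adjunction one checks $U\hombold_{\sAb}(K,Y')\iso\hombold_{\sSet}(K,UY')$, the ordinary function complex. I would then verify SM7 in its cotensor (pullback-corner) form: given a fibration $\function{p}{Y'}{Y''}$ in $\sAb$ and a cofibration $\function{j}{K}{L}$ in $\sSet$, the induced map
\begin{align*}
  \hombold_{\sAb}(L,Y')\rarrow
  \hombold_{\sAb}(K,Y')\times_{\hombold_{\sAb}(K,Y'')}\hombold_{\sAb}(L,Y'')
\end{align*}
is sent by $U$ (which preserves the pullback, being a right adjoint) to the corresponding pullback-corner map for $Up$ and $j$ in $\sSet$; the latter is a fibration, trivial if $p$ or $j$ is, by SM7 in $\sSet$. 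Since $U$ detects fibrations and weak equivalences, the corner map is a fibration in $\sAb$, trivial in the required cases. By the two-variable adjunction this is equivalent to the tensor form of SM7, completing the verification.

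The main obstacle is not any single hard estimate but the careful bookkeeping of the compatibility between the forgetful functor and the simplicial structure: establishing $\ZZ(K\times L)\iso\ZZ K\tensor\ZZ L$, deducing $U\hombold_{\sAb}(K,-)\iso\hombold_{\sSet}(K,U-)$, and confirming that the transferred model structure on $\sAb$ exists with $U$ detecting fibrations and weak equivalences (standard; see \cite{Goerss_Jardine}). One should also track reduced versus free coefficients and pointed versus unpointed simplicial sets: the $\sAb$ tensor and cotensor here use the free functor $\ZZ$ and unpointed $K$, so the transfer is along unpointed $\sSet$, whereas $\Space_*=\sSet_*$ is treated directly.
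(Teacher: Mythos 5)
Your proposal is correct, and it is essentially the standard argument: the paper's entire proof is a citation to \cite[II.3]{Goerss_Jardine}, and the route you take---establishing the two-variable adjunction via $\ZZ(K\times L)\iso\ZZ K\tensor\ZZ L$ and then verifying SM7 in its pullback-corner form by pushing everything through the forgetful functor $U$, which creates fibrations and weak equivalences in the transferred model structure on $\sAb$---is precisely the argument carried out in that reference. The compatibility $U\hombold_{\sAb}(K,Y')\iso\hombold_{\sSet}(K,UY')$ that you single out as the crucial bookkeeping step is also recorded separately in the paper as part of Proposition \ref{prop:useful_properties_of_the_adjunction}.
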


\begin{proof}
This is proved, for instance, in \cite[II.3]{Goerss_Jardine}.
\end{proof}

\begin{rem}
\label{rem:useful_adjunction_isomorphisms_simplicial_structure}
Let $\M$ denote either $\Space_*$ or $\sAb$. In particular, there are isomorphisms
\begin{align}
\label{eq:tensordot_adjunction_isomorphisms}
  \hom_{\M}(X\tensordot K,X')
  \Iso\hom_{\M}(X,\hombold(K,X'))
  \Iso\hom_\sSet(K,\Hombold(X,X'))
\end{align}
in $\Set$, natural in $X,K,X'$, that extend to isomorphisms
\begin{align*}
  \Hombold_{\M}(X\tensordot K,X')
  \Iso\Hombold_{\M}(X,\hombold(K,X'))
  \Iso\Hombold_\sSet(K,\Hombold(X,X'))
\end{align*}
in $\sSet$, natural in $X,K,X'$.
\end{rem}

Recall that the free-forgetful adjunction \eqref{eq:homology_adjunction}, whose unit is the space level Hurewicz map, is a Quillen adjunction with left adjoint on top and $U$ the forgetful functor; in particular, there is an isomorphism
$\hom_{\sAb}(\tilde{\ZZ}X,Y)\Iso\hom_{\Space_*}(X,UY)$
in $\Set$, natural in $X,Y$. The following proposition, which is proved in \cite[II.2.9]{Goerss_Jardine}, is fundamental to this paper. It verifies that the free-forgetful adjunction \eqref{eq:homology_adjunction} meshes nicely with the simplicial structure.

\begin{prop}
\label{prop:useful_properties_of_the_adjunction}
Let $X$ be a pointed space, $Y$ a simplicial abelian group, and $K,L$ simplicial sets.  Then
\begin{itemize}
\item[(a)] there is a natural isomorphism
$
  \sigma\colon\thinspace \tilde{\ZZ}(X)\tensordot K \xrightarrow{\Iso}\tilde{\ZZ}(X\tensordot K)
$;
\item[(b)] there is an isomorphism
$
  \Hombold(\tilde{\ZZ}X,Y)\Iso\Hombold(X,UY)
$
in $\sSet$, natural in $X,Y$, that extends the adjunction isomorphism associated to \eqref{eq:homology_adjunction};
\item[(c)] there is an isomorphism
$
  U\hombold(K,Y)\Iso\hombold(K,UY)
$
in $\Space_*$, natural in $K,Y$.
\item[(d)] there is a natural map
$
  \function{\sigma}{U(Y)\tensordot K}{U(Y\tensordot K)}
$
induced by $U$.
\item[(e)] the functors $\tilde{\ZZ}$ and $U$ are simplicial functors (Remark \ref{rem:simplicial_functors}) with the structure maps $\sigma$ of (a) and (d), respectively.
\end{itemize}
\end{prop}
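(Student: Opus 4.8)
The plan is to treat part (a) as the crux and to obtain the remaining parts from it by formal manipulation with the underlying free-forgetful adjunctions. The essential point is that the reduced free abelian group functor is strong monoidal from pointed simplicial sets (with $\Smash$) to simplicial abelian groups (with $\tensor$): for pointed simplicial sets $A,B$ there is a natural isomorphism $\tilde{\ZZ}(A\Smash B)\iso\tilde{\ZZ}(A)\tensor\tilde{\ZZ}(B)$, since in each simplicial degree a basis for either side is given by pairs of non-basepoint simplices. First I would establish (a) by specializing this to $A=X$ and $B=K_+$, combined with the identification $\tilde{\ZZ}(K_+)\iso\ZZ K$ (the reduced free abelian group collapses the disjoint basepoint), so that
\begin{align*}
  \tilde{\ZZ}(X)\tensordot K
  =\tilde{\ZZ}(X)\tensor\ZZ K
  &\iso\tilde{\ZZ}(X)\tensor\tilde{\ZZ}(K_+)\\
  &\iso\tilde{\ZZ}(X\Smash K_+)
  =\tilde{\ZZ}(X\tensordot K);
\end{align*}
naturality in $X$ and $K$ is immediate from the construction, and this composite is the structure map $\sigma$ of (a).

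Given (a), part (b) follows by evaluating levelwise in the $\Delta[n]$ direction. By definition $\Hombold_{\sAb}(\tilde{\ZZ}X,Y)_n=\hom_{\sAb}(\tilde{\ZZ}X\tensordot\Delta[n],Y)$; applying $\sigma$ and then the ordinary adjunction isomorphism $\hom_{\sAb}(\tilde{\ZZ}(-),Y)\iso\hom_{\Space_*}(-,UY)$ yields
\begin{align*}
  \hom_{\sAb}(\tilde{\ZZ}X\tensordot\Delta[n],Y)
  &\iso\hom_{\sAb}(\tilde{\ZZ}(X\tensordot\Delta[n]),Y)\\
  &\iso\hom_{\Space_*}(X\tensordot\Delta[n],UY)
  =\Hombold_{\Space_*}(X,UY)_n.
\end{align*}
These bijections are natural in $[n]$, so they assemble into an isomorphism of simplicial sets; the only thing to verify is compatibility with the cosimplicial operators of $\Delta[\bullet]$, which is automatic from naturality of $\sigma$ and of the adjunction. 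Restricting to $n=0$ recovers the underlying adjunction isomorphism, as claimed.

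For (c) I would identify both mapping objects with a common set. On the $\sAb$ side, $\hombold_{\sAb}(K,Y)_n=\hom_{\sAb}(\ZZ K\tensor\ZZ\Delta[n],Y)\iso\hom_{\sAb}(\ZZ(K\times\Delta[n]),Y)\iso\hom_{\sSet}(K\times\Delta[n],Y)$, using $\ZZ K\tensor\ZZ\Delta[n]\iso\ZZ(K\times\Delta[n])$ and the unpointed free-forgetful adjunction (here $Y$ on the right denotes its underlying simplicial set). On the $\Space_*$ side, $\hombold_{\Space_*}(K,UY)_n=\hom_{\Space_*}(K_+\Smash\Delta[n]_+,UY)\iso\hom_{\sSet}(K\times\Delta[n],Y)$, using $K_+\Smash\Delta[n]_+\iso(K\times\Delta[n])_+$ and the $(-)_+$ adjunction. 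These two identifications coincide, giving an isomorphism of the underlying pointed simplicial sets; since the abelian group structure on $\hombold_{\sAb}(K,Y)$ is precisely what $U$ forgets, this is the asserted isomorphism in $\Space_*$, natural in $K$ and $Y$.

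Finally, (d) is constructed formally from (a) and the counit $\varepsilon\colon\thinspace\K=\tilde{\ZZ}U\rarrow\id$: the composite $\tilde{\ZZ}(U(Y)\tensordot K)\iso\tilde{\ZZ}(UY)\tensordot K=\K Y\tensordot K\xrightarrow{\varepsilon_Y\tensordot K}Y\tensordot K$ is, under $\tilde{\ZZ}\dashv U$, the transpose of the desired natural map $\function{\sigma}{U(Y)\tensordot K}{U(Y\tensordot K)}$. Part (e) then follows from the general principle that a functor equipped with a natural tensoring-comparison map $\sigma$ obeying the associativity and unit coherence axioms is canonically a simplicial functor: the enrichment $\Hombold(X,X')\rarrow\Hombold(FX,FX')$ sends a degree-$n$ map $X\tensordot\Delta[n]\rarrow X'$ to $F(X\tensordot\Delta[n])\rarrow FX'$ precomposed with $\sigma$. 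I expect the main obstacle to be exactly this last verification---checking that the $\sigma$ of (a) and of (d) satisfy the two coherence diagrams, namely compatibility with the associator $(X\tensordot K)\tensordot L\iso X\tensordot(K\times L)$ (mirrored by $\ZZ K\tensor\ZZ L\iso\ZZ(K\times L)$) and with the unit $\Delta[0]$---since it is this coherence that promotes the pointwise isomorphisms of (a)--(d) into genuine $\sSet$-enriched functoriality; parts (a)--(d) themselves are comparatively routine computations with free abelian groups and adjunctions.
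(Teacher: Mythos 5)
Your proposal is correct. Note that the paper does not actually write out a proof of this proposition: it simply cites \cite[II.2.9]{Goerss_Jardine}, so there is no in-paper argument to compare against; what you have supplied is essentially the standard argument that the cited reference contains. Your key step --- that $\tilde{\ZZ}$ is strong monoidal, $\tilde{\ZZ}(A\Smash B)\iso\tilde{\ZZ}(A)\tensor\tilde{\ZZ}(B)$, combined with $\tilde{\ZZ}(K_+)\iso\ZZ K$ --- is exactly the right source of the isomorphism $\sigma$ in (a), and deriving (b) and (c) by evaluating degreewise against $\Delta[n]$ and invoking the (pointed and unpointed) free--forgetful adjunctions is the standard route. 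Your construction of (d) as the adjoint transpose of $\tilde{\ZZ}(U(Y)\tensordot K)\iso\K Y\tensordot K\xrightarrow{\varepsilon\tensordot\id}Y\tensordot K$ is the usual doctrinal-adjunction recipe by which a lax structure on the right adjoint is induced from the strong structure on the left adjoint, and it does agree with the evident map $(y,k)\mapsto y\tensor k$. You are also right to flag the coherence diagrams (compatibility of $\sigma$ with $(X\tensordot K)\tensordot L\iso X\tensordot(K\times L)$ and with the unit $\Delta[0]$) as the real content of (e); for the record, both follow from the observation that all the isomorphisms in sight are induced by the bijections on canonical bases of free abelian groups, under which the associator for $\Smash$ and the isomorphism $\ZZ K\tensor\ZZ L\iso\ZZ(K\times L)$ correspond, so the two coherence squares commute by inspection on generators. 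The one small point worth making explicit in (c) is that the identification must respect basepoints (the zero homomorphism versus the constant map), which it does since $0\in Y$ is the basepoint of $UY$.
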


\begin{rem}
\label{rem:simplicial_functors}
For a useful reference on simplicial functors in the context of homotopy theory, see \cite[9.8.5]{Hirschhorn}.
\end{rem}

The following proposition is fundamental to this paper.

\begin{prop}
\label{prop:unit_and_counit_are_simplicial}
Consider the monad $U\tilde{\ZZ}$ on $\Space_*$ and the comonad $\K=\tilde{\ZZ}U$ on $\sAb$ associated to the adjunction $(\tilde{\ZZ},U)$ in \eqref{eq:homology_adjunction}. The four associated natural transformations \eqref{eq:TQ_homology_spectrum_functor_natural_transformations}
are simplicial natural transformations.
\end{prop}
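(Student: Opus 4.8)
The plan is to deduce everything from Proposition \ref{prop:useful_properties_of_the_adjunction}, which already records that $\tilde{\ZZ}$ and $U$ are simplicial functors and that the adjunction $(\tilde{\ZZ},U)$ of \eqref{eq:homology_adjunction} is a simplicial adjunction. Recall that a natural transformation $\alpha\colon\thinspace F\Rightarrow G$ between simplicial functors $F,G\colon\thinspace\M\rightarrow\M'$ is a \emph{simplicial} natural transformation provided that, for every simplicial set $K$, the comparison of $\alpha\tensordot\id_K$ followed by the tensoring structure map of $G$ with the tensoring structure map of $F$ followed by $\alpha$ commutes; equivalently, via the adjunction isomorphisms \eqref{eq:tensordot_adjunction_isomorphisms}, that $\alpha$ is compatible with the mapping-object structure maps.

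First I would observe that every functor appearing as the source or target of the four natural transformations in \eqref{eq:TQ_homology_spectrum_functor_natural_transformations} is a simplicial functor. Indeed, the identity functors are trivially simplicial, and $\tilde{\ZZ}$, $U$ are simplicial by Proposition \ref{prop:useful_properties_of_the_adjunction}(e); since a composite of simplicial functors is again simplicial, with structure map the evident composite of the structure maps $\sigma$ of the factors, the functors $U\tilde{\ZZ}$, $\K=\tilde{\ZZ}U$, $U\tilde{\ZZ}U\tilde{\ZZ}$, and $\K\K=\tilde{\ZZ}U\tilde{\ZZ}U$ are all simplicial.

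Next I would treat the unit $\eta$ and counit $\varepsilon$. By Proposition \ref{prop:useful_properties_of_the_adjunction}(b) there is a natural isomorphism $\Hombold(\tilde{\ZZ}X,Y)\Iso\Hombold(X,UY)$ in $\sSet$ extending the ordinary adjunction isomorphism of \eqref{eq:homology_adjunction}; this is precisely the assertion that $(\tilde{\ZZ},U)$ is a simplicial adjunction. It is a standard fact of enriched category theory that the unit and counit of an enriched adjunction are enriched natural transformations. Concretely, the required compatibility of $\eta$ (resp. $\varepsilon$) with the tensoring structure maps $\sigma$ of Proposition \ref{prop:useful_properties_of_the_adjunction}(a) and (d) follows from the naturality of the enriched adjunction isomorphism together with the triangle identities. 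Hence $\eta\colon\thinspace\id\rightarrow U\tilde{\ZZ}$ and $\varepsilon\colon\thinspace\K\rightarrow\id$ are simplicial natural transformations.

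Finally, the monad multiplication and comonad comultiplication are obtained by whiskering: the multiplication $U\tilde{\ZZ}U\tilde{\ZZ}\rightarrow U\tilde{\ZZ}$ is $U\varepsilon\tilde{\ZZ}$, and the comultiplication $m\colon\thinspace\K\rightarrow\K\K$ is $\tilde{\ZZ}\eta U$ (the map $\tilde{\ZZ}\id U\rightarrow\tilde{\ZZ}U\tilde{\ZZ}U$ described in the introduction). Since whiskering a simplicial natural transformation on either side by a simplicial functor again yields a simplicial natural transformation, and since $\varepsilon$, $\eta$ are simplicial by the previous step while $U$, $\tilde{\ZZ}$ are simplicial functors, the multiplication and comultiplication are simplicial natural transformations, completing the proof. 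The one step requiring genuine verification, and hence the main obstacle, is the compatibility of $\eta$ and $\varepsilon$ with the structure maps $\sigma$; I expect this to reduce to a short diagram chase using the explicit descriptions of $\sigma$ in Proposition \ref{prop:useful_properties_of_the_adjunction}(a),(d) together with the naturality of the isomorphism in part (b).
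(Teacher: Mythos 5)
Your argument is correct. Note that the paper itself offers no proof here: it declares the statement ``an exercise left to the reader'' and points to the analogous verification in Ching--Harper, so there is no written argument to compare against; your write-up is a legitimate filling-in of that exercise. The decomposition you use---composites of simplicial functors are simplicial, the unit and counit of a simplicial adjunction are simplicial natural transformations, and whiskering a simplicial natural transformation by simplicial functors preserves simpliciality---is the standard enriched-category-theoretic route and handles all four transformations of \eqref{eq:TQ_homology_spectrum_functor_natural_transformations} cleanly. The only point deserving explicit care, which you correctly isolate, is that Proposition \ref{prop:useful_properties_of_the_adjunction}(b) as stated asserts ordinary naturality of the isomorphism $\Hombold(\tilde{\ZZ}X,Y)\Iso\Hombold(X,UY)$; to invoke the ``unit and counit of an enriched adjunction are enriched natural'' fact one should check that this isomorphism is compatible with the tensoring, i.e.\ that it is induced by the structure map $\sigma$ of part (a) under the identifications of Remark \ref{rem:useful_adjunction_isomorphisms_simplicial_structure}. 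That check is a short diagram chase with the explicit $\sigma$ of Proposition \ref{prop:useful_properties_of_the_adjunction}(a),(d), exactly as you anticipate, so no gap remains.
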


\begin{proof}
This is an exercise left to the reader; compare \cite[Proof of 3.16]{Ching_Harper_derived_Koszul_duality}.
\end{proof}

\section{Background on homotopy limits of $\Delta$-shaped diagrams}
\label{sec:homotopy_limit_towers}

The purpose of this section is to recall some well-known constructions and properties associated to the homotopy limit of $\Delta$-shaped diagrams; the expert may wish to skim through, or skip entirely, this background section.

\begin{rem}
From now on in this section, we assume that $\M$ is the simplicial model category $\Space_*$, $\sAb$, or $\sSet$ (see \cite[II.2]{Goerss_Jardine}) with tensor product $X\tensordot K$ in $\M$, mapping space $\Hombold(X,X')$ in $\sSet$, and mapping object $\hombold(K,X')$ in $\M$; here,  $X,X'\in\M$ and $K\in\sSet$.
\end{rem}

\begin{defn}
A cosimplicial object $Z\in\M^\Delta$ in $\M$ is \emph{coaugmented} if it comes with a map $\function{d^0}{Z^{-1}}{Z^0}$ in $\M$ such that $\function{d^0d^0=d^1d^0}{Z^{-1}}{Z^1}$; in this case, it follows easily from the cosimplicial identities (\cite[I.1]{Goerss_Jardine}) that $d^0$ induces a map
$Z^{-1}\rarrow Z$ of $\Delta$-shaped diagrams in $\M$, where $Z^{-1}$ denotes the constant cosimplicial object with value $Z^{-1}$; i.e., via the inclusion $Z^{-1}\in\M\subset\M^\Delta$ of constant diagrams.
\end{defn}

We follow Dror-Dwyer \cite[3.3]{Dror_Dwyer_long_homology}
in use of the terms \emph{restricted cosimplicial objects} for $\Delta_\res$-shaped diagrams, and \emph{restricted simplicial category} $\Delta_\res$ to denote the subcategory of $\Delta$ with objects the totally ordered sets $[n]$ for $n\geq 0$ and morphisms the strictly monotone maps of sets $\function{\xi}{[n]}{[n']}$; i.e., such that $k<l$ implies $\xi(k)<\xi(l)$.

\begin{defn}
\label{defn:totalization_and_restricted_totalization}
The \emph{totalization} functor $\Tot$ for cosimplicial objects in $\M$ and the \emph{restricted totalization} (or fat totalization) functor $\Tot^\res$ for restricted cosimplicial objects in $\M$ are defined objectwise by the ends
\begin{align*}
  \function{\Tot}{\M^\Delta}{\M},\quad\quad
  &X\mapsto\hombold(\Delta[-],X)^\Delta\\
  \function{\Tot^\res}{\M^{\Delta_\res}}{\M},\quad\quad
  &Y\mapsto\hombold(\Delta[-],Y)^{\Delta_\res}
\end{align*}
We often drop the adjective ``restricted'' and simply refer to both functors as \emph{totalization} functors. It follows from the universal property of ends that $\Tot(X)$ is naturally isomorphic to an equalizer diagram of the form
\begin{align*}
  \Tot(X)&\Iso
  \lim\Bigl(
  \xymatrix{
    \prod\limits_{[n]\in\Delta}\hombold(\Delta[n],X^n)
    \ar@<0.5ex>[r]\ar@<-0.5ex>[r] &
    \prod\limits_{\substack{[n]\rightarrow [n']\\ \text{in}\,\Delta}}\hombold(\Delta[n],X^{n'})
  }
  \Bigr)
\end{align*}
in $\M$, and similarly for $\Tot^\res(Y)$ by replacing $\Delta$ with $\Delta_\res$.  We sometimes refer to the natural maps $\Tot(X)\rarrow\hombold(\Delta[n],X^n)$ and $\Tot^\res(Y)\rarrow\hombold(\Delta[n],Y^n)$ as the \emph{projection} maps.
\end{defn}

\begin{prop}
\label{prop:tot_adjunctions}
The totalization functors $\Tot$ and $\Tot^\res$ fit into adjunctions
\begin{align}
\label{eqref:tot_adjunctions}
\xymatrix{
  \M\ar@<0.5ex>[r]^-{-\tensordot\Delta[-]} &
  \M^\Delta\ar@<0.5ex>[l]^-{\Tot}
},\quad\quad
\xymatrix{
  \M\ar@<0.5ex>[r]^-{-\tensordot\Delta[-]} &
  \M^{\Delta_\res}\ar@<0.5ex>[l]^-{\Tot^\res}
}
\end{align}
with left adjoints on top.
\end{prop}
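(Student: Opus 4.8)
The plan is to exhibit, for each $X\in\M$ and each $Z\in\M^\Delta$, a bijection
\[
  \hom_{\M^\Delta}(X\tensordot\Delta[-],Z)\Iso\hom_\M(X,\Tot Z)
\]
natural in $X$ and $Z$; the second adjunction is then proved by the identical argument with $\Delta$ replaced by $\Delta_\res$ throughout (and $\Tot$ by $\Tot^\res$), using that $-\tensordot\Delta[-]$ restricts to a functor $\M\rarrow\M^{\Delta_\res}$, so I would only write out the first in detail.

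The cleanest route is a short chain of natural isomorphisms built from end calculus. First I would use the standard end formula for hom-sets in a diagram category to write
\[
  \hom_{\M^\Delta}(X\tensordot\Delta[-],Z)\Iso
  \catend_{[n]\in\Delta}\hom_\M(X\tensordot\Delta[n],Z^n),
\]
i.e. a morphism of cosimplicial objects is exactly a family $\{f^n\colon X\tensordot\Delta[n]\rarrow Z^n\}$ compatible with the cosimplicial structure maps. Next, applying the simplicial adjunction isomorphism $\hom_\M(X\tensordot\Delta[n],Z^n)\Iso\hom_\M(X,\hombold(\Delta[n],Z^n))$ from Remark \ref{rem:useful_adjunction_isomorphisms_simplicial_structure}, naturally in $[n]$, inside the end gives
\[
  \catend_{[n]\in\Delta}\hom_\M(X\tensordot\Delta[n],Z^n)\Iso
  \catend_{[n]\in\Delta}\hom_\M(X,\hombold(\Delta[n],Z^n)).
\]
Finally, since $\hom_\M(X,-)$ preserves limits and in particular ends, I can pull $X$ out of the end:
\[
  \catend_{[n]\in\Delta}\hom_\M(X,\hombold(\Delta[n],Z^n))\Iso
  \hom_\M\bigl(X,\catend_{[n]\in\Delta}\hombold(\Delta[n],Z^n)\bigr)=\hom_\M(X,\Tot Z),
\]
the last equality being the definition of $\Tot Z$ as the end $\hombold(\Delta[-],Z)^\Delta$ (Definition \ref{defn:totalization_and_restricted_totalization}). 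Composing the three displayed isomorphisms yields the desired adjunction bijection.

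Concretely, one can unwind this to the equalizer description already recorded in Definition \ref{defn:totalization_and_restricted_totalization}: a map $X\rarrow\Tot Z$ is a family $\{g^n\colon X\rarrow\hombold(\Delta[n],Z^n)\}$ equalizing the two maps into $\prod_{[n]\rarrow[n']}\hombold(\Delta[n],Z^{n'})$ induced by $\hombold(\Delta[\xi],Z^{n'})$ and $\hombold(\Delta[n],\xi_*)$, and this is precisely the adjunct of the cosimplicial-naturality condition on the family $\{f^n\}$. The main obstacle---really the only point requiring care---is the bookkeeping in this unwinding: one must confirm that the two legs of the end/equalizer diagram correspond, under the termwise adjunction of Remark \ref{rem:useful_adjunction_isomorphisms_simplicial_structure}, to the two composites in the $\xi$-naturality square for $f$, which comes down to the bifunctoriality of $\hombold(-,-)$ and the naturality of the simplicial adjunction isomorphism in both the simplicial-set variable and the target variable. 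Naturality of the final bijection in $X$ and $Z$ is then automatic, being assembled from natural isomorphisms at each stage.
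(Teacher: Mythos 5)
Your argument is correct: the chain of natural isomorphisms (hom in $\M^\Delta$ as an end, the termwise simplicial adjunction from Remark \ref{rem:useful_adjunction_isomorphisms_simplicial_structure}, and the fact that $\hom_\M(X,-)$ preserves ends) establishes the adjunction bijection, and the restricted case follows verbatim with $\Delta_\res$ in place of $\Delta$. The paper states this proposition without proof, treating it as a standard consequence of the simplicial structure and the end description of $\Tot$, and your write-up is precisely the standard argument it is implicitly relying on.
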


\begin{defn}
Let $\DD$ be a small category. The \emph{Bousfield-Kan homotopy limit} $\holim_\DD^\BK$ for $\DD$-shaped diagrams in $\M$ is defined by, $
  \function{\holim\nolimits_\DD^\BK}{\M^\DD}{\M}$,
  $X\mapsto\Tot\xymatrix{\prod\nolimits^*_\DD} X$.
We will sometimes suppress $\DD$ from the notation and simply write $\holim^\BK$ and $\prod^*$. Here, the cosimplicial replacement functor $\function{\prod\nolimits^*}{\M^{\DD}}{\M^\Delta}$ is defined objectwise by (with the obvious coface $d^i$ and codegeneracy maps $s^j$)
\begin{align*}
\xymatrix{
  \prod^n X:= \prod\limits_{\substack{a_0\rightarrow\cdots\rightarrow a_n\\ \text{in}\,\DD}}X(a_n)
}
\end{align*}
\end{defn}

The simplicial category $\Delta$ has a natural filtration by its truncated subcategories $\Delta^{\leq n}$ of the form
$
  \emptyset\subset\Delta^{\leq 0}\subset\Delta^{\leq 1}
  \subset\cdots\subset\Delta^{\leq{n}}\subset
  \Delta^{\leq n+1}\subset\cdots\subset\Delta
$
where $\Delta^{\leq n}\subset\Delta$ denotes the full subcategory of objects $[m]$ such that $m\leq n$; we use the convention that $\Delta^{\leq -1}=\emptyset$ is the empty category. This leads to the following $\holim^\BK$ tower of a $\Delta$-shaped diagram in $\M$.

\begin{prop}
\label{prop:holim_tower_with_truncated_delta_filtration}
If $X\in\M^\Delta$, then $\holim\nolimits^\BK_\Delta X$ is naturally isomorphic to a limit of the form
$
  \holim\nolimits^\BK_\Delta X \Iso\lim
  \bigl({*}\larrow
  \holim\nolimits^\BK_{\Delta^{\leq 0}}X\larrow
  \holim\nolimits^\BK_{\Delta^{\leq 1}}X\larrow
  \holim\nolimits^\BK_{\Delta^{\leq 2}}X\larrow
  \cdots\bigr)
$
in $\M$; here, it may be helpful to note that 
$\holim\nolimits^\BK_{\Delta^{\leq -1}} X=*$ and $\holim\nolimits^\BK_{\Delta^{\leq 0}} X\Iso X^0$.
\end{prop}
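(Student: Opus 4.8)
The plan is to reduce the statement to the fact that $\Tot$ is a right adjoint (Proposition~\ref{prop:tot_adjunctions}), hence preserves all small limits, together with a degreewise identification of cosimplicial replacements. Recall that by definition $\holim^\BK_\DD X=\Tot(\prod^*_\DD X)$. First I would observe that the full inclusions of truncated subcategories $\Delta^{\leq n-1}\subset\Delta^{\leq n}\subset\Delta$ induce, by restriction of the diagram $X$ and functoriality of the cosimplicial replacement, natural maps of cosimplicial objects $\prod^*_\Delta X\rightarrow\prod^*_{\Delta^{\leq n}}X\rightarrow\prod^*_{\Delta^{\leq n-1}}X$ in $\M^\Delta$. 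In each cosimplicial degree $m$ these are simply projections of products: $\prod^m_{\Delta^{\leq n}}X=\prod_{a_0\to\cdots\to a_m\ \mathrm{in}\ \Delta^{\leq n}}X(a_m)$ is the subproduct of $\prod^m_\Delta X$ indexed by those $m$-chains all of whose objects have dimension $\leq n$, and the transition map forgets the factors indexed by a chain meeting some $[k]$ with $k=n$. Applying $\Tot$ to this tower produces the $\holim^\BK$ tower together with the comparison map out of $\holim^\BK_\Delta X$.

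The key step is then to identify $\prod^*_\Delta X$ with the inverse limit of the tower $\{\prod^*_{\Delta^{\leq n}}X\}_n$ in $\M^\Delta$. Since limits in the cosimplicial category $\M^\Delta$ are computed degreewise, it suffices to check this in each cosimplicial degree $m$. Writing $C^m_n$ for the set of $m$-chains in $\Delta^{\leq n}$, the transition maps are the projections of products along the nested inclusions $C^m_0\subset C^m_1\subset\cdots$, and the inverse limit of a tower of products whose transition maps are such projections is the product indexed by the union $\bigcup_n C^m_n$. Every $m$-chain $a_0\to\cdots\to a_m$ in $\Delta$ uses only finitely many objects, each of finite dimension, and so lies in $\Delta^{\leq n}$ for all sufficiently large $n$; hence $\bigcup_n C^m_n$ is exactly the set of all $m$-chains in $\Delta$, and this product is $\prod^m_\Delta X$. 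Therefore $\prod^*_\Delta X\Iso\lim_n\prod^*_{\Delta^{\leq n}}X$, naturally in $X$.

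Putting this together, I would invoke that $\Tot$, being a right adjoint, commutes with the sequential inverse limit over $n$, so that
\[
  \holim\nolimits^\BK_\Delta X=\Tot\bigl(\textstyle\prod^*_\Delta X\bigr)\Iso\Tot\bigl(\lim\nolimits_n\textstyle\prod^*_{\Delta^{\leq n}}X\bigr)\Iso\lim\nolimits_n\Tot\bigl(\textstyle\prod^*_{\Delta^{\leq n}}X\bigr)=\lim\nolimits_n\holim\nolimits^\BK_{\Delta^{\leq n}}X,
\]
which is the asserted natural isomorphism (adjoining the terminal object $*=\holim^\BK_{\Delta^{\leq-1}}X$ at the bottom of the tower does not change the limit). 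The endpoint identifications are direct: $\Delta^{\leq-1}=\emptyset$ gives $\prod^*_\emptyset X$ constant at the terminal object, so $\holim^\BK_{\Delta^{\leq-1}}X=*$; and $\Delta^{\leq0}$ has the single object $[0]$ with only its identity morphism, so every chain is constant, $\prod^m_{\Delta^{\leq0}}X=X^0$ for all $m$, and $\Tot$ of this constant cosimplicial object returns $X^0$, giving $\holim^\BK_{\Delta^{\leq0}}X\Iso X^0$.

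The only genuinely substantive point, and hence the expected main obstacle, is the degreewise limit interchange in the second paragraph: one must confirm both that the transition maps really are projections of products (so that the inverse limit is the product over the union of indexing sets) and that these indexing sets exhaust all chains of $\Delta$ because each chain is supported on finitely many objects of bounded dimension. Once this is in hand, everything else is formal, resting on $\Tot$ being a right adjoint and on limits commuting with limits.
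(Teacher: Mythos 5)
Your argument is correct: the paper states this proposition without proof, and your route — identifying the transition maps of the cosimplicial replacements degreewise as projections of products, noting that every $m$-chain of $\Delta$ lies in some $\Delta^{\leq n}$ so the inverse limit is the product over all chains, and then commuting $\Tot$ (a right adjoint by Proposition \ref{prop:tot_adjunctions}) past the sequential limit — is exactly the standard argument the surrounding filtration discussion is pointing at. The endpoint identifications $\holim^\BK_{\Delta^{\leq -1}}X=*$ and $\holim^\BK_{\Delta^{\leq 0}}X\iso X^0$ are also verified correctly.
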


\begin{defn}
\label{defn:totalization_functors}
Let $s\geq -1$. The functors $\Tot_s$ and $\Tot^\res_s$ are defined objectwise by the ends
\begin{align*}
  \function{\Tot_s}{\M^\Delta}{\M},\quad\quad
  &X\mapsto\hombold(\Sk_s\Delta[-],X)^\Delta\\
  \function{\Tot^\res_s}{\M^{\Delta_\res}}{\M},\quad\quad
  &Y\mapsto\hombold(\Sk_s\Delta[-],Y)^{\Delta_\res}
\end{align*}
Here we use the convention that the $(-1)$-skeleton of a simplicial set is the empty simplicial set. In particular, $\Sk_{-1}\Delta[n]=\emptyset$ for each $n\geq 0$; it follows immediately that $\Tot_{-1}(X)=*$ and $\Tot^\res_{-1}(Y)=*$.
\end{defn}

\begin{prop}
\label{prop:comparing_holim_with_Tot}
If $Y\in\M^\Delta$ is Reedy fibrant, then the natural maps
$
  \Tot Y\xrightarrow{\wequiv}\holim\nolimits_\Delta^\BK Y$ and 
$\Tot_n Y\xrightarrow{\wequiv}
  \holim\nolimits_{\Delta^{\leq n}}^\BK Y$
in $\M$ are weak equivalences.
\end{prop}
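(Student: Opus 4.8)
The plan is to identify both sides as weighted limits (cotensored ends over $\Delta$) and to reduce the statement to comparing the two weights in play. Recall from Proposition \ref{prop:tot_adjunctions} that $\Tot$ is right adjoint to $-\tensordot\Delta[-]\colon\M\rarrow\M^\Delta$. First I would observe that this is a Quillen adjunction when $\M^\Delta$ carries the Reedy model structure: the cosimplicial simplicial set $\Delta[-]$ is Reedy cofibrant, its $m$-th latching map being the boundary monomorphism $\partial\Delta[m]\rarrow\Delta[m]$, so $-\tensordot\Delta[-]$ carries (acyclic) cofibrations of $\M$ to Reedy (acyclic) cofibrations and hence $\Tot$ is right Quillen. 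Consequently $\Tot$ preserves weak equivalences between Reedy fibrant objects, and for Reedy fibrant $Y$ the value $\Tot Y$ models the total right derived functor of $\lim_\Delta$, i.e. the homotopy limit. The same argument applies to $\Tot_n$, using that $\Sk_n\Delta[-]$ is likewise Reedy cofibrant (it is assembled from the cofibrant simplices $\Delta[k]$ with $k\leq n$).

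Next I would write both constructions explicitly as weighted limits over $\Delta$. On the one hand $\Tot Y=\hombold(\Delta[-],Y)^\Delta$ uses the weight $[m]\mapsto\Delta[m]$; on the other, unwinding the cosimplicial replacement, $\holim^\BK_\Delta Y=\Tot\prod^* Y$ is the weighted limit with the Bousfield--Kan weight $[m]\mapsto N([m]\downarrow\Delta)$, the nerve of the category of objects under $[m]$ (the standard identification, \cite[X.4]{Bousfield_Kan}, \cite[18.7]{Hirschhorn}). Both weights are Reedy cofibrant and both are objectwise contractible: $\Delta[m]$ is contractible, and $[m]\downarrow\Delta$ has the initial object $\id_{[m]}$, so its nerve is contractible. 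Thus each weight is a Reedy cofibrant replacement of the constant weight $\ast$, and the comparison map of the proposition is induced by a map of weights between them; since any map between objectwise contractible, objectwise cofibrant cosimplicial spaces is automatically a Reedy weak equivalence, applying the two-variable right adjoint $\hombold(-,Y)^\Delta$ to the Reedy fibrant $Y$ (the pushout-product/Quillen two-variable property for cotensored ends) yields the first weak equivalence $\Tot Y\wequiv\holim^\BK_\Delta Y$.

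For the truncated statement I would run the identical comparison, now recognizing $\Tot_n$ as a weighted limit presenting the homotopy limit over $\Delta^{\leq n}$. The point is that $\Sk_n\Delta[-]$ is the left Kan extension along $\Delta^{\leq n}\subset\Delta$ of the restricted cosimplicial simplices, so that $\Tot_n Y=\hombold(\Sk_n\Delta[-],Y)^\Delta$ depends only on the $n$-truncation of $Y$ and equals the weighted limit over $\Delta^{\leq n}$ with weight $[k]\mapsto\Delta[k]$ for $k\leq n$. Comparing this against the Bousfield--Kan weight $[k]\mapsto N([k]\downarrow\Delta^{\leq n})$ over $\Delta^{\leq n}$ --- again both Reedy cofibrant and objectwise contractible, since $[k]\downarrow\Delta^{\leq n}$ still has the initial object $\id_{[k]}$ --- gives $\Tot_n Y\wequiv\holim^\BK_{\Delta^{\leq n}}Y$ by the same two-variable argument.

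The main obstacle is the bookkeeping in the truncated case: verifying carefully that the skeletal weight $\Sk_n\Delta[-]$ on the full category $\Delta$ correctly presents the homotopy limit over the truncated category $\Delta^{\leq n}$ (equivalently, that the end over $\Delta$ against $\Sk_n\Delta[-]$ collapses to an end over $\Delta^{\leq n}$), together with confirming that the weighted-limit bifunctor really does take weak equivalences between Reedy cofibrant weights to weak equivalences when evaluated on Reedy fibrant diagrams. Both facts are standard (\cite[X.4]{Bousfield_Kan}, \cite[18.7, 19.6]{Hirschhorn}), but they are precisely the steps where the Reedy machinery is genuinely used; the remainder is formal.
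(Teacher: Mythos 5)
Your argument is correct and is essentially the paper's own proof: the paper likewise identifies $\holim^\BK_\Delta Y$ as the weighted limit $\hombold(B(\Delta/-),Y)^\Delta$, observes that the comparison with $\Tot Y=\hombold(\Delta[-],Y)^\Delta$ is induced by a weak equivalence between Reedy cofibrant, objectwise contractible weights, and invokes the invariance of the cotensored end for Reedy fibrant $Y$ (\cite[XI.2.6, XI.4.4]{Bousfield_Kan}), with the truncated case handled by the same identification $\hombold(\Sk_n\Delta[-],Y)^\Delta\iso\hombold(\Delta^{\leq n}[-],Y)^{\Delta^{\leq n}}$ that you use. The only cosmetic difference is your additional framing via the Quillen adjunction $(-\tensordot\Delta[-],\Tot)$, which the paper does not need to spell out.
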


\begin{proof}
The left-hand map is the composite
\begin{align*}
  \hombold(\Delta[-],Y)^{\Delta}
  \xrightarrow{\wequiv}&\hombold(B(\Delta/-),Y)^\Delta
  \Iso\holim\nolimits^\BK_{\Delta}Y
\end{align*}
where the indicated map is a weak equivalence \cite[XI.4.4]{Bousfield_Kan} since it is induced by the natural map 
$\Delta[-]\xleftarrow{\wequiv}B(\Delta/-)$ in $(\sSet)^{\Delta}$, which itself is a weak equivalence between Reedy cofibrant objects \cite[XI.2.6]{Bousfield_Kan}; here, $B$ denotes the nerve functor. Similarly, the right-hand map is the composite
\begin{align*}
  \hombold(\Sk_n\Delta[-],Y)^\Delta
  \Iso&\hombold(\Delta^{\leq n}[-],Y)^{\Delta^{\leq n}}\\
  \xrightarrow
  {\wequiv}&\hombold(B(\Delta^{\leq n}/-),Y)^{\Delta^{\leq n}}\Iso\holim\nolimits^\BK_{\Delta^{\leq n}}Y
\end{align*}
where the indicated map is a weak equivalence \cite[XI.4.4]{Bousfield_Kan} since it is induced by the natural map 
$\Delta^{\leq n}[-]\xleftarrow{\wequiv}B(\Delta^{\leq n}/-)$ in $(\sSet)^{\Delta^{\leq n}}$, which itself is a weak equivalence between Reedy cofibrant objects \cite[XI.2.6]{Bousfield_Kan}.
\end{proof}

\begin{prop}
\label{prop:left_cofinal_delta_restricted_to_delta}
The inclusion of categories $\Delta_\res\subset\Delta$ is left cofinal; hence, if $X\in\M^\Delta$ is objectwise fibrant, then the induced map
$
  \holim^\BK\nolimits_{\Delta_\res}X\xleftarrow{\wequiv}
  \holim^\BK\nolimits_{\Delta}X
$
is a weak equivalence.
\end{prop}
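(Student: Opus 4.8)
The plan is to separate the two assertions: left cofinality of the inclusion $\iota\colon\Delta_\res\hookrightarrow\Delta$ is the real content, and the displayed weak equivalence is then a formal consequence of Proposition~\ref{prop:cofinality_induces_weak_equivalence} (i.e.\ \cite[XI.9.2]{Bousfield_Kan}). For this last step I would observe that left cofinality is a property of the indexing categories alone, so although Proposition~\ref{prop:cofinality_induces_weak_equivalence} is phrased for pointed spaces, the same conclusion holds for $\M\in\{\Space_*,\sAb,\sSet\}$ with $X$ objectwise fibrant---either because the Bousfield-Kan argument via $\Tot\prod^*$ is formal in any simplicial model category, or by transporting along the (homotopy-limit-preserving) forgetful functors down to simplicial sets.

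For the cofinality itself I would use the standard criterion (Quillen's Theorem A, equivalently \cite[XI.9.1]{Bousfield_Kan}): $\iota$ is left cofinal provided that, for every $[n]\in\Delta$, the nerve of the comma category $(\iota\downarrow[n])$ is contractible. The first step is to unwind this category. Its objects are order-preserving maps $u\colon[m]\to[n]$ with $[m]\in\Delta_\res$ (equivalently, nonempty weakly increasing sequences $u(0)\le\cdots\le u(m)$ in $\{0,\dots,n\}$), and a morphism $([m],u)\to([m'],u')$ is an injective order-preserving $v$ with $u'v=u$, that is, an exhibition of $u$ as a subsequence of $u'$. Note that morphisms strictly increase the number of vertices, so this category has neither an initial nor a terminal object; this is exactly the obstruction to a one-line proof, and it is the step I expect to require the most care. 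In particular, one cannot simply point to $([n],\id)$, since a non-injective $u$ admits no morphism to it.

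To get around this I would build a zigzag of natural transformations connecting the identity endofunctor to a constant functor. Define the endofunctor $F$ of $(\iota\downarrow[n])$ by ``appending the top vertex'': $F([m],u)=([m+1],u^+)$, where $u^+$ restricts to $u$ on $[m]$ and $u^+(m+1)=n$, and $F(v)=v^+$ is the evident extension fixing the new final vertex. The front inclusion $[m]\hookrightarrow[m+1]$ furnishes a natural transformation $\id\Rightarrow F$ (since $u^+$ restricts to $u$), and selection of the new final vertex furnishes a natural transformation $\mathrm{const}_{(n)}\Rightarrow F$ from the constant functor at the one-vertex object $(n)=([0],n\colon[0]\to[n])$ (since that vertex always carries the value $n$). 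The key verification is naturality of this second transformation against subsequence inclusions, which holds precisely because $F$ carries the final vertex of the source to the final vertex of the target. Since a natural transformation induces a homotopy on nerves, both $B(\id)$ and $B(\mathrm{const}_{(n)})$ are homotopic to $B(F)$, hence to each other; thus $\id_{B(\iota\downarrow[n])}$ is homotopic to a constant map and $(\iota\downarrow[n])$ is contractible. This establishes left cofinality and completes the proof.
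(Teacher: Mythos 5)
Your proof is correct, but it does more work than the paper does: the paper's entire proof is two citations --- it quotes left cofinality of $\Delta_\res\subset\Delta$ from Dror--Dwyer \cite[3.17]{Dror_Dwyer_long_homology} and then invokes \cite[XI.9.2]{Bousfield_Kan} (Proposition \ref{prop:cofinality_induces_weak_equivalence}) for the induced weak equivalence on $\holim^\BK$. Your overall decomposition (cofinality is the content; the weak equivalence is formal) matches the paper exactly, but you supply a self-contained proof of the cofinality that the paper outsources. Your argument is the standard one and checks out: the comma category $(\iota\downarrow[n])$ has as objects all monotone $u\colon[m]\to[n]$ with injective monotone maps over $[n]$ as morphisms; your ``append the top vertex'' endofunctor $F$ is well defined ($u(m)\le n$ guarantees $u^+$ is monotone, and $v^+$ is injective monotone with $(u')^+v^+=u^+$), the front inclusion gives $\id\Rightarrow F$, and selecting the new last vertex gives $\mathrm{const}_{(n)}\Rightarrow F$ with naturality holding because $v^+$ sends last vertex to last vertex; the zigzag then null-homotopes the identity of the (nonempty) nerve. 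One cosmetic slip: non-identity morphisms strictly increase the number of vertices (the identity does not), but nothing in your argument depends on that remark. What your route buys is a proof readable without chasing the Dror--Dwyer reference; what the paper's route buys is brevity, plus the observation that Proposition \ref{prop:cofinality_induces_weak_equivalence} applies verbatim --- your closing remark that the Bousfield--Kan cofinality theorem holds in any of the simplicial model categories $\Space_*$, $\sAb$, $\sSet$ (for objectwise fibrant diagrams) is the right justification and is implicitly what the paper is using as well.
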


\begin{proof}
The inclusion $\Delta_\res\subset\Delta$ is left cofinal by \cite[3.17]{Dror_Dwyer_long_homology}, hence the induced map on $\holim^\BK$ is a weak equivalence by \cite[XI.9.2]{Bousfield_Kan} (Proposition \ref{prop:cofinality_induces_weak_equivalence}). 
\end{proof}

\begin{prop}
\label{prop:tot_restricted_compared_with_holim_delta_restricted}
If $X\in\M^{\Delta_\res}$ is objectwise fibrant, then the natural map (in $\M$)
$
  \Tot^\res X\xrightarrow{\wequiv}
  \holim\nolimits_{\Delta_\res}^\BK X
$
is a weak equivalence.
\end{prop}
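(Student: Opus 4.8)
The plan is to run the argument of Proposition \ref{prop:comparing_holim_with_Tot} essentially verbatim, with the restricted category $\Delta_\res$ in place of $\Delta$. First I would exhibit the natural comparison map as the composite
\begin{align*}
  \Tot^\res X = \hombold(\Delta[-],X)^{\Delta_\res}
  \xrightarrow{\wequiv}\hombold(B(\Delta_\res/-),X)^{\Delta_\res}
  \Iso\holim\nolimits^\BK_{\Delta_\res}X,
\end{align*}
where the right-hand isomorphism is the general identification of $\holim^\BK_{\Delta_\res}$ with the indicated end over $\Delta_\res$ (a formal property of the homotopy limit built via cosimplicial replacement; compare \cite[XI.4.4]{Bousfield_Kan}), and the left-hand map is induced by the natural comparison map $B(\Delta_\res/-)\rightarrow\Delta[-]$ of $\Delta_\res$-shaped diagrams in $\sSet$.

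Next I would check that $\Delta[-]\xleftarrow{\wequiv}B(\Delta_\res/-)$ is an objectwise weak equivalence. At each object $[n]$ the over-category $\Delta_\res/[n]$ has the identity $\id_{[n]}$ as terminal object---any object $\function{\xi}{[k]}{[n]}$ of $\Delta_\res/[n]$ admits the unique morphism $\xi$ to $\id_{[n]}$---so its nerve $B(\Delta_\res/[n])$ is contractible; since $\Delta[n]$ is contractible as well, the map $B(\Delta_\res/[n])\rightarrow\Delta[n]$ is a weak equivalence.

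To transport this across $\hombold(-,X)^{\Delta_\res}$ I would use two observations. First, since every non-identity morphism of $\Delta_\res$ strictly raises the degree $[n]\mapsto n$, the category $\Delta_\res$ is a direct Reedy category; hence its matching objects are trivial and objectwise fibrant coincides with Reedy fibrant, so the hypothesis that $X$ is objectwise fibrant makes $X$ Reedy fibrant. Second, both $\Delta[-]$ and $B(\Delta_\res/-)$ are Reedy cofibrant objects of $(\sSet)^{\Delta_\res}$; for $\Delta[-]$ this is inherited from \cite[XI.2.6]{Bousfield_Kan}, since the $\Delta_\res$-latching object at $[n]$ is computed from the proper faces $[m]\hookrightarrow[n]$ alone and hence agrees with the corresponding latching object over $\Delta$. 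With these in hand, applying $\hombold(-,X)^{\Delta_\res}$ to the objectwise weak equivalence $\Delta[-]\xleftarrow{\wequiv}B(\Delta_\res/-)$ between Reedy cofibrant objects yields a weak equivalence, arguing exactly as in \cite[XI.4.4]{Bousfield_Kan} (compare Proposition \ref{prop:comparing_holim_with_Tot}), which completes the proof.

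The step I expect to require the most care is the Reedy cofibrancy of $B(\Delta_\res/-)$ over $\Delta_\res$: here the latching map at $[n]$ is the inclusion of $\colim_{[m]\hookrightarrow[n]}B(\Delta_\res/[m])$, over the proper faces, into $B(\Delta_\res/[n])$, and I would verify it is a monomorphism of simplicial sets by identifying this colimit with the subcomplex of $B(\Delta_\res/[n])$ spanned by those simplices factoring through a proper face---precisely the argument underlying \cite[XI.2.6]{Bousfield_Kan}. Everything else is formal bookkeeping parallel to the unrestricted case.
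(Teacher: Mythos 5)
Your proposal is correct and follows the paper's own proof essentially verbatim: the paper likewise exhibits the map as the composite $\hombold(\Delta[-],X)^{\Delta_\res}\rightarrow\hombold(B(\Delta_\res/-),X)^{\Delta_\res}\iso\holim^\BK_{\Delta_\res}X$ and cites \cite[XI.4.4]{Bousfield_Kan} for the weak equivalence induced by $\Delta[-]\xleftarrow{\wequiv}B(\Delta_\res/-)$ in $(\sSet)^{\Delta_\res}$. Your additional verifications (terminality of $\id_{[n]}$ in $\Delta_\res/[n]$, directness of $\Delta_\res$ so that objectwise fibrant suffices, and Reedy cofibrancy of the two diagrams) are correct elaborations of details the paper leaves implicit.
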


\begin{proof}
This is the same as in Proposition \ref{prop:comparing_holim_with_Tot}, except here the  map is the composite
\begin{align*}
  \hombold(\Delta[-],X)^{\Delta_\res}
  \xrightarrow{\wequiv}&\hombold(B(\Delta_\res/-),X)^{\Delta_\res}
  \Iso\holim\nolimits^\BK_{\Delta_\res}X
\end{align*}
where the indicated map is a weak equivalence \cite[XI.4.4]{Bousfield_Kan} since it is induced by the natural map 
$\Delta[-]\xleftarrow{\wequiv}B(\Delta_\res/-)$ in $(\sSet)^{\Delta_\res}$ .
\end{proof}

\begin{defn}
\label{defn:homotopy_limit_derived}
Let $\DD$ be a small category. The \emph{homotopy limit} functor $\holim_\DD$ for $\DD$-shaped diagrams is defined objectwise by
$
  \function{\holim\nolimits_\DD}{\M^\DD}{\M}$, 
$X\mapsto \holim\nolimits_\DD^\BK X^f$,
where $X^f$ denotes a functorial objectwise fibrant replacement of $X$ in $\M^\DD$. In other words \cite[XI.3, XI.8]{Bousfield_Kan}, there is a natural weak equivalence 
$\holim\nolimits_\DD X\wequiv\RR\holim\nolimits^\BK_\DD X$
and if furthermore, $X$ is objectwise fibrant, then $\holim\nolimits_\DD X\wequiv\holim\nolimits^\BK_\DD X$; here, we have denoted by $\RR\holim^\BK_\DD$ the total right derived functor of $\holim^\BK_\DD$.
\end{defn}

\section{The homotopy theory of $\K$-coalgebras}
\label{sec:homotopy_theory_K_coalgebras}

In this section we recall briefly the Arone-Ching enrichments and associated homotopy theory of $\K$-coalgebras \cite[Section 1]{Arone_Ching_classification} in the context needed for this paper. Compare also with the context in \cite{Ching_Harper_derived_Koszul_duality}, but note that here the Arone-Ching enrichments are considerably simpler since every object in $\Space_*$ is cofibrant and every object in $\sAb$ is fibrant in the underlying category $\Space_*$.

\begin{defn}
\label{defn:model_cat_coAlgK_language} A morphism in $\coAlgK$ is a \emph{cofibration} if the underlying morphism in $\sAb$ is a cofibration. An object $Y$ in $\coAlgK$ is \emph{cofibrant} if the unique map $\emptyset\rarrow Y$ in $\coAlgK$ is a cofibration.
\end{defn}

\begin{rem}
\label{rem:initial_object_K_coalgebras}
In $\coAlgK$ the initial object $\emptyset$ and the terminal object $*$ are isomorphic. Here, the terminal object is the trivial $\K$-coalgebra with underlying object $0$. This is because there is an adjunction
$
\xymatrix@1{
  \coAlgK\ar@<0.5ex>[r] &
  \sAb\!:\K\ar@<0.5ex>[l] 
}
$
with $\K$ right adjoint to the forgetful functor on top, together with the fact that right adjoints preserve terminal objects, and the calculation that $\K0=\tilde{\ZZ}(*)=0$.
\end{rem}

Recall that a morphism of $\K$-coalgebras from $Y$ to $Y'$ is a map $\function{f}{Y}{Y'}$ in $\sAb$ that respects the $\K$-coaction; i.e., such that $(\K f)m=mf$. This motivates the following cosimplicial resolution of $\K$-coalgebra maps from $Y$ to $Y'$.

\begin{defn}
\label{defn:derived_K_coalgebra_maps_second_attempt}
Let $Y,Y'$ be cofibrant $\K$-coalgebras. The cosimplicial object (in $\sSet$) $\Hombold_\sAb(Y,\K^\bullet Y')$  looks like (showing only the coface maps)
\begin{align}
\label{eq:derived_K_coalgebra_maps_second_attempt}
\xymatrix{
  \Hombold_\sAb(Y,Y')\ar@<0.5ex>[r]^-{d^0}\ar@<-0.5ex>[r]_-{d^1} &
  \Hombold_\sAb(Y,\K Y')
  \ar@<1.0ex>[r]\ar[r]\ar@<-1.0ex>[r] &
  \Hombold_{\sAb}(Y,\K\K Y')\cdots
}
\end{align}
and is defined objectwise by $\Hombold_\sAb(Y,\K^\bullet Y')^n:=\Hombold_\sAb(Y,\K^n Y')$ with the obvious coface and codegeneracy maps induced by the comultiplication and coaction maps, and counit map, respectively; see \cite[1.3]{Arone_Ching_classification}.
\end{defn}

Recall the usual notion of realization of a simplicial set, regarded as taking values in the category of compactly generated Hausdorff spaces, denoted $\CGHaus$ (e.g., \cite{Goerss_Jardine}).

\begin{defn}
\label{defn:realization_sSet}
The \emph{realization} functor $|-|$ for simplicial sets is defined objectwise by the coend
$\function{|-|}{\sSet}{\CGHaus}$,
$X\mapsto X \times_{\Delta}\Delta^{(-)}$.
Here, $\Delta^n$ in $\CGHaus$ denotes the topological standard $n$-simplex for each $n\geq 0$ (see Goerss-Jardine \cite[I.1.1]{Goerss_Jardine}).
\end{defn}

\begin{defn}
Let $X,Y$ be pointed spaces. The mapping space $\Map_{\Space_*}(X,Y)$ in $\CGHaus$ is defined by realization
$
  \Map_{\Space_*}(X,Y)
  :=|\Hombold_{\Space_*}(X,Y)|
$
of the indicated simplicial set.
\end{defn}

The following definition of the mapping space of derived $\K$-coalgebra maps appears in Arone-Ching \cite[1.10]{Arone_Ching_classification} and is a key ingredient in both the statements and proofs of our main results.

\begin{defn}
\label{defn:mapping_spaces_of_derived_K_coalgebras}
Let $Y,Y'$ be cofibrant $\K$-coalgebras. The \emph{mapping spaces} of derived $\K$-coalgebra maps $\Hombold_{\coAlgK}(Y,Y')$ in $\sSet$ and $\Map_{\coAlgK}(Y,Y')$ in $\CGHaus$ are defined by the restricted totalizations
\begin{align*}
  \Hombold_{\coAlgK}(Y,Y')
  &:=\Tot^\res\Hombold_\sAb\bigl(Y,\K^\bullet Y'\bigr)\\
  \Map_{\coAlgK}(Y,Y')
  &:=\Tot^\res\Map_\sAb\bigl(Y,\K^\bullet Y'\bigr)
\end{align*}
of the indicated cosimplicial objects.
\end{defn}

Recall the following useful propositions.

\begin{prop}
\label{prop:tot_commutes_with_realization}
If $Y\in(\sSet)^{\Delta_\res}$ and $Z\in(\sSet)^\Delta$ are objectwise fibrant, then the natural maps
$|\Tot^\res Y|\xrightarrow{\wequiv}\Tot^\res|Y|$ and 
$|\holim\nolimits^\BK_\Delta Z|\xrightarrow{\wequiv}
  \holim\nolimits^\BK_\Delta |Z|$
in $\CGHaus$ are weak equivalences.
\end{prop}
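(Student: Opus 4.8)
The plan is to prove both weak equivalences simultaneously by transporting the question across the realization--singular adjunction $|-|\dashv\Sing$. I will use two properties of $\Sing\colon\CGHaus\rightarrow\sSet$: as a right adjoint it preserves all limits, in particular the ends defining $\Tot^\res$ and $\holim^\BK_\Delta$ and the cotensors out of which they are built; and it reflects weak equivalences, since a map in $\CGHaus$ is a weak equivalence exactly when its image under $\Sing$ is one. Both maps in the statement are instances of the canonical comparison map $|\limit_i X_i|\rightarrow\limit_i|X_i|$ associated to the functor $|-|$, so to prove each is a weak equivalence it suffices to show its image under $\Sing$ is a weak equivalence.

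First I would record the adjunction identifications. The degreewise adjunction gives a natural isomorphism $\Sing\bigl(X^{|K|}\bigr)\Iso\hombold(K,\Sing X)$ between $\Sing$ of the cotensor $X^{|K|}$ in $\CGHaus$ and the simplicial cotensor $\hombold(K,\Sing X)$; combined with preservation of ends (and of the products in the cosimplicial replacement $\prod^*$) this yields natural isomorphisms
\[
  \Sing\,\Tot^\res|Y|\Iso\Tot^\res\Sing|Y|,\qquad
  \Sing\,\holim\nolimits^\BK_\Delta|Z|\Iso\holim\nolimits^\BK_\Delta\Sing|Z|.
\]
Next, the unit $\eta\colon\id\rightarrow\Sing|-|$ supplies objectwise weak equivalences $Y\xrightarrow{\wequiv}\Sing|Y|$ and $Z\xrightarrow{\wequiv}\Sing|Z|$ of (restricted) cosimplicial simplicial sets between objectwise fibrant diagrams, since $\Sing$ carries fibrant spaces to Kan complexes. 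Because $\Tot^\res$ and $\holim^\BK_\Delta$ preserve objectwise weak equivalences between objectwise fibrant diagrams---this is precisely the homotopy-meaningfulness of the fat totalization and the Bousfield--Kan homotopy limit (Proposition \ref{prop:tot_restricted_compared_with_holim_delta_restricted} and Definition \ref{defn:homotopy_limit_derived})---applying them to $\eta$ gives weak equivalences $\Tot^\res Y\xrightarrow{\wequiv}\Tot^\res\Sing|Y|$ and $\holim^\BK_\Delta Z\xrightarrow{\wequiv}\holim^\BK_\Delta\Sing|Z|$.

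Finally I would close the argument by two-out-of-three. The units $\Tot^\res Y\xrightarrow{\wequiv}\Sing|\Tot^\res Y|$ and $\holim^\BK_\Delta Z\xrightarrow{\wequiv}\Sing|\holim^\BK_\Delta Z|$ are weak equivalences, as every object of $\sSet$ is cofibrant and $\Sing|-|$ lands in fibrant objects, so the unit of the Quillen equivalence $|-|\dashv\Sing$ is a weak equivalence on every simplicial set. Naturality of $\eta$ together with the construction of the comparison map identifies the composite $\Tot^\res Y\rightarrow\Sing|\Tot^\res Y|\xrightarrow{\Sing(\mathrm{comp})}\Sing\Tot^\res|Y|\Iso\Tot^\res\Sing|Y|$ with $\Tot^\res(\eta)$, and similarly for $\holim^\BK_\Delta$. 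Since the bottom map (the image of $\eta$) and the left map (the unit on the totalization) are weak equivalences, two-out-of-three forces $\Sing$ of each comparison map to be a weak equivalence, whence---as $\Sing$ reflects weak equivalences---the comparison maps themselves. The one step needing genuine care is this compatibility: checking that the canonical comparison map for $|-|$ is the mate that makes the naturality triangle commute, so that two-out-of-three actually applies; the remainder is bookkeeping with the adjunction and the already-established homotopy invariance of $\Tot^\res$ and $\holim^\BK$.
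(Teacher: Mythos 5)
Your argument is correct. For the record, the paper does not actually prove this proposition; it simply cites \cite[6.15, 8.2]{Ching_Harper_derived_Koszul_duality}, so you have supplied a self-contained argument where the paper defers to the literature. Your route---transport the comparison map across the adjunction $|-|\dashv\Sing$, use that $\Sing$ preserves the cotensors (via $\Sing(X^{|K|})\Iso\hombold(K,\Sing X)$, which needs the degreewise adjunction and $|K\times\Delta[n]|\Iso|K|\times|\Delta[n]|$, not merely preservation of limits) and the ends/products defining $\Tot^\res$ and $\holim^\BK_\Delta$, identify the resulting composite out of $\Tot^\res Y$ with $\Tot^\res(\eta_Y)$, and conclude by two-out-of-three plus the fact that $\Sing$ reflects weak equivalences---is the standard proof and is essentially what the cited reference does. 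The two load-bearing inputs are exactly the ones you isolate: homotopy invariance of $\Tot^\res$ and $\holim^\BK_\Delta$ on objectwise fibrant diagrams (available in the paper via Propositions \ref{prop:tot_restricted_compared_with_holim_delta_restricted} and \ref{prop:comparing_holim_with_Tot} together with \cite[XI.5.6]{Bousfield_Kan}; note that objectwise fibrancy suffices for $\Tot^\res$ because $\Delta_\res$ is a direct Reedy category, and for $\holim^\BK_\Delta$ because the cosimplicial replacement of an objectwise fibrant diagram is Reedy fibrant), and the mate computation showing the naturality triangle commutes, which you correctly flag as the one step requiring care and which reduces, component by component, to the fact that both maps $\Tot^\res Y\tensordot\Delta[n]\rarrow\Sing|Y^n|$ are adjoint to the realization of the evaluation map. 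One small imprecision: the comparison maps are not literally instances of $|\lim_i X_i|\rarrow\lim_i|X_i|$, since the ends are weighted limits built from cotensors, but your actual argument handles the cotensor part correctly, so nothing is lost.
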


\begin{proof}
This is proved in \cite[6.15, 8.2]{Ching_Harper_derived_Koszul_duality}.
\end{proof}

The following corollary plays a key role in this paper.

\begin{prop}
Let $Y,Y'$ be cofibrant $\K$-coalgebras. Then the natural map of the form
$
  |\Hombold_{\coAlgK}(Y,Y')|\xrightarrow{\wequiv}
  \Map_{\coAlgK}(Y,Y')
$
is a weak equivalence.
\end{prop}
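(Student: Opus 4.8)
The plan is to recognize the displayed comparison map as an instance of the realization--totalization interchange map already established in Proposition \ref{prop:tot_commutes_with_realization}, and then to verify its single hypothesis. First I would unwind Definition \ref{defn:mapping_spaces_of_derived_K_coalgebras}: the mapping space $\Map_\sAb(Y,\K^n Y')$ is by definition the realization $|\Hombold_\sAb(Y,\K^n Y')|$, so the cosimplicial object $\Map_\sAb(Y,\K^\bullet Y')$ is precisely the objectwise realization of $W:=\Hombold_\sAb(Y,\K^\bullet Y')\in(\sSet)^{\Delta_\res}$. Hence the natural map in the proposition is nothing other than the comparison map
\begin{align*}
  |\Tot^\res W|\rightarrow\Tot^\res|W|
\end{align*}
of Proposition \ref{prop:tot_commutes_with_realization}, and it suffices to check that $W$ is objectwise fibrant.

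The key remaining step is therefore to verify that each $W^n=\Hombold_\sAb(Y,\K^n Y')$ is a Kan complex, i.e.\ fibrant in $\sSet$. Since $\sAb$ is a simplicial model category (Section \ref{sec:simplicial_structures}, cf.\ \cite[II.3]{Goerss_Jardine}), the mapping complex $\Hombold_\sAb(A,B)$ is a Kan complex whenever $A$ is cofibrant and $B$ is fibrant. Here $\K^n Y'$ is fibrant because every simplicial abelian group is a Kan complex, and the underlying simplicial abelian group of $Y$ is cofibrant: by Definition \ref{defn:model_cat_coAlgK_language}, $Y$ cofibrant in $\coAlgK$ means $\emptyset\rarrow Y$ is an underlying cofibration in $\sAb$, while by Remark \ref{rem:initial_object_K_coalgebras} the initial $\K$-coalgebra $\emptyset$ has underlying object $0$, the initial object of $\sAb$. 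Thus $W$ is objectwise fibrant.

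Applying the first half of Proposition \ref{prop:tot_commutes_with_realization} then immediately yields that $|\Tot^\res W|\xrightarrow{\wequiv}\Tot^\res|W|$ is a weak equivalence, which is the assertion. I do not expect a genuine obstacle here: the interchange of realization with restricted totalization is the real content, and it has already been carried out in \cite[6.15, 8.2]{Ching_Harper_derived_Koszul_duality} behind Proposition \ref{prop:tot_commutes_with_realization}; the only point demanding care is the bookkeeping that a cofibrant $\K$-coalgebra has cofibrant underlying simplicial abelian group, ensuring that the simplicial mapping complexes assembling $W$ are honestly Kan.
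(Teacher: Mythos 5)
Your proposal is correct and follows the same route as the paper, which simply cites Proposition \ref{prop:tot_commutes_with_realization}; you have additionally spelled out the objectwise fibrancy of $\Hombold_\sAb(Y,\K^\bullet Y')$, which the paper leaves implicit (and which also follows from the fact that every object of $\sAb$ is fibrant together with the cofibrancy of the underlying object of $Y$). No gaps.
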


\begin{proof}
This follows from Proposition \ref{prop:tot_commutes_with_realization}.
\end{proof}

The following provides a useful language for working with the spaces of derived $\K$-coalgebra maps; see \cite[1.11]{Arone_Ching_classification}.

\begin{defn}
\label{defn:derived_K_coalgebra_map}
Let $Y,Y'$ be cofibrant $\K$-coalgebras. A \emph{derived $\K$-coalgebra map} $f$ of the form $Y\rarrow Y'$ is any map in $(\sSet)^{\Delta_\res}$ of the form
$
  \function{f}{\Delta[-]}{\Hombold_\sAb\bigl(Y,\K^\bullet Y'\bigr)}.
$
A \emph{topological derived $\K$-coalgebra map} $g$ of the form $Y\rarrow Y'$ is any map in $(\CGHaus)^{\Delta_\res}$ of the form
$
  \function{g}{\Delta^\bullet}{\Map_\sAb\bigl(Y,\K^\bullet Y'\bigr)}.
$
The \emph{underlying map} of a derived $\K$-coalgebra map $f$ is the map $\function{f_0}{Y}{Y'}$ that corresponds to the map $\function{f_0}{\Delta[0]}{\Hombold_\sAb(Y,Y')}$. Note that every derived $\K$-coalgebra map $f$ determines a topological derived $\K$-coalgebra map $|f|$ by realization.
\end{defn}

\begin{defn}
If $X,Y\in(\sSet)^\Delta$ their \emph{box product} $X\square Y\in(\sSet)^\Delta$ is defined objectwise by a coequalizer of the form
\begin{align*}
  (X\square Y)^n
  \Iso\colim\Bigl(
  \xymatrix{
    \coprod\limits_{p+q=n} X^{p}\times Y^{q} & 
    \coprod\limits_{r+s=n-1} X^{r}\times Y^{s}
    \ar@<-1.0ex>[l]\ar@<0.0ex>[l]
  }
\Bigr)
\end{align*}
where the top (resp. bottom) map is induced by $\id\times d^0$ (resp. $d^{r+1}\times\id$) on each $(r,s)$ term of the indicated coproduct; note that $(X\square Y)^0\Iso X^0\times Y^0$. The coface maps $\function{d^i}{(X\square Y)^n}{(X\square Y)^{n+1}}$ are induced by
\begin{align*}
  \left\{
  \begin{array}{lr}
  X^p\times Y^q\xrightarrow{d^i\times\id}X^{p+1}\times Y^q, & \text{if $i\leq p$,}\\
  X^p\times Y^q\xrightarrow{\id\times d^{i-p}}X^p\times Y^{q+1}, & \text{if $i>p$,}
  \end{array}
  \right.
\end{align*}
and the codegeneracy maps $\function{s^j}{(X\square Y)^n}{(X\square Y)^{n-1}}$ are induced by
\begin{align*}
  \left\{
  \begin{array}{lr}
  X^p\times Y^q\xrightarrow{s^j\times\id}X^{p-1}\times Y^q, & \text{if $j<p$,}\\
  X^p\times Y^q\xrightarrow{\id\times s^{j-p}}X^p\times Y^{q-1}, & \text{if $j\geq p$,}
  \end{array}
  \right.
\end{align*}
If $(\M,\tensor)$ is any closed symmetric monoidal category and $X,Y\in\M^\Delta$, then their box product $X\square Y\in\M^\Delta$ is defined similarly by replacing $(\sSet,\times)$ with $(\M,\tensor)$; for instance, with $(\CGHaus,\times)$.
\end{defn}

\begin{rem}
If $X,Y\in(\sSet)^\Delta$ their box product $X\square Y\in(\sSet)^\Delta$ is the left Kan extension of objectwise product along ordinal sum (or concatenation). This is proved in \cite[2.3]{McClure_Smith_cosimplicial_objects}; see also Batanin \cite[Section 2]{Batanin_coherent} and McClure-Smith \cite{McClure_Smith_solution}; a dual version of the construction appears in Artin-Mazur \cite[III]{Artin_Mazur}.
\end{rem}

\begin{prop}
\label{prop:composition_map}
Let $Y,Y',Y''$ be cofibrant $\K$-coalgebras. There is a natural map of the form
$
  \function{\mu}
  {\Hombold_\sAb\bigl(Y,\K^\bullet Y'\bigr)\square
  \Hombold_\sAb\bigl(Y',\K^\bullet Y''\bigr)}
  {\Hombold_\sAb\bigl(Y,\K^\bullet Y''\bigr)}
$
in $(\sSet)^\Delta$. We sometimes refer to $\mu$ as the \emph{composition} map.
\end{prop}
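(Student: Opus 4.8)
The plan is to construct $\mu$ objectwise from the evident ``compose and push forward'' pairing, and then check that it is well defined on the box product and that it is a map of cosimplicial objects. The key input is that, by Proposition \ref{prop:useful_properties_of_the_adjunction}(e) together with Proposition \ref{prop:unit_and_counit_are_simplicial}, the comonad $\K=\tilde{\ZZ}U$ is a simplicial functor; hence each iterate $\K^p$ induces a natural map of mapping spaces $\Hombold_\sAb(A,B)\rightarrow\Hombold_\sAb(\K^p A,\K^p B)$, while $\sAb$ being a simplicial category supplies composition maps $\Hombold_\sAb(A,B)\times\Hombold_\sAb(B,C)\rightarrow\Hombold_\sAb(A,C)$. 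On the summand $\Hombold_\sAb(Y,\K^p Y')\times\Hombold_\sAb(Y',\K^q Y'')$ of the box product at level $n$ (indexed by $p+q=n$), I would define $\mu_{p,q}$ as the composite through $\Hombold_\sAb(\K^p Y',\K^{p+q}Y'')$ obtained by applying $\K^p$ to the second factor and then composing, using $\K^p\K^q Y''=\K^{p+q}Y''=\K^n Y''$; informally,
\[
  \mu_{p,q}\colon\thinspace (f,g)\longmapsto \K^p(g)\circ f.
\]

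The first real step is to verify that the $\mu_{p,q}$ descend through the coequalizer defining $\square$. For $(f,g)\in\Hombold_\sAb(Y,\K^r Y')\times\Hombold_\sAb(Y',\K^s Y'')$ with $r+s=n-1$, the defining relation identifies $(f,d^0 g)$ in the $(r,s+1)$-summand with $(d^{r+1}f,g)$ in the $(r+1,s)$-summand. Here $d^0$ on $\Hombold_\sAb(Y',\K^\bullet Y'')$ is induced by the $\K$-coaction $m_{Y'}$ on the \emph{source} $Y'$, and $d^{r+1}$ on $\Hombold_\sAb(Y,\K^\bullet Y')$ is induced by the same coaction $m_{Y'}$ regarded on the \emph{target} $Y'$; a short computation using functoriality of $\K^r$ then gives
\[
  \mu_{r,s+1}(f,d^0 g)=\K^{r+1}(g)\circ\K^r(m_{Y'})\circ f=\mu_{r+1,s}(d^{r+1}f,g).
\]
Conceptually, the box product relation glues the top coface of the first cobar to the zeroth coface of the second, and under $\mu$ both become the single insertion of the coaction $m_{Y'}$ at the junction between the $\K^r$ and $\K^s$ blocks; this is precisely the feature that makes $\square$ the correct monoidal structure for composing two-sided cobar constructions. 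Thus the $\mu_{p,q}$ assemble into a well-defined map $\mu_n$ at each level.

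It then remains to check that $\{\mu_n\}$ is a map of cosimplicial objects and is natural in $Y,Y',Y''$. For the coface maps I would compare $\mu_{n+1}\circ d^i$ with $d^i\circ\mu_n$ on each $(p,q)$-summand, splitting into the cases $i\leq p$ and $i>p$ dictated by the box product coface formula. When $i=0$ or $i=n+1$ the relevant coface is induced by a coaction ($m_Y$ on the source, or $m_{Y''}$ on the target) and the identity follows from functoriality of $\K$; for $1\leq i\leq n$ the coface is a comultiplication inserted in slot $i$, and the identity is exactly the naturality of the comultiplication $\function{m}{\K}{\K\K}$ (slot $i$ of the target lying either to the left of, or within, the pushed-forward factor $\K^p(g)$). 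The codegeneracy compatibilities are entirely analogous, using the counit $\function{\varepsilon}{\K}{\id}$ in place of the comultiplication, and naturality in the three variables is immediate since every ingredient---$\K^p$, the coactions, and the simplicial composition---is natural. The main obstacle is not conceptual but combinatorial: keeping the index bookkeeping straight across the box product coface/codegeneracy splitting and matching each slot correctly with the comonad and comodule structure maps on $\Hombold_\sAb(Y,\K^\bullet Y'')$. This is the cosimplicial dual of the standard associative composition pairing for two-sided bar constructions, and one could alternatively invoke the box product formalism of \cite[2.3]{McClure_Smith_cosimplicial_objects} to organize the verification.
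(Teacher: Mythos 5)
Your construction of $\mu$ is exactly the paper's: the map induced on the $(p,q)$-summand by applying $\K^p$ to the second factor and then composing, i.e.\ $(f,g)\mapsto\K^p(g)\circ f$, with the verification deferred in the paper to the argument of Arone--Ching [1.6]. Your coequalizer check and the coface/codegeneracy compatibilities are the content of that cited verification, correctly carried out, so this is essentially the same proof.
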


\begin{proof}
This is proved exactly as in \cite[1.6]{Arone_Ching_classification}; $\mu$ is the map induced by the collection of composites
\begin{align*}
  &\Hombold_\sAb\bigl(Y,\K^p Y'\bigr)\times
  \Hombold_\sAb\bigl(Y',\K^q Y''\bigr)\xrightarrow{\id\times\K^p}\\
  &\Hombold_\sAb\bigl(Y,\K^p Y'\bigr)\times
  \Hombold_\sAb\bigl(\K^p Y',\K^p \K^q Y''\bigr)\xrightarrow{\mathrm{comp}}
  \Hombold_\sAb\bigl(Y,\K^{p+q} Y''\bigr)
\end{align*}
where $p,q\geq 0$.
\end{proof}

\begin{prop}
\label{prop:realization_commutes_with_box_product}
Let $A,B\in(\sSet)^\Delta$. There is a natural isomorphism of the form
$
  |A\square B|\Iso|A|\square|B|
$
in $(\CGHaus)^\Delta$.
\end{prop}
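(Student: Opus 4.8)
The plan is to exploit two standard properties of the realization functor $|-|\colon\sSet\rightarrow\CGHaus$: that it is a left adjoint (to the singular complex functor) and hence preserves all small colimits, and that it preserves finite products, i.e., the natural map $|X\times Y|\rightarrow|X|\times|Y|$ is an isomorphism in $\CGHaus$ (this is precisely the point at which working in compactly generated spaces becomes essential). Since the box product is assembled objectwise out of coproducts, a coequalizer, and objectwise products, these two properties should combine to produce the desired isomorphism.

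More precisely, first I would apply $|-|$ objectwise to the defining coequalizer of $(A\square B)^n$. Because realization commutes with coproducts and coequalizers, $|(A\square B)^n|$ is naturally isomorphic to the coequalizer of the two maps $\coprod_{r+s=n-1}|A^r\times B^s|\rightrightarrows\coprod_{p+q=n}|A^p\times B^q|$. Next, applying the product-preservation isomorphism $|A^p\times B^q|\cong|A^p|\times|B^q|$ termwise identifies this with the coequalizer defining $(|A|\square|B|)^n$; this yields a natural isomorphism $|A\square B|^n\cong(|A|\square|B|)^n$ in $\CGHaus$ for each $n\geq 0$.

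It then remains to check that these objectwise isomorphisms are compatible with the cosimplicial structure, so that they assemble into an isomorphism in $(\CGHaus)^\Delta$. This amounts to unwinding the formulas for the coface maps $d^i$ and codegeneracy maps $s^j$ on each $(p,q)$-summand and using that the product- and colimit-preservation isomorphisms for $|-|$ are themselves natural; since both structure maps are induced from the same $\id\times d^i$, $d^{r+1}\times\id$ (resp. $s^j\times\id$, $\id\times s^{j-p}$) data on the product factors, naturality of $|-|$ forces the relevant squares to commute.

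The main obstacle is purely bookkeeping: keeping track of the cosimplicial structure maps across the colimit and product identifications and verifying strict compatibility with coface and codegeneracy maps, rather than any mathematical subtlety (the preservation of colimits and finite products being standard). Alternatively, one can sidestep much of this bookkeeping by recalling that $\square$ is the left Kan extension of objectwise product along ordinal sum $\function{+}{\Delta\times\Delta}{\Delta}$ (as noted just above, via \cite[2.3]{McClure_Smith_cosimplicial_objects}), and that $|-|$, being a product-preserving left adjoint, commutes with this left Kan extension up to canonical isomorphism; this delivers the cosimplicial isomorphism in one stroke.
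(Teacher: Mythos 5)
Your proposal is correct and follows essentially the same route as the paper, whose proof is the one-line observation that realization commutes with finite products and all small colimits; you simply spell out the objectwise coequalizer bookkeeping and compatibility with the cosimplicial structure maps that the paper leaves implicit.
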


\begin{proof}
This follows from the fact that realization commutes with finite products and all small colimits.
\end{proof}

\begin{prop}
\label{prop:composition_map_topological}
Let $Y,Y',Y''$ be cofibrant $\K$-coalgebras. There is a natural map of the form
$
  \function{\mu}{
  \Map_\sAb\bigl(Y,\K^\bullet Y'\bigr)\square
  \Map_\sAb\bigl(Y',\K^\bullet Y''\bigr)}
  {\Map_\sAb\bigl(Y,\K^\bullet Y''\bigr)}
$
in $(\CGHaus)^\Delta$. We sometimes refer to $\mu$ as the \emph{composition} map.
\end{prop}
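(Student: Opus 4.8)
The plan is to obtain the topological composition map directly from its simplicial counterpart in Proposition \ref{prop:composition_map} by applying realization objectwise in the cosimplicial direction, and then to use that realization commutes with box products (Proposition \ref{prop:realization_commutes_with_box_product}) in order to identify the source. First I would recall that, exactly as for pointed spaces, $\Map_\sAb(Y,\K^n Y')$ is the realization $|\Hombold_\sAb(Y,\K^n Y')|$ of the mapping simplicial set, so that $\Map_\sAb(Y,\K^\bullet Y')$ is the object of $(\CGHaus)^\Delta$ obtained by applying $|-|$ levelwise in the external cosimplicial variable to the cosimplicial simplicial set $\Hombold_\sAb(Y,\K^\bullet Y')$; since $|-|$ is a functor, this objectwise realization preserves the coface and codegeneracy maps and hence genuinely lands in $(\CGHaus)^\Delta$.

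Next I would apply $|-|$ to the simplicial composition map $\mu$ of Proposition \ref{prop:composition_map}, obtaining a map
\begin{align*}
  |\mu|\colon\thinspace
  \bigl|\Hombold_\sAb(Y,\K^\bullet Y')\square\Hombold_\sAb(Y',\K^\bullet Y'')\bigr|
  \rarrow
  \bigl|\Hombold_\sAb(Y,\K^\bullet Y'')\bigr|
  \Equal\Map_\sAb(Y,\K^\bullet Y'')
\end{align*}
in $(\CGHaus)^\Delta$. By Proposition \ref{prop:realization_commutes_with_box_product}, the natural isomorphism $|A\square B|\Iso|A|\square|B|$ identifies the source with $\Map_\sAb(Y,\K^\bullet Y')\square\Map_\sAb(Y',\K^\bullet Y'')$, and precomposing $|\mu|$ with the inverse of this isomorphism produces the desired composition map $\mu$ in $(\CGHaus)^\Delta$. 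Naturality in $Y,Y',Y''$ is inherited from the naturality of $\mu$ in Proposition \ref{prop:composition_map} together with the naturality of the isomorphism in Proposition \ref{prop:realization_commutes_with_box_product}.

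There is essentially no serious obstacle here; the argument is formal once one keeps careful track of the two distinct simplicial directions in play, namely the internal direction on which $|-|$ acts and the external cosimplicial direction in which $\square$ lives. The only point that deserves a moment's care is that Proposition \ref{prop:realization_commutes_with_box_product} provides an isomorphism of cosimplicial spaces, not merely a levelwise isomorphism; this is exactly what its proof guarantees, since realization commutes with finite products and all small colimits and is therefore compatible with the coface and codegeneracy maps that define the box product.
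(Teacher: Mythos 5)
Your argument is correct and is exactly the paper's proof: apply realization to the simplicial composition map of Proposition \ref{prop:composition_map} and use the natural isomorphism $|A\square B|\Iso|A|\square|B|$ of Proposition \ref{prop:realization_commutes_with_box_product} to identify the source. You have merely spelled out the details the paper leaves implicit.
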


\begin{proof}
This follows from Proposition \ref{prop:composition_map} by applying realization, together with Proposition \ref{prop:realization_commutes_with_box_product}.
\end{proof}

\begin{defn}
Let $Y$ be a cofibrant $\K$-coalgebra. The \emph{unit map} $\iota$ is the map
$*\rightarrow\Map_{\sAb}(Y,\K^\bullet Y)$
in $(\CGHaus)^\Delta$ which is realization of the coaugmentation map \cite[1.6]{Arone_Ching_classification} of the form $*\rarrow\Hombold_{\sAb}(Y,\K^\bullet Y)$ that picks out the identity map on $Y$ in simplicial degree 0.
\end{defn}

\begin{defn}
The non-$\Sigma$ operad $\mathsf{A}$ in $\CGHaus$ is the coendomorphism operad of $\Delta^\bullet$ with respect to the box product $\square$ (\cite[1.12]{Arone_Ching_classification}) and is defined objectwise by the end construction
$
  \mathsf{A}(n):=
  \Map_{\Delta_\res}\bigl(\Delta^\bullet,(\Delta^\bullet)^{\square n}\bigr)
  :=\Map\bigl(\Delta^\bullet,(\Delta^\bullet)^{\square n}\bigr)^{\Delta_\res}
$.
In other words, $\mathsf{A}(n)$ is the space of restricted cosimplicial maps from $\Delta^\bullet$ to $(\Delta^\bullet)^{\square n}$; in particular, note that $\mathsf{A}(0)=*$.
\end{defn}

Consider the natural collection \cite[1.13]{Arone_Ching_classification} of maps of the form ($n\geq 0$)
\begin{align}
\label{eq:A_infinity_composition_maps}
  \mathsf{A}(n)\times
  \Map_\coAlgK(Y_0,Y_1)\times\cdots\times
  \Map_\coAlgK(Y_{n-1},Y_n)
  \rightarrow
  \Map_\coAlgK(Y_0,Y_n)
\end{align}
induced by (iterations of) the composition map $\mu$ (Proposition \ref{prop:composition_map_topological}); in particular, in the case $n=0$, note that \eqref{eq:A_infinity_composition_maps} denotes the map
$
  *=\mathsf{A}(0)\rightarrow\Map_\coAlgK(Y_0,Y_0)
$
that is $\Tot^\res$ applied to the unit map.

\begin{prop}
\label{prop:A_infinity_enrichment}
The collection of maps \eqref{eq:A_infinity_composition_maps} determines a topological $A_\infty$ category with objects the cofibrant $\K$-coalgebras and morphism spaces the mapping spaces $\Map_\coAlgK(Y,Y')$.
\end{prop}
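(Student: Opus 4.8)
The plan is to verify that the collection of maps \eqref{eq:A_infinity_composition_maps}, assembled from the composition map $\mu$ (Proposition \ref{prop:composition_map_topological}) and the unit map $\iota$, satisfies the associativity and unit axioms governing an action of the non-$\Sigma$ operad $\mathsf{A}$ on the mapping spaces $\Map_\coAlgK(Y,Y')$; this action structure is precisely what it means to be a topological $A_\infty$ category in the sense of Arone-Ching \cite[1.13, 1.14]{Arone_Ching_classification}, and the argument proceeds along the same formal lines.

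First I would isolate the two structures that make the construction work. On the one hand, the box product $\square$ is a (non-symmetric) monoidal product on $(\CGHaus)^\Delta$, and the cosimplicial mapping objects $\Map_\sAb(Y,\K^\bullet Y')$ assemble into a category \emph{enriched} over $\bigl((\CGHaus)^\Delta,\square\bigr)$: the composition map $\mu$ is associative and the unit $\iota$ is a two-sided unit, both as morphisms of cosimplicial spaces. On the other hand, $\mathsf{A}$ is by definition the coendomorphism operad of the cosimplicial object $\Delta^\bullet$ with respect to $\square$, so that the restricted totalization $\Tot^\res(-)=\Map_{\Delta_\res}(\Delta^\bullet,-)$ carries a canonical comparison
$
  \mathsf{A}(n)\times\Tot^\res X_1\times\cdots\times\Tot^\res X_n
  \rightarrow
  \Tot^\res\bigl(X_1\square\cdots\square X_n\bigr),
$
obtained by sending $(\alpha,f_1,\dotsc,f_n)$ to the composite $\Delta^\bullet\xrightarrow{\alpha}(\Delta^\bullet)^{\square n}\xrightarrow{f_1\square\cdots\square f_n}X_1\square\cdots\square X_n$. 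The maps \eqref{eq:A_infinity_composition_maps} are exactly this comparison followed by $\Tot^\res$ applied to the iterated $\mu$ (taking $X_i=\Map_\sAb(Y_{i-1},\K^\bullet Y_i)$).

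Granting this, I would check the operad action axioms directly. Associativity amounts to the commutativity of the diagrams relating the operad composition $\mathsf{A}(k)\times\mathsf{A}(n_1)\times\cdots\times\mathsf{A}(n_k)\rightarrow\mathsf{A}(n_1+\cdots+n_k)$ to the iterated composition maps; this follows by combining the definition of operad composition in a coendomorphism operad, which is box-composition of the defining maps $\Delta^\bullet\to(\Delta^\bullet)^{\square n}$, with the associativity of $\mu$ and the evident interchange compatibility between $\square$ and composition. The unit axiom is handled by the observation recorded just after \eqref{eq:A_infinity_composition_maps}, that the $n=0$ instance of \eqref{eq:A_infinity_composition_maps} is $\Tot^\res$ applied to the unit map $\iota$, together with the identity element of $\mathsf{A}(1)$ given by the identity map of $\Delta^\bullet$.

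The main obstacle is establishing the associativity and unitality of $\mu$ at the level of cosimplicial spaces, rather than merely objectwise, and its strict compatibility with $\square$; once this coherence is in hand, the remaining verifications are formal manipulations of ends and box products identical to those in \cite[1.13]{Arone_Ching_classification}. The associativity of $\mu$ in turn reduces, via Propositions \ref{prop:composition_map} and \ref{prop:realization_commutes_with_box_product}, to the strict associativity of composing $\K$-coalgebra maps together with the coassociativity of the comultiplication $\function{m}{\K}{\K\K}$, which controls the $\K^p$-reindexing appearing in the definition of $\mu$; the unitality likewise reduces to the counit relations for $\K$.
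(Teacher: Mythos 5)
Your proposal is correct and takes essentially the same route as the paper, whose proof simply defers to Arone-Ching \cite[1.14]{Arone_Ching_classification}: the $A_\infty$-structure is obtained by pushing the $\square$-enriched category structure (composition $\mu$, unit $\iota$) through the canonical comparison $\mathsf{A}(n)\times\Tot^\res X_1\times\cdots\times\Tot^\res X_n\rightarrow\Tot^\res(X_1\square\cdots\square X_n)$ attached to the coendomorphism operad of $\Delta^\bullet$. One small correction: strict associativity of $\mu$ already follows from functoriality of $\K$ (i.e., $\K^{p+q}=\K^p\K^q$) and associativity of composition in $\sAb$, not from coassociativity of the comultiplication, which instead enters in verifying that $\mu$ is a well-defined map of cosimplicial objects out of the box product.
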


\begin{proof}
This is proved exactly as in \cite[1.14]{Arone_Ching_classification}.
\end{proof}

\begin{defn}
The \emph{homotopy category} of $\K$-coalgebras (see \cite[1.15]{Arone_Ching_classification}), denoted $\Ho(\coAlgK)$, is the category with objects the cofibrant $\K$-coalgebras and morphism sets $[Y,Y']_\K$ from $Y$ to $Y'$ the path components
$
  [Y,Y']_\K := \pi_0\Map_\coAlgK(Y,Y')
$
of the indicated mapping spaces.
\end{defn}

\begin{prop}
Let $Y,Y'$ be cofibrant $\K$-coalgebras. There is a natural map of morphism sets of the form
$\function{\pi}{\hom_\coAlgK(Y,Y')}{[Y,Y']_\K}$.
\end{prop}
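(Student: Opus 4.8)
The plan is to realize $\pi$ as the effect on vertices, followed by $\pi_0$, of a natural comparison map from the \emph{strict} limit of the cosimplicial object in Definition \ref{defn:derived_K_coalgebra_maps_second_attempt} to its restricted totalization.

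First I would identify the set of strict $\K$-coalgebra maps with the vertices of a strict limit. Applying Proposition \ref{prop:lim_of_cosimplicial_object} to $W:=\Hombold_\sAb(Y,\K^\bullet Y')\in(\sSet)^{\Delta_\res}$ gives a natural isomorphism $\lim_{\Delta_\res}W\Iso\mathrm{eq}(d^0,d^1)$, where $d^0,d^1\colon\thinspace\Hombold_\sAb(Y,Y')\rarrow\Hombold_\sAb(Y,\K Y')$ are the two coface maps. In simplicial degree $0$ these are the maps $g\mapsto mg$ and $g\mapsto(\K g)m$ on $\hom_\sAb(Y,Y')$, so a vertex of $\lim_{\Delta_\res}W$ is exactly a map $g$ in $\sAb$ satisfying the coaction condition $(\K g)m=mg$; that is, $(\lim_{\Delta_\res}W)_0\Iso\hom_\coAlgK(Y,Y')$, naturally in $Y,Y'$.

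Next I would produce the natural map from the strict limit to the totalization. The unique collapse $\Delta[-]\rarrow *$ is a map of restricted cosimplicial simplicial sets; applying $\hombold(-,W)^{\Delta_\res}$ and using $\hombold(*,W^n)\Iso W^n$ yields a natural map $\lim_{\Delta_\res}W\rarrow\Tot^\res W=\Hombold_\coAlgK(Y,Y')$ (Definitions \ref{defn:totalization_and_restricted_totalization} and \ref{defn:mapping_spaces_of_derived_K_coalgebras}). On vertices this sends a strict map $g$ to the ``constant'' derived $\K$-coalgebra map $f^g$ it determines (Definition \ref{defn:derived_K_coalgebra_map}); unwinding the degree-$0$ part shows the underlying map of $f^g$ is $g$ itself. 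Composing with the vertex-to-$\pi_0$ map of the realization, together with the natural weak equivalence $|\Hombold_\coAlgK(Y,Y')|\xrightarrow{\wequiv}\Map_\coAlgK(Y,Y')$ recalled earlier, then defines
\begin{align*}
  \pi\colon\thinspace\hom_\coAlgK(Y,Y')\Iso(\lim\nolimits_{\Delta_\res}W)_0\rarrow\pi_0|\Hombold_\coAlgK(Y,Y')|\xrightarrow{\Iso}\pi_0\Map_\coAlgK(Y,Y')=[Y,Y']_\K.
\end{align*}

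Naturality of $\pi$ in $(Y,Y')$ is then inherited from the naturality of each ingredient: the equalizer identification, the collapse-induced map $\lim_{\Delta_\res}\rarrow\Tot^\res$, realization, and $\pi_0$. There is no deep obstacle here; the only point requiring care is the bookkeeping---confirming that the degree-$0$ component of the collapse-induced map genuinely recovers $g$ as the underlying map of $f^g$, so that $\pi$ is compatible with the intended interpretation of a strict coalgebra map as the homotopy class of a derived coalgebra map.
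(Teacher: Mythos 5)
Your construction is correct and is exactly the intended one: a strict $\K$-coalgebra map is a vertex of the equalizer $\lim_{\Delta_\res}\Hombold_\sAb(Y,\K^\bullet Y')$, the collapse $\Delta[-]\rarrow *$ sends it to the corresponding ``constant'' derived $\K$-coalgebra map in $\Tot^\res\Hombold_\sAb(Y,\K^\bullet Y')$, and one passes to $\pi_0$ via the realization comparison. The paper states this proposition without proof (deferring implicitly to the Arone-Ching construction), and your argument supplies precisely the missing bookkeeping, including the correct identification of the degree-$0$ equalizer with $\hom_\coAlgK(Y,Y')$.
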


\begin{prop}
There is a well-defined functor
$
  \function{\gamma}{\coAlg^\mathrm{c}_K}{\Ho(\coAlgK)}
$
that is the identity on objects and is the map $\pi$ on morphisms; here, $\coAlg^\mathrm{c}_K\subset\coAlgK$ denotes the full subcategory of cofibrant $\K$-coalgebras.
\end{prop}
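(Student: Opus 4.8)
The plan is to verify that the map $\pi$ on morphism sets preserves identities and composition; well-definedness of $\gamma$ on objects and on morphisms is then immediate, since $\coAlg^\mathrm{c}_K$ has honest morphism \emph{sets} and $\gamma$ is defined by the set map $\pi$. First I would recall the concrete description of $\pi$. A strict $\K$-coalgebra map $\function{f}{Y}{Y'}$ is by definition an element of $\Hombold_\sAb(Y,Y')_0$ satisfying $d^0 f = d^1 f$ in $\Hombold_\sAb(Y,\K Y')_0$; this is precisely the coaction-compatibility $(\K f)m=mf$, read off from the coface maps of \eqref{eq:derived_K_coalgebra_maps_second_attempt}. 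Iterating the cosimplicial identities (e.g. $d^1 d^0 = d^0 d^0$ and $d^2 d^0 = d^0 d^1$) shows that all iterated cofaces of $f$ in a given degree agree, so $f$ assembles into a map of restricted cosimplicial simplicial sets $\function{\hat f}{\Delta[-]}{\Hombold_\sAb(Y,\K^\bullet Y')}$ that is constant in each cosimplicial degree. This is the canonical derived $\K$-coalgebra map determined by $f$, and $\pi(f):=[\hat f]\in\pi_0\Map_{\coAlgK}(Y,Y')=[Y,Y']_\K$.

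For identities, I would observe that $\widehat{\id_Y}$ is exactly the coaugmentation map $\function{\iota}{*}{\Hombold_{\sAb}(Y,\K^\bullet Y)}$ picking out the identity in degree $0$. Since the identity morphism of $Y$ in $\Ho(\coAlgK)$ is, by the $n=0$ instance of \eqref{eq:A_infinity_composition_maps}, precisely the path component of $\Tot^\res$ applied to $\iota$, it follows immediately that $\gamma(\id_Y)=\id_{\gamma Y}$.

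The main work is compatibility with composition. Given strict maps $\function{f}{Y}{Y'}$ and $\function{g}{Y'}{Y''}$, composition in $\Ho(\coAlgK)$ is the pairing on $\pi_0$ induced by the $n=2$ instance of \eqref{eq:A_infinity_composition_maps}; by Proposition \ref{prop:A_infinity_enrichment} this pairing is well-defined and, since $\mathsf{A}(2)$ is path-connected, independent of the chosen point of $\mathsf{A}(2)$. I would therefore choose a convenient point $a_0\in\mathsf{A}(2)$ (the canonical cosimplicial diagonal) and compute $\mu(a_0;\hat f,\hat g)$ directly, where $\mu$ is the composition map of Propositions \ref{prop:composition_map} and \ref{prop:composition_map_topological}. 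Since $\hat f$ and $\hat g$ are constant in each degree, the $(p,q)$-component of $\mu$, namely
\begin{align*}
  \Hombold_\sAb(Y,\K^p Y')\times\Hombold_\sAb(Y',\K^q Y'')
  &\xrightarrow{\id\times\K^p}
  \Hombold_\sAb(Y,\K^p Y')\times\Hombold_\sAb(\K^p Y',\K^{p+q}Y'')\\
  &\xrightarrow{\mathrm{comp}}
  \Hombold_\sAb(Y,\K^{p+q}Y''),
\end{align*}
sends the pair of constant values $(f_p,g_q)$ to $\K^p(g_q)\circ f_p$. A diagram chase using naturality of $\K$ and the strict coalgebra-map conditions for $f$ and $g$ identifies this with the $(p+q)$-th iterated coface of $gf$, i.e. with the constant value of $\widehat{gf}$. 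Hence $\mu(a_0;\hat f,\hat g)$ represents the same path component as $\widehat{gf}$, giving $\pi(g)\circ\pi(f)=[\widehat{gf}]=\pi(gf)$.

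The step I expect to be the main obstacle is this last diagram chase: verifying that the $\square$-composite of the two constant derived maps coincides, at least up to a $1$-simplex (which suffices on $\pi_0$), with the constant derived map of the strict composite. Concretely this reduces to checking $\K^p(g_q)\circ f_p = (gf)_{p+q}$ for all $p,q\geq 0$, a compatibility between the comonad comultiplication, the coactions on $Y,Y',Y''$, and naturality of $\K$. The remaining ingredients—path-connectivity of $\mathsf{A}(2)$ to suppress the choice of $a_0$, and the unit computation—are formal consequences of the Arone-Ching $A_\infty$ structure already recorded in Proposition \ref{prop:A_infinity_enrichment}.
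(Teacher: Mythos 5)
Your proposal is correct and follows the same route the paper takes, which is simply to defer this verification to Arone-Ching \cite[1.14]{Arone_Ching_classification}: a strict coalgebra map yields a degreewise-constant derived map, the unit is the coaugmentation, and the $\square$-composite of two constant derived maps is the constant derived map of the strict composite, independently of the chosen point of $\mathsf{A}(2)$. The key identity $\K^p(g_q)\circ f_p=(gf)_{p+q}$ that you flag does hold, by the iterated strict coalgebra-map condition for $f$ together with coassociativity of the coactions, so the argument goes through.
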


\begin{proof}
This is proved exactly as in \cite[1.14]{Arone_Ching_classification}.
\end{proof}

\begin{defn}
\label{defn:weak_equivalence_of_K_coalgebras}
A derived $\K$-coalgebra map $f$ of the form $Y\rarrow Y'$ is a \emph{weak equivalence} if the underlying map $\function{f_0}{Y}{Y'}$ is a weak equivalence.
\end{defn}

\begin{prop}
\label{prop:weak_equivalence_if_and_only_if_iso_in_homotopy_category}
Let $Y,Y'$ be cofibrant $\K$-coalgebras. A derived $\K$-coalgebra map $f$ of the form $Y\rarrow Y'$ is a weak equivalence if and only if the induced map $\gamma(f)$ in $[Y,Y']_\K$ is an isomorphism in the homotopy category of $\K$-coalgebras.
\end{prop}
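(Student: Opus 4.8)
The plan is to prove the two implications separately; the forward implication (that $f$ a weak equivalence implies $\gamma(f)$ an isomorphism) is the substantive one, and it rests on a Yoneda argument in $\Ho(\coAlgK)$.

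For the reverse implication, I would first observe that the cosimplicial degree $0$ projection maps $\Map_\coAlgK(Y,Y')=\Tot^\res\Map_\sAb(Y,\K^\bullet Y')\rightarrow\Map_\sAb(Y,Y')$ assemble into a functor $\Phi\colon\Ho(\coAlgK)\to\Ho(\sAb)$ sending a cofibrant $\K$-coalgebra $Y$ to its underlying object $UY$ and a class $\gamma(f)$ to the homotopy class $[f_0]$ of its underlying map (Definition \ref{defn:derived_K_coalgebra_map}). The only points to check are that $\Phi$ respects units and composition. The $A_\infty$ unit (\eqref{eq:A_infinity_composition_maps} with $n=0$) picks out the identity in degree $0$; and since in cosimplicial degree $0$ both $\mathsf{A}(n)$ and the composition map $\mu$ of Proposition \ref{prop:composition_map_topological} reduce to the canonical point and to strict composition in $\sAb$ respectively (because $(\Delta^\bullet)^{\square n}$ is a point and $\K^0=\id$ there), the projection carries the $A_\infty$ composition \eqref{eq:A_infinity_composition_maps} to ordinary composition of underlying maps. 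A functor preserves isomorphisms, so if $\gamma(f)$ is an isomorphism then $[f_0]$ is an isomorphism in $\Ho(\sAb)$; by saturation of the weak equivalences in the model category $\sAb$ this means $f_0$ is a weak equivalence, which by Definition \ref{defn:weak_equivalence_of_K_coalgebras} is exactly what it means for $f$ to be a weak equivalence.

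For the forward implication, I would use the covariant Yoneda lemma: $\gamma(f)$ is an isomorphism in $\Ho(\coAlgK)$ if and only if post-composition $[W,Y]_\K\to[W,Y']_\K$ is a bijection for every cofibrant $\K$-coalgebra $W$. Thus it suffices to prove that, whenever $f_0$ is a weak equivalence, post-composition with $f$,
\[
  f_*\colon\Map_\coAlgK(W,Y)\longrightarrow\Map_\coAlgK(W,Y'),
\]
induced by \eqref{eq:A_infinity_composition_maps} with $n=2$ and the vertex $f$ (using that $\mathsf{A}(2)$ is contractible), is a weak equivalence for all such $W$; passing to $\pi_0$ then gives the required bijection. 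To see this, I would write both sides as $\Tot^\res$ of the objectwise fibrant restricted cosimplicial spaces $\Map_\sAb(W,\K^\bullet Y)$ and $\Map_\sAb(W,\K^\bullet Y')$, and argue that $f_*$ is, up to the contractible choices parametrized by $\mathsf{A}$, the restricted totalization of a levelwise map. That levelwise map is a weak equivalence because each $\K^n$ preserves weak equivalences in $\sAb$ (the functor $\tilde{\ZZ}$ is left Quillen and every pointed space is cofibrant, while $U$ preserves weak equivalences, so $\K=\tilde{\ZZ}U$ preserves all weak equivalences), whence $\Map_\sAb(W,\K^n Y)\to\Map_\sAb(W,\K^n Y')$ is a weak equivalence of objectwise fibrant objects. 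Finally, $\Tot^\res$ preserves levelwise weak equivalences between objectwise fibrant restricted cosimplicial spaces, since $\Tot^\res X\wequiv\holim\nolimits_{\Delta_\res}^\BK X$ (Proposition \ref{prop:tot_restricted_compared_with_holim_delta_restricted}) and $\holim$ is homotopy invariant; this yields that $f_*$ is a weak equivalence.

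The main obstacle is the bookkeeping in the last step: because $f$ is only a \emph{derived} $\K$-coalgebra map rather than a strict one, $f_*$ is not literally the restricted totalization of the levelwise maps $\Map_\sAb(W,\K^n f_0)$, but is assembled through the box-product composition $\mu$ and the contractible operad $\mathsf{A}$. Making precise that, at the level of the associated cosimplicial spaces, this composite is nonetheless a levelwise weak equivalence — so that the homotopy invariance of $\Tot^\res$ applies — is the technical heart; it is here that one uses the contractibility of each $\mathsf{A}(n)$ together with the fact that the $(p,q)$ summand of $\mu$ is built by applying $\K^p$ to $f$, which is again a weak equivalence because $\K^p$ preserves weak equivalences.
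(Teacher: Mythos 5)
Your overall architecture is the right one and matches the argument the paper is actually invoking (the paper's own ``proof'' is a one-line pointer to Arone--Ching [1.16]): the easy direction, via the cosimplicial-degree-zero projection functor to $\Ho(\sAb)$ followed by saturation of the weak equivalences, is complete and correct; and the hard direction is correctly reduced, via Yoneda, to showing that composition with $f$ induces weak equivalences of derived mapping spaces $\Map_\coAlgK(W,Y)\rarrow\Map_\coAlgK(W,Y')$ for all cofibrant $W$. Your verification that the degree-zero projection respects units and the $A_\infty$ composition is also correct, since $(\Delta^\bullet)^{\square n}$ and the box product both collapse to the expected thing in cosimplicial degree $0$.

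The genuine gap is in the key lemma for the forward direction. You assert that $f_*$ is ``up to the contractible choices parametrized by $\mathsf{A}$, the restricted totalization of a levelwise map,'' and propose to close the acknowledged gap using contractibility of $\mathsf{A}(n)$ together with the form of the $(p,q)$ summands of $\mu$. But the obstruction is not the choice of composition parameter: the naive levelwise candidate $(\K^n f_0)_*\colon\thinspace\Hombold_\sAb(W,\K^nY)\rarrow\Hombold_\sAb(W,\K^nY')$ fails to be a map of restricted cosimplicial objects at all, because the coface built from the coaction requires the strict identity $m\circ f_0=\K(f_0)\circ m$, which for a merely derived map holds only up to the higher coherences $f_1,f_2,\dotsc$; and one can check that no point of $\mathsf{A}(2)$ concentrates the box-product composition in the $(n,0)$ bidegrees, so $f_*$ genuinely mixes cosimplicial degrees. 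The standard repair --- the step you would actually have to write out --- is to pass to the tower $\{\Tot^\res_s\}$: the map $f_*$ is compatible with the truncations, the layer of the tower at stage $s$ is $\Omega^s\Hombold_\sAb(W,\K^sY)$ (there are no matching-object corrections over $\Delta_\res$), and on that layer the composition collapses to the $(s,0)$ summand, i.e.\ to $\Omega^s$ of $(\K^sf_0)_*$, which is a weak equivalence by exactly the SM7/left-Quillen argument you give; one then concludes by comparing the two towers of fibrations. Without this (or an equivalent homotopy-invariance statement for $\Tot^\res$ applied to homotopy-coherent levelwise equivalences), the implication ``$f_0$ a weak equivalence $\Rightarrow$ $\gamma(f)$ an isomorphism'' is not yet proved.
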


\begin{proof}
The is proved exactly as in \cite[1.16]{Arone_Ching_classification}.
\end{proof}

\section{The derived adjunction}
\label{sec:derived_fundamental_adjunction}

The derived unit is the map of pointed spaces of the form $X\rarrow\holim_\Delta C(\tilde{\ZZ}X)$ corresponding to the identity map $\function{\id}{\tilde{\ZZ} X}{\tilde{\ZZ} X}$; it is tautologically the Bousfield-Kan $\ZZ$-completion map $X\rarrow X^\wedge_\ZZ$ in \cite[I.4]{Bousfield_Kan}. The derived counit is the derived $\K$-coalgebra map of the form $\tilde{\ZZ}\holim_\Delta C(Y)\rightarrow Y$ corresponding to the identity map $\function{\id}{\holim_\Delta C(Y)}{\holim_\Delta C(Y)}$, after taking into account the natural zigzags of weak equivalences
$
  \holim\nolimits_\Delta C(Y)
  \wequiv\Tot^\res C(Y)
$
of pointed spaces.

\begin{defn}
\label{defn:derived_counit_map}
The \emph{derived counit map} associated to \eqref{eq:derived_adjunction_main} is the derived $\K$-coalgebra map of the form $\tilde{\ZZ}\holim_\Delta C(Y)\rightarrow Y$, with 
$\tilde{\ZZ}\Tot^\res C(Y)\rightarrow Y$ the underlying map corresponding to the identity map
$\function{\id}{\Tot^\res C(Y)}{\Tot^\res C(Y)}$
in $\Space_*$, via the adjunctions \eqref{eqref:tot_adjunctions} and \eqref{eq:homology_adjunction}. In more detail, the derived counit map is the derived $\K$-coalgebra map defined by the composite (see \cite[2.17]{Arone_Ching_classification})
\begin{align}
\label{eq:derived_counit_map}
  \Delta[-]\xrightarrow{(*)}
  \Hombold_{\Space_*}\bigl(\Tot^\res C(Y),C(Y)\bigr)
  \Iso
  \Hombold_\sAb\bigl(\tilde{\ZZ}\Tot^\res C(Y),\K^\bullet Y\bigr)
\end{align}
in $(\sSet)^{\Delta_\res}$, where $(*)$ corresponds to the identity map on $\Tot^\res C(Y)$ in $\Space_*$, via the adjunctions \eqref{eqref:tot_adjunctions} and \eqref{eq:homology_adjunction}. 
\end{defn}

\begin{prop}
\label{prop:natural_tot_map_induced_by_simplicial_functor}
Let $\M,\M'$ be simplicial model categories. Let $\function{F}{\M}{\M'}$ be a simplicial functor and $X$ a cosimplicial (resp. restricted cosimplicial) object in $\M$. There are maps of the form
$F\Tot(X)\rarrow\Tot(FX)$ and $F\Tot^\res(X)\rarrow\Tot^\res(FX)$
(in $\M'$) induced by the simplicial structure maps of $F$.
\end{prop}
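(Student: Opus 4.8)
The plan is to build both maps from the tensor structure map $\sigma$ carried by $F$ as a simplicial functor (Remark \ref{rem:simplicial_functors}), combined with the adjunction $(-\tensordot\Delta[-])\dashv\Tot$ of Proposition \ref{prop:tot_adjunctions}. Recall that a simplicial functor $\function{F}{\M}{\M'}$ is equipped with a natural structure map $\function{\sigma_{Z,K}}{FZ\tensordot K}{F(Z\tensordot K)}$, natural in $Z\in\M$ and $K\in\sSet$, exactly as in Proposition \ref{prop:useful_properties_of_the_adjunction}(d)--(e); this is the simplicial structure map of $F$ referred to in the statement. I will treat the $\Tot$ case in detail, the $\Tot^\res$ case being identical after replacing $\Delta$ by $\Delta_\res$.

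First I would record the counit of the adjunction $(-\tensordot\Delta[-])\dashv\Tot$: for $X\in\M^\Delta$ it is a map of cosimplicial objects $\function{\varepsilon^X}{\Tot(X)\tensordot\Delta[-]}{X}$ in $\M^\Delta$, with levelwise components $\function{\varepsilon^X_n}{\Tot(X)\tensordot\Delta[n]}{X^n}$. Applying $F$ levelwise and precomposing with $\sigma$ produces, for each $n\geq 0$, the composite
\begin{align*}
  \theta_n\colon\thinspace F\Tot(X)\tensordot\Delta[n]
  \xrightarrow{\sigma_{\Tot(X),\Delta[n]}}
  F\bigl(\Tot(X)\tensordot\Delta[n]\bigr)
  \xrightarrow{F(\varepsilon^X_n)}
  F(X^n)=(FX)^n,
\end{align*}
where the final equality records that $FX$ is the cosimplicial object obtained by applying $F$ objectwise, so that $F(X^n)=(FX)^n$.

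The key step is to check that the levelwise maps $\theta_n$ assemble into a single map of cosimplicial objects $\function{\theta}{F\Tot(X)\tensordot\Delta[-]}{FX}$ in $(\M')^\Delta$. Given a morphism $\function{\alpha}{[n]}{[n']}$ in $\Delta$, the required compatibility square reduces, by functoriality of $F$, to combining the cosimplicial identity $X(\alpha)\circ\varepsilon^X_n=\varepsilon^X_{n'}\circ(\id\tensordot\Delta[\alpha])$ (i.e.\ that $\varepsilon^X$ is a map of cosimplicial objects) with the naturality of $\sigma$ in its simplicial-set variable applied to $\function{\Delta[\alpha]}{\Delta[n]}{\Delta[n']}$. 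This is a routine diagram chase and is the only point requiring genuine care; it is precisely where the hypothesis that $\sigma$ is natural---that is, that $F$ is honestly a simplicial functor and not merely equipped with unstructured maps on cotensors---gets used.

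Finally, transposing $\theta$ across the adjunction $(-\tensordot\Delta[-])\dashv\Tot$ in $\M'$ (Proposition \ref{prop:tot_adjunctions}) yields the desired natural map $F\Tot(X)\rarrow\Tot(FX)$ in $\M'$; naturality in $X$ is inherited from the naturality of $\varepsilon^X$ and of $\sigma$. Running the identical argument with $\Delta_\res$ in place of $\Delta$, and using the second adjunction of Proposition \ref{prop:tot_adjunctions} together with its counit $\Tot^\res(X)\tensordot\Delta[-]\rarrow X$ in $(\M')^{\Delta_\res}$, produces the map $F\Tot^\res(X)\rarrow\Tot^\res(FX)$. I expect no serious obstacle here: the construction is formal, and the sole substantive verification is the cosimplicial-compatibility chase above.
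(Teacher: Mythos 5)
Your construction is correct and is essentially the paper's own argument: the paper likewise forms, for each $n$, the composite of the simplicial structure map $\sigma$ with $F$ applied to the evaluation map $\hombold(\Delta[n],X^n)\tensordot\Delta[n]\rarrow X^n$, and then transposes via the tensor--cotensor adjunction isomorphisms and the universal property of the end; your use of the counit of $(-\tensordot\Delta[-])\dashv\Tot$ is just the global repackaging of those levelwise maps. The cosimplicial-compatibility check you flag is exactly the (implicit) dinaturality verification in the paper's one-line proof.
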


\begin{proof}
In both cases, the indicated map is induced by the composite maps
\begin{align*}
  F\bigl(\hombold(\Delta[n],X^n)\bigr)\tensordot\Delta[n]
  \xrightarrow{\sigma}
  F\bigl(\hombold(\Delta[n],X^n)\tensordot\Delta[n]\bigr)
  \xrightarrow{\id(\ev)}
  F(X^n),\quad\quad n\geq 0,
\end{align*}
via the natural isomorphisms in Remark \ref{rem:useful_adjunction_isomorphisms_simplicial_structure}.
\end{proof}

Consider the collection of maps
$\Delta[n]
  \rightarrow
  \Hombold_\sAb\bigl(\tilde{\ZZ}\Tot^\res C(Y),\K^n Y\bigr)$, 
$n\geq 0$,
in $\sSet$ described in \eqref{eq:derived_counit_map} associated to the derived counit map. It follows from the adjunction isomorphisms that these maps correspond with the maps
\begin{align}
\label{eq:canonical_maps_for_derived_counit}
  \bigl(\tilde{\ZZ}\Tot^\res C(Y)\bigr)\tensordot\Delta[n]
  \rightarrow \K^n Y,
  \quad\quad
  n\geq 0,
\end{align}
in $\sAb$, defined by the composite
\begin{align*}
  \bigl(\tilde{\ZZ}\Tot^\res C(Y)\bigr)\tensordot\Delta[n]
  \rightarrow
  \bigl(\Tot^\res \tilde{\ZZ} C(Y)\bigr)\tensordot\Delta[n]
  \xrightarrow{\mathrm{(*)}}
  \K(\K)^n Y\xrightarrow{\varepsilon(\id)^n\id=s^{-1}}\id(\K)^n Y
\end{align*}
where $(*)$ denotes the indicated projection map; here, it may be helpful to note that $\tilde{\ZZ}C(Y)=\Cobar(\K,\K,Y)$.

\begin{prop}
\label{prop:induced_map_on_mapping_spaces_built_from_Q}
Let $X,X'$ be pointed spaces. There are natural morphisms of mapping spaces of the form
$\tilde{\ZZ}:\Map_{\Space_*}(X,X')
  \rarrow\Map_\coAlgK(\tilde{\ZZ}X,\tilde{\ZZ}X')$
in $\CGHaus$.
\end{prop}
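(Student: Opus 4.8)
The plan is to build the asserted map first at the level of simplicial mapping spaces and then pass to realizations, the key inputs being that $\tilde{\ZZ}$ is a simplicial functor (Proposition \ref{prop:useful_properties_of_the_adjunction}(e)) and that the associated unit/coaction are simplicial natural transformations (Proposition \ref{prop:unit_and_counit_are_simplicial}). First I would invoke the simplicial functoriality of $\tilde{\ZZ}$ to obtain natural maps of simplicial sets $\tilde{\ZZ}\colon\Hombold_{\Space_*}(X,X')\to\Hombold_\sAb(\tilde{\ZZ}X,\tilde{\ZZ}X')$.

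The crucial point is that this map lands in the ``strict $\K$-coalgebra maps''. Recall that the two coface maps $d^0,d^1\colon\Hombold_\sAb(\tilde{\ZZ}X,\tilde{\ZZ}X')\to\Hombold_\sAb(\tilde{\ZZ}X,\K\tilde{\ZZ}X')$ are given by $d^0(g)=m_{\tilde{\ZZ}X'}\circ g$ and $d^1(g)=\K(g)\circ m_{\tilde{\ZZ}X}$, so that a map equalizes $d^0$ and $d^1$ precisely when it respects the $\K$-coaction. Since the coaction on $\tilde{\ZZ}(-)$ is $m=\tilde{\ZZ}(\eta)$ and $\eta$ is a simplicial natural transformation, applying $\tilde{\ZZ}$ to the naturality square for $\eta$ gives
\[
  m_{\tilde{\ZZ}X'}\circ\tilde{\ZZ}f=\K(\tilde{\ZZ}f)\circ m_{\tilde{\ZZ}X},
\]
not merely on vertices but at the level of the entire simplicial mapping space; that is, $d^0\circ\tilde{\ZZ}=d^1\circ\tilde{\ZZ}$. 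Next I would assemble this into a map into the totalization: the coalgebra-map property together with the cosimplicial identities produces a map of restricted cosimplicial simplicial sets $\Hombold_{\Space_*}(X,X')\tensordot\Delta[-]\to\Hombold_\sAb(\tilde{\ZZ}X,\K^\bullet\tilde{\ZZ}X')$, given degreewise by projecting off $\Delta[-]$, applying $\tilde{\ZZ}$, and postcomposing with the $n$-fold iterated coaction $\tilde{\ZZ}X'\to\K^n\tilde{\ZZ}X'$. Taking its adjoint under the $\Tot^\res$ adjunction (Proposition \ref{prop:tot_adjunctions}) yields a natural map of simplicial sets
\[
  \Phi\colon\Hombold_{\Space_*}(X,X')\to
  \Tot^\res\Hombold_\sAb(\tilde{\ZZ}X,\K^\bullet\tilde{\ZZ}X')
  =\Hombold_\coAlgK(\tilde{\ZZ}X,\tilde{\ZZ}X').
\]

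Finally I would realize and use the comparison of Proposition \ref{prop:tot_commutes_with_realization}. Writing $A=\Hombold_\sAb(\tilde{\ZZ}X,\K^\bullet\tilde{\ZZ}X')$, the composite
\[
  \Map_{\Space_*}(X,X')=|\Hombold_{\Space_*}(X,X')|
  \xrightarrow{|\Phi|}|\Tot^\res A|
  \longrightarrow\Tot^\res|A|=\Map_\coAlgK(\tilde{\ZZ}X,\tilde{\ZZ}X')
\]
is the desired natural map in $\CGHaus$; naturality in $X,X'$ is automatic since every functor and natural transformation used is natural. The main obstacle is the middle step: verifying that $\tilde{\ZZ}$, applied to the whole simplicial mapping space rather than to individual maps, is genuinely compatible with the cobar/coaction structure (equalizes the cofaces and assembles into a map of restricted cosimplicial objects), which is exactly where the simplicial naturality of the coaction is needed; the totalization adjunction and the realization comparison are then formal. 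This can be carried out exactly as in Arone-Ching \cite[2.17]{Arone_Ching_classification}.
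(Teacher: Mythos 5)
Your proposal is correct, but it takes a more hands-on route than the paper. You construct the map degreewise: apply the simplicial functor $\tilde{\ZZ}$ to the mapping space, observe that the image equalizes $d^0$ and $d^1$ because the coaction $m$ comes from the simplicially natural unit $\eta$, and then extend to a map of restricted cosimplicial objects into $\Hombold_\sAb(\tilde{\ZZ}X,\K^\bullet\tilde{\ZZ}X')$ by postcomposing with iterated coactions, finally adjointing over $\Tot^\res$ and realizing. The paper instead makes the whole construction formal: it maps $\Hombold_{\Space_*}(X,X')$ into $\Tot^\res$ of the constant cosimplicial object, pushes forward along the coaugmentation $X'\rarrow C(\tilde{\ZZ}X')$ (which is already known to be a map of cosimplicial objects), and then invokes the adjunction isomorphism of cosimplicial mapping objects from Proposition \ref{prop:cosimplicial_resolutions_of_K_coalgebras_respect_adjunction_isos} to identify the target with $\Hombold_\coAlgK(\tilde{\ZZ}X,\tilde{\ZZ}X')$; realization and Proposition \ref{prop:tot_commutes_with_realization} then finish exactly as in your last step. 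The two constructions produce the same map --- the adjoint of $X\xrightarrow{f}X'\rarrow U\K^n\tilde{\ZZ}X'$ is precisely the iterated coaction composed with $\tilde{\ZZ}(f)$ --- so the difference is one of bookkeeping: your version makes visible the fact that $\tilde{\ZZ}(f)$ is a \emph{strict} $\K$-coalgebra map (which is conceptually clarifying), at the price of having to verify by hand the compatibility with all the cofaces of $\Delta_\res$, which you correctly identify as the main obstacle; the paper's factorization through the coaugmentation and the pre-established isomorphism of $\Delta$-shaped diagrams makes that verification automatic.
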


\begin{proof}
Consider the composite
\begin{align*}
  \Hombold_{\Space_*}(X,X')&\rightarrow
  \Tot^\res\Hombold_{\Space_*}(X,X')\\
  &\xrightarrow{(*)}
  \Tot^\res\Hombold_{\Space_*}\bigl(X,C(\tilde{\ZZ}X')\bigr)\Iso
  \Hombold_\coAlgK(\tilde{\ZZ}X,\tilde{\ZZ}X')
\end{align*}
The proposition follows by applying realization and using Proposition \ref{prop:tot_commutes_with_realization}; here, the map $(*)$ is induced by the natural coaugmentation $X'\rarrow C(\tilde{\ZZ}X')$ in $(\Space_*)^\Delta$.
\end{proof}

\begin{prop}
There is an induced functor
$\function{\tilde{\ZZ}}{\Ho(\Space_*)}{\Ho(\coAlgK)}$
which on objects is the map $X\mapsto \tilde{\ZZ}X$ and on morphisms is the map
$
  [X,X']\rightarrow[\tilde{\ZZ}X,\tilde{\ZZ}X']_K
$
which sends $[f]$ to $[\tilde{\ZZ}(f)]$ obtained by taking path components.
\end{prop}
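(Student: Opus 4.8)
The plan is to obtain the functor by applying the path-component functor $\pi_0$ to the natural maps of mapping spaces constructed in Proposition \ref{prop:induced_map_on_mapping_spaces_built_from_Q}, after first checking that the object assignment is well defined and that these maps are compatible with the relevant composition structures. For the object assignment, recall that every pointed space is cofibrant in $\Space_*$ and that $\function{\tilde{\ZZ}}{\Space_*}{\sAb}$ is left Quillen; since cofibrations in $\coAlgK$ are precisely the underlying cofibrations in $\sAb$ (Definition \ref{defn:model_cat_coAlgK_language}), the lift $\function{\tilde{\ZZ}}{\Space_*}{\coAlgK}$ (the top-left adjoint in \eqref{eq:factorization_of_adjunctions_Z_homology}) sends cofibrant objects to cofibrant objects. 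Hence $\tilde{\ZZ}X$ is a cofibrant $\K$-coalgebra and $X\mapsto\tilde{\ZZ}X$ lands in the objects of $\Ho(\coAlgK)$.

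For the morphism assignment, I would apply $\pi_0$ to the natural map $\function{\tilde{\ZZ}}{\Map_{\Space_*}(X,X')}{\Map_{\coAlgK}(\tilde{\ZZ}X,\tilde{\ZZ}X')}$ of Proposition \ref{prop:induced_map_on_mapping_spaces_built_from_Q}, producing the map $[X,X']\rarrow[\tilde{\ZZ}X,\tilde{\ZZ}X']_\K$. Tracing through that construction, the underlying map (simplicial degree $0$ in the $\Delta_\res$-direction) of the derived $\K$-coalgebra map associated to a vertex $\function{f}{X}{X'}$ is exactly $\tilde{\ZZ}(f)$, via the adjunction isomorphism of Proposition \ref{prop:useful_properties_of_the_adjunction}(b) and the coaugmentation $X'\rarrow C(\tilde{\ZZ}X')$; so on $\pi_0$ the map sends $[f]$ to $[\tilde{\ZZ}(f)]$, as claimed. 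Preservation of identities is then immediate, since the unit map $\iota$ picks out the identity derived $\K$-coalgebra map and $\tilde{\ZZ}(\id_X)=\id_{\tilde{\ZZ}X}$.

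The main point, and the main obstacle, is compatibility with composition: the source $\Space_*$ carries strict associative composition on its mapping spaces, whereas composition in $\Ho(\coAlgK)$ is induced by the topological $A_\infty$ structure \eqref{eq:A_infinity_composition_maps} (Proposition \ref{prop:A_infinity_enrichment}), whose binary part is parametrized by $\mathsf{A}(2)$. The plan is to show that the maps $\tilde{\ZZ}$ assemble into a map of topological categories that is compatible with the $A_\infty$ (equivalently, the $\mathsf{A}$-) composition; equivalently, that the diagram comparing strict composition in $\Space_*$ with the composition map $\mu$ (Propositions \ref{prop:composition_map} and \ref{prop:composition_map_topological}) commutes up to the $\mathsf{A}$-action, after applying the construction of Proposition \ref{prop:induced_map_on_mapping_spaces_built_from_Q} and the coaugmentation $X'\rarrow C(\tilde{\ZZ}X')$. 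On underlying maps, where $(A\square B)^0\iso A^0\times B^0$, the map $\mu$ specializes (the $p=q=0$, $\K^0=\id$ summand) to ordinary composition in $\sAb$, so the underlying map of the $A_\infty$-composite of $\tilde{\ZZ}(f)$ and $\tilde{\ZZ}(g)$ is $\tilde{\ZZ}(g)\tilde{\ZZ}(f)=\tilde{\ZZ}(gf)$; the higher cosimplicial box-product structure supplies the coherence homotopies placing the $A_\infty$-composite and $\tilde{\ZZ}(gf)$ in the same path component. This compatibility is established exactly as in Arone-Ching \cite[1.14]{Arone_Ching_classification} (compare \cite[2.17]{Arone_Ching_classification}). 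Granting it, applying $\pi_0$ to morphism spaces of the resulting map of $A_\infty$ categories yields a functor of ordinary categories, namely the stated $\function{\tilde{\ZZ}}{\Ho(\Space_*)}{\Ho(\coAlgK)}$.
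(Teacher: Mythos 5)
Your proposal is correct and follows essentially the same route as the paper: the paper's proof simply applies $\pi_0$ to the map of mapping spaces from Proposition \ref{prop:induced_map_on_mapping_spaces_built_from_Q} and defers the $A_\infty$/composition compatibility to Arone--Ching. You have merely filled in the details the paper leaves implicit (cofibrancy of $\tilde{\ZZ}X$, identification of the underlying map with $\tilde{\ZZ}(f)$, and the degree-zero specialization of the box-product composition $\mu$ to strict composition), all of which check out.
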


\begin{proof}
This follows from Proposition \ref{prop:induced_map_on_mapping_spaces_built_from_Q} (see \cite[2.20]{Arone_Ching_classification}).
\end{proof}

The following three propositions, which are exercises left to the reader, verify that the cosimplicial resolutions of $\K$-coalgebra mapping spaces respect the adjunction isomorphisms associated to the $(\tilde{\ZZ},U)$ adjunction (Proposition \ref{prop:cosimplicial_resolutions_of_K_coalgebras_respect_adjunction_isos}).

\begin{prop}
\label{prop:adjunction_isos_respect_cosimplicial_relations}
Let $X\in\Space_*$ and $Y\in\coAlgK$. The adjunction isomorphisms associated to the $(\tilde{\ZZ},U)$ adjunction induce well-defined isomorphisms of $\Delta$-shaped diagrams
$
  \hom_{\sAb}(\tilde{\ZZ}X,\K^\bullet Y)\Iso
  \hom_{\Space_*}(X,U\K^\bullet Y)
$
in $\Set$, natural in $X,Y$.
\end{prop}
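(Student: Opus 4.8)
The plan is to recognize the asserted isomorphism as nothing more than the underlying adjunction bijection of $(\tilde{\ZZ},U)$, applied in each cosimplicial degree and recognized as natural in the $\sAb$-variable. Recall (from the $(\tilde{\ZZ},U)$-adjunction recalled just before Proposition \ref{prop:useful_properties_of_the_adjunction}) that for each $A\in\sAb$ there is a bijection
\begin{align*}
  \function{\Phi_A}{\hom_{\sAb}(\tilde{\ZZ}X,A)}{\hom_{\Space_*}(X,UA)},\qquad
  g\mapsto (Ug)\circ\eta_X,
\end{align*}
where $\function{\eta}{\id}{U\tilde{\ZZ}}$ is the unit; this bijection is natural in $X$ and in $A$. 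In other words, $\Phi$ is a natural isomorphism between the two functors $\hom_{\sAb}(\tilde{\ZZ}X,-)$ and $\hom_{\Space_*}(X,U(-))$ from $\sAb$ to $\Set$.

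First I would regard the cosimplicial object $\K^\bullet Y$ as a functor $\function{\K^\bullet Y}{\Delta}{\sAb}$ sending $[n]$ to $\K^n Y$, whose coface and codegeneracy maps are the morphisms of $\sAb$ induced by the comultiplication, coaction, and counit (Definition \ref{defn:derived_K_coalgebra_maps_second_attempt}). The left-hand diagram $\hom_{\sAb}(\tilde{\ZZ}X,\K^\bullet Y)$ is then the composite $\hom_{\sAb}(\tilde{\ZZ}X,-)\circ\K^\bullet Y$, with cosimplicial structure maps given by post-composition with the structure maps of $\K^\bullet Y$. Since $U$ is a functor, $U\K^\bullet Y$ is precisely the composite $U\circ\K^\bullet Y$, i.e.\ the cosimplicial pointed space whose structure maps are $U$ applied to those of $\K^\bullet Y$; hence the right-hand diagram $\hom_{\Space_*}(X,U\K^\bullet Y)$ is the composite $\hom_{\Space_*}(X,-)\circ U\circ\K^\bullet Y$, with structure maps post-composition by the corresponding $U$-images.

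The key step is to whisker $\Phi$ with $\K^\bullet Y$: composing the natural isomorphism $\Phi$ with the functor $\function{\K^\bullet Y}{\Delta}{\sAb}$ yields a natural isomorphism of functors $\Delta\rarrow\Set$, which is exactly an isomorphism of $\Delta$-shaped diagrams. Unwinding this, for each structure map $\function{\phi}{\K^n Y}{\K^{n'}Y}$ of $\K^\bullet Y$ one checks that the square with vertical maps $\Phi_{\K^n Y}$, $\Phi_{\K^{n'}Y}$ and horizontal maps post-composition by $\phi$ and by $U\phi$ commutes; but this is precisely naturality of $\Phi$ in the $\sAb$-variable applied to $\phi$ (both composites send $g$ to $(U\phi)\circ(Ug)\circ\eta_X$). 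As each $\Phi_{\K^n Y}$ is a bijection, the resulting map of cosimplicial sets is an isomorphism. Naturality in $X$ is immediate from naturality of $\Phi$ in $X$; naturality in $Y$ follows because a $\K$-coalgebra map $Y\rarrow Y''$ induces a map of cosimplicial objects $\K^\bullet Y\rarrow\K^\bullet Y''$, over which $\Phi$ is natural.

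I expect no genuine obstacle beyond bookkeeping: the entire content is naturality of the $(\tilde{\ZZ},U)$-adjunction bijection in its second variable, together with the observation that on both sides the cosimplicial structure is induced by post-composition. The only point requiring a little care is to confirm that the target diagram $U\K^\bullet Y$ carries exactly the structure obtained by applying $U$ levelwise to $\K^\bullet Y$ (rather than, say, the two-sided structure of $C(Y)=\Cobar(U,\K,Y)$ from Definition \ref{defn:cobar_construction}), so that the comparison reduces cleanly to whiskering a single natural transformation.
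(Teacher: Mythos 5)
There is a genuine gap, and it sits exactly at the point you flag as ``only requiring a little care'' and then resolve in the wrong direction. The assignment $[n]\mapsto\K^nY$ is not a functor $\Delta\rarrow\sAb$: for a general $\K$-coalgebra $Y$ there is only one natural candidate $Y\rarrow\K Y$ (the coaction), so there is no cosimplicial object ``$\K^\bullet Y$ in $\sAb$'' against which to whisker $\Phi$. The notation $U\K^\bullet Y$ in this proposition denotes the two-sided cobar construction $C(Y)=\Cobar(U,\K,Y)$ of Definition \ref{defn:cobar_construction} (this is forced by Proposition \ref{prop:fundamental_adjunction_derived_version}, where $\Hombold_{\Space_*}(X,\Tot^\res C(Y))$ is identified with $\Tot^\res\Hombold_{\Space_*}(X,U\K^\bullet Y)$), and likewise $\hom_{\sAb}(\tilde{\ZZ}X,\K^\bullet Y)$ carries the two-sided structure of Definition \ref{defn:derived_K_coalgebra_maps_second_attempt}. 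Consequently not every cosimplicial structure map is post-composition by a morphism of $\sAb$: the coface encoding the left-module structure is, on the left-hand side, $f\mapsto\K(f)\circ\tilde{\ZZ}(\eta_X)$ (using the coaction $m=\tilde{\ZZ}(\eta_X)$ on $\tilde{\ZZ}X$), while on the right-hand side it is post-composition with $\eta_{U\K^nY}\colon\thinspace U\K^nY\rarrow U\K^{n+1}Y$ (the coaction $m=\eta\id$ on $U$). Naturality of $\Phi$ in the $\sAb$-variable says nothing about this pair of maps, so the whiskering argument does not cover it --- and matching these two cofaces under the adjunction bijection is precisely the non-formal content of the proposition.

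The strategy is salvageable with one additional check. The middle cofaces (comultiplication $\K\rarrow\K\K$), the top coface (coaction on $Y$), and all codegeneracies (counit) are post-compositions by morphisms of $\sAb$, so for those your appeal to naturality of $\Phi_A$ in $A$ is correct, as are your remarks on levelwise bijectivity and naturality in $X$ and $Y$. For the remaining coface one must verify directly that $\Phi\bigl(\K(f)\circ\tilde{\ZZ}(\eta_X)\bigr)=\eta_{U\K^nY}\circ\Phi(f)$, i.e., that
\begin{align*}
  U\K(f)\circ U\tilde{\ZZ}(\eta_X)\circ\eta_X
  \Equal \eta_{U\K^nY}\circ U(f)\circ\eta_X ,
\end{align*}
which follows from two applications of naturality of the unit $\eta$: at the map $\eta_X$ (giving $U\tilde{\ZZ}(\eta_X)\circ\eta_X=\eta_{U\tilde{\ZZ}X}\circ\eta_X$) and at the map $U(f)$ (giving $U\tilde{\ZZ}U(f)\circ\eta_{U\tilde{\ZZ}X}=\eta_{U\K^nY}\circ U(f)$, where $U\K(f)=U\tilde{\ZZ}U(f)$). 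With this square in place the proposition follows as you describe.
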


\begin{prop}
\label{prop:sigma_is_a_map_of_K_coalgebras_general_statement}
If $Y\in\coAlgK$ with comultiplication map $\function{m}{Y}{\K Y}$ and $L\in\sSet$, then $Y\tensordot L$ in $\sAb$ has a natural $\K$-coalgebra structure with  comultiplication map $\function{m}{Y\tensordot L}{\K(Y\tensordot L)}$ given by the composite
$
  Y\tensordot L\xrightarrow{m\tensordot\id}
  \K(Y)\tensordot L\xrightarrow{\sigma}
  \K(Y\tensordot L)
$.
\end{prop}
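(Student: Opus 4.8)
The plan is to verify directly that the composite $\bar{m}:=\sigma\circ(m\tensordot\id)\colon Y\tensordot L\rarrow\K(Y\tensordot L)$ satisfies the two $\K$-coalgebra axioms (counit and coassociativity); the whole point is that \emph{every} structure map in sight is simplicial, hence automatically compatible with $-\tensordot L$. First I would record the simplicial bookkeeping. Since $\K=\tilde{\ZZ}U$ is the composite of the simplicial functors $\tilde{\ZZ}$ and $U$ (Proposition \ref{prop:useful_properties_of_the_adjunction}(e)), it is itself a simplicial functor whose structure map is the map $\sigma$ appearing in the statement; moreover the structure map $\sigma^{(2)}$ of the composite functor $\K\K$ equals $\K(\sigma)\circ\sigma_{\K(-)}$ by functoriality of structure maps under composition. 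I would then invoke Proposition \ref{prop:unit_and_counit_are_simplicial}, which gives that the counit $\varepsilon\colon\K\rarrow\id$ and the comultiplication $m\colon\K\rarrow\K\K$ are simplicial natural transformations; concretely, the compatibility squares relating $\varepsilon$, $m$, $\sigma$, and $\sigma^{(2)}$ commute after applying $-\tensordot L$.

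For the counit axiom I would compute $\varepsilon_{Y\tensordot L}\circ\bar{m}=\varepsilon_{Y\tensordot L}\circ\sigma\circ(m\tensordot\id)$ and use simpliciality of $\varepsilon$ (with $\id$ carrying the trivial structure map) to replace $\varepsilon_{Y\tensordot L}\circ\sigma$ by $\varepsilon_Y\tensordot\id$. Functoriality of $-\tensordot L$ then collapses the composite to $(\varepsilon_Y\circ m)\tensordot\id=\id_Y\tensordot\id=\id$, using the counit axiom $\varepsilon_Y\circ m=\id_Y$ of the given coalgebra $Y$.

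For coassociativity I would expand both sides of the target identity $m_{Y\tensordot L}\circ\bar{m}=\K(\bar{m})\circ\bar{m}$, where $m_{Y\tensordot L}\colon\K(Y\tensordot L)\rarrow\K\K(Y\tensordot L)$ denotes the comonad comultiplication component. On the left, simpliciality of $m\colon\K\rarrow\K\K$ commutes the comultiplication past $\sigma$ to give $\sigma^{(2)}\circ((m_Y\circ m)\tensordot\id)$, and coassociativity of $Y$ (that $m_Y\circ m=\K(m)\circ m$) rewrites this as $\sigma^{(2)}\circ((\K(m)\circ m)\tensordot\id)$. On the right, unwinding $\K(\bar{m})=\K(\sigma)\circ\K(m\tensordot\id)$ and applying naturality of $\sigma$ to the coaction $m\colon Y\rarrow\K Y$ to commute $\K(m\tensordot\id)$ past $\sigma$ yields $\K(\sigma)\circ\sigma_{\K Y}\circ((\K(m)\circ m)\tensordot\id)$. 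The two expressions agree precisely because $\sigma^{(2)}=\K(\sigma)\circ\sigma_{\K Y}$ is the structure map of $\K\K$.

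The only delicate point---everything else being formal diagram-chasing---is this last identification: one must match the structure map $\sigma^{(2)}$ of $\K\K$ implicit in the simpliciality of the comultiplication with the composite $\K(\sigma)\circ\sigma_{\K(-)}$, and confirm that the two expansions line up on the nose. Naturality of $\bar{m}$ in both $Y$ and $L$ is then immediate from naturality of $\sigma$ and of $m$, completing the verification that $(Y\tensordot L,\bar{m})$ is a $\K$-coalgebra.
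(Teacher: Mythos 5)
Your proof is correct, and since the paper explicitly leaves this proposition (together with Propositions \ref{prop:adjunction_isos_respect_cosimplicial_relations} and \ref{prop:natural_isomorphism_sigma_respects_cosimplicial_relations}) as an exercise for the reader, there is no written proof to diverge from; your argument is exactly the intended one, resting on Proposition \ref{prop:useful_properties_of_the_adjunction}(e) (so that $\K=\tilde{\ZZ}U$ is a simplicial functor with structure map $\sigma$) and Proposition \ref{prop:unit_and_counit_are_simplicial} (so that $\varepsilon$ and the comonad comultiplication are simplicial natural transformations). You correctly isolate the one point requiring care, namely that the structure map of the composite $\K\K$ implicit in the simpliciality of $\K\rarrow\K\K$ is $\K(\sigma)\circ\sigma_{\K(-)}$, which is what makes the two expansions of the coassociativity identity coincide.
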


\begin{prop}
\label{prop:natural_isomorphism_sigma_respects_cosimplicial_relations}
Let $X\in\Space_*$ and $Y\in\coAlgK$. Then $\function{\sigma}{\tilde{\ZZ}(X)\tensordot L}{\tilde{\ZZ}(X\tensordot L)}$ induces well-defined isomorphisms
$
  \hom_{\sAb}\bigl(\tilde{\ZZ}(X\tensordot L),\K^\bullet Y\bigr)
  \Iso
  \hom_{\sAb}\bigl(\tilde{\ZZ}(X)\tensordot L,\K^\bullet Y\bigr)
$
of $\Delta$-shaped diagrams in $\Set$, natural in $X,Y$.
\end{prop}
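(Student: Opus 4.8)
The plan is to observe that the cosimplicial structure on $\hom_{\sAb}(W,\K^\bullet Y)$, for any \emph{fixed} object $W\in\sAb$, is induced entirely by the coface and codegeneracy maps of the cosimplicial object $\K^\bullet Y$ in $\sAb$---these are assembled from the comultiplication $m$, the counit $\varepsilon$, and the $\K$-coaction on $Y$---via post-composition in the second variable of $\hom_{\sAb}(-,-)$. Since the comparison map $\sigma\colon\thinspace\tilde{\ZZ}(X)\tensordot L\rarrow\tilde{\ZZ}(X\tensordot L)$ of Proposition \ref{prop:useful_properties_of_the_adjunction}(a) is a single morphism in $\sAb$ that does not vary with the cosimplicial degree, precomposition with $\sigma$ acts only on the first variable, and the two operations commute by bifunctoriality.

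More precisely, first I would apply the contravariant functor $\hom_{\sAb}(-,\K^n Y)$ to $\sigma$ for each $n\geq 0$ to obtain the objectwise maps
\[
  \sigma^*\colon\thinspace\hom_{\sAb}\bigl(\tilde{\ZZ}(X\tensordot L),\K^n Y\bigr)\rarrow
  \hom_{\sAb}\bigl(\tilde{\ZZ}(X)\tensordot L,\K^n Y\bigr),\quad g\mapsto g\circ\sigma,
\]
each of which is an isomorphism because $\sigma$ is an isomorphism in $\sAb$ (Proposition \ref{prop:useful_properties_of_the_adjunction}(a)). To see that these assemble into a map of $\Delta$-shaped diagrams, I would check that $\sigma^*$ commutes with each coface map and each codegeneracy map; on both cosimplicial objects such a structure map is post-composition with the corresponding morphism $\phi\colon\thinspace\K^n Y\rarrow\K^{n+1}Y$ (resp. $\K^n Y\rarrow\K^{n-1}Y$). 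This commutativity is immediate: both composites send $g$ to $\phi\circ g\circ\sigma$. Hence $\{\sigma^*\}_{n\geq 0}$ is a natural transformation of $\Delta$-shaped diagrams in $\Set$, and being objectwise an isomorphism it is an isomorphism of $\Delta$-shaped diagrams.

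Finally, naturality in $X$ follows from the naturality of $\sigma$ in $X$ asserted in Proposition \ref{prop:useful_properties_of_the_adjunction}(a), and naturality in $Y$ follows from the functoriality of $\K^\bullet(-)$ together with the functoriality of $\hom_{\sAb}(W,-)$ in the second variable. The only point requiring any care---and thus the ``main obstacle,'' such as it is---is simply recording that every coface and codegeneracy map of the cosimplicial object $\hom_{\sAb}(-,\K^\bullet Y)$ is induced from a morphism in the $\K^\bullet Y$ slot, so that $\sigma^*$ is genuinely a cosimplicial map; once this is in hand the result is a formal consequence of $\sigma$ being a natural isomorphism, exactly parallel to the arguments underlying Propositions \ref{prop:adjunction_isos_respect_cosimplicial_relations} and \ref{prop:sigma_is_a_map_of_K_coalgebras_general_statement}.
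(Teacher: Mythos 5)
The paper leaves this proposition as an exercise, so there is no written proof to compare against; judged on its own terms, your argument has a genuine gap at exactly the point you dismiss as routine. Your central claim---that every coface and codegeneracy map of $\hom_{\sAb}(W,\K^\bullet Y)$ is post-composition with a morphism in the $\K^\bullet Y$ slot---is false for $d^0$. In the cosimplicial resolution of Definition \ref{defn:derived_K_coalgebra_maps_second_attempt} (and in the equalizer description following \eqref{eq:factorization_of_adjunctions_Z_homology}), the zeroth coface is $d^0(g)=\K(g)\circ m_W$, built from the $\K$-coaction $\function{m_W}{W}{\K W}$ on the \emph{source}; only $d^i$ for $i\geq 1$ and the codegeneracies are post-composition in the target variable. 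Consequently, checking that $\sigma^*$ commutes with $d^0$ is not the formal bifunctoriality statement you give: one must verify
\begin{align*}
  \K(g\circ\sigma)\circ m_{\tilde{\ZZ}(X)\tensordot L}
  = \bigl(\K(g)\circ m_{\tilde{\ZZ}(X\tensordot L)}\bigr)\circ\sigma ,
\end{align*}
which holds precisely when $\sigma$ is a morphism of $\K$-coalgebras from $\tilde{\ZZ}(X)\tensordot L$ (with the coaction of Proposition \ref{prop:sigma_is_a_map_of_K_coalgebras_general_statement}) to $\tilde{\ZZ}(X\tensordot L)$ (with its cofree-type coaction $\id\eta\id$). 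That compatibility is the actual content of the proposition---it is why Proposition \ref{prop:sigma_is_a_map_of_K_coalgebras_general_statement} is stated immediately beforehand---and it rests on the simplicial naturality of the unit $\eta$ (Proposition \ref{prop:unit_and_counit_are_simplicial}) together with the compatibility of the simplicial structure maps of $\tilde{\ZZ}$, $U$, and $\K=\tilde{\ZZ}U$ from Proposition \ref{prop:useful_properties_of_the_adjunction}(e). Your proof never invokes the coalgebra structure on $\tilde{\ZZ}(X)\tensordot L$ at all, so it cannot see this condition.

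The remainder of your argument is fine: $\sigma$ is indeed an isomorphism in $\sAb$ by Proposition \ref{prop:useful_properties_of_the_adjunction}(a), so the objectwise maps $\sigma^*$ are bijections; compatibility with $d^i$ for $i\geq 1$ and with all $s^j$ is the formal post-composition argument you describe; and naturality in $X$ and $Y$ follows as you say. To repair the proof, add the verification that $\sigma$ intertwines the two coactions (a diagram chase using $m=\id\eta\id$ and the simplicial naturality of $\eta$), and then your $d^0$ computation goes through as displayed above.
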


\begin{prop}
\label{prop:cosimplicial_resolutions_of_K_coalgebras_respect_adjunction_isos}
Let $X\in\Space_*$ and $Y\in\coAlgK$. The adjunction isomorphisms associated to the $(\tilde{\ZZ},U)$ adjunction induce well-defined isomorphisms of $\Delta$-shaped diagrams
$
  \Hombold_{\sAb}(\tilde{\ZZ}X,\K^\bullet Y)\Iso
  \Hombold_{\Space_*}(X,U\K^\bullet Y)
$
 in $\sSet$, natural in $X,Y$.
\end{prop}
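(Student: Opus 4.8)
The plan is to deduce this $\sSet$-enriched statement from its underlying $\Set$-level counterpart (Proposition \ref{prop:adjunction_isos_respect_cosimplicial_relations}) by evaluating in each simplicial degree and correcting by the structure map $\sigma$ of Proposition \ref{prop:useful_properties_of_the_adjunction}(a). First I would unwind the definition of the mapping spaces (Definitions \ref{defn:simplicial_structure_pointed_spaces} and \ref{defn:simplicial_structure_sAb}): in simplicial degree $m$ the two cosimplicial objects in question are
\[
  \Hombold_{\sAb}(\tilde{\ZZ}X, \K^\bullet Y)_m = \hom_{\sAb}(\tilde{\ZZ}X \tensordot \Delta[m], \K^\bullet Y),
  \qquad
  \Hombold_{\Space_*}(X, U\K^\bullet Y)_m = \hom_{\Space_*}(X \tensordot \Delta[m], U\K^\bullet Y),
\]
each regarded as a $\Delta$-shaped diagram in $\Set$ via the cosimplicial ($\bullet$) structure, which on both sides is induced entirely by the $\sAb$-morphisms of the cosimplicial resolution $\K^\bullet Y$ in the target slot.

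Next I would build the degree-$m$ isomorphism in two steps. Applying the natural isomorphism $\sigma \colon \tilde{\ZZ}(X) \tensordot \Delta[m] \Iso \tilde{\ZZ}(X \tensordot \Delta[m])$, and invoking Proposition \ref{prop:natural_isomorphism_sigma_respects_cosimplicial_relations} (with $L = \Delta[m]$) to see that precomposition by $\sigma$ respects the $\bullet$-direction, I rewrite the source as $\hom_{\sAb}(\tilde{\ZZ}(X \tensordot \Delta[m]), \K^\bullet Y)$; then Proposition \ref{prop:adjunction_isos_respect_cosimplicial_relations}, applied to the pointed space $X \tensordot \Delta[m]$ in place of $X$, identifies this with $\hom_{\Space_*}(X \tensordot \Delta[m], U\K^\bullet Y)$ compatibly with the cosimplicial maps. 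The composite is thus an isomorphism of $\Delta$-shaped diagrams in $\Set$ for each fixed $m$. Here Proposition \ref{prop:sigma_is_a_map_of_K_coalgebras_general_statement} is what makes the $\sigma$-twist in Proposition \ref{prop:natural_isomorphism_sigma_respects_cosimplicial_relations} meaningful, by equipping $Y \tensordot L$ with its natural $\K$-coalgebra structure.

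Finally I would check that these degreewise isomorphisms are natural in $m$, hence assemble into an isomorphism of $\Delta$-shaped diagrams in $\sSet$; since $\sigma$ is natural in the simplicial-set variable and the underlying adjunction isomorphism is natural, a simplicial operator $[m'] \to [m]$ acts compatibly on both constructions. Naturality in $X$ and $Y$ is then inherited from the corresponding naturality recorded in Propositions \ref{prop:adjunction_isos_respect_cosimplicial_relations} and \ref{prop:natural_isomorphism_sigma_respects_cosimplicial_relations}. The step I expect to carry the real weight is exactly this last bookkeeping: confirming that the simplicial ($\Delta[m]$) structure and the cosimplicial ($\bullet$) structure do not interfere. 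The clean reason they do not is that the $\bullet$-direction coface and codegeneracy maps are built from the comultiplication, coaction, and counit, which are \emph{simplicial} natural transformations by Proposition \ref{prop:unit_and_counit_are_simplicial}, so they commute with the structure maps arising from $\Delta[m]$ and with $\sigma$; this orthogonality is what licenses checking the two compatibilities separately.
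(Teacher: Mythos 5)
Your proposal is correct and follows essentially the same route as the paper: in each simplicial degree the isomorphism is the composite of the $\sigma$-induced identification $\hom_{\sAb}(\tilde{\ZZ}(X)\tensordot\Delta[n],\K^\bullet Y)\Iso\hom_{\sAb}(\tilde{\ZZ}(X\tensordot\Delta[n]),\K^\bullet Y)$ with the adjunction isomorphism applied to $X\tensordot\Delta[n]$, with compatibility in the cosimplicial direction supplied by Propositions \ref{prop:natural_isomorphism_sigma_respects_cosimplicial_relations} and \ref{prop:adjunction_isos_respect_cosimplicial_relations}. Your additional appeal to Proposition \ref{prop:unit_and_counit_are_simplicial} to justify assembling the degreewise isomorphisms into a map of $\sSet$-valued diagrams is a reasonable elaboration of bookkeeping the paper leaves implicit.
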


\begin{proof}
It suffices to verify that the composite
\begin{align*}
  \hom(\tilde{\ZZ}(X)\tensordot\Delta[n],\K^\bullet Y)\Iso
  \hom(\tilde{\ZZ}(X\tensordot\Delta[n]),\K^\bullet Y)\Iso
  \hom(X\tensordot\Delta[n],U\K^\bullet Y)
\end{align*}
is a well-defined map of cosimplicial objects in $\Set$, natural in $X,Y$, for each $n\geq 0$; this follows from Propositions \ref{prop:adjunction_isos_respect_cosimplicial_relations} and \ref{prop:natural_isomorphism_sigma_respects_cosimplicial_relations}.
\end{proof}

\begin{prop}
\label{prop:zigzag_of_weak_equivalences_tot_and_tq_completion}
If $X$ is a pointed space, then there is a zigzag of weak equivalences of the form
$
  X^\wedge_\ZZ\wequiv\holim\nolimits_{\Delta}C(\tilde{\ZZ}X)\wequiv\Tot^\res C(\tilde{\ZZ}X)
$
in $\Space_*$, natural with respect to all such $X$.
\end{prop}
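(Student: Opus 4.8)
The plan is to exhibit both weak equivalences by identifying $C(\tilde{\ZZ}X)$ with the Bousfield-Kan integral homology resolution and then stringing together the comparison results of Section~\ref{sec:homotopy_limit_towers}. First I would unwind the cosimplicial cobar construction (Definition~\ref{defn:cobar_construction}): since $\K=\tilde{\ZZ}U$, there is a natural objectwise isomorphism $C(\tilde{\ZZ}X)^n=U\K^n\tilde{\ZZ}X\Iso(U\tilde{\ZZ})^{n+1}X$, and a check of the coface and codegeneracy maps identifies the cosimplicial pointed space $C(\tilde{\ZZ}X)$ with the Bousfield-Kan cosimplicial resolution of $X$ with respect to integral homology \eqref{eq:homology_resolution_introduction}. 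Under this identification, Bousfield-Kan \cite[I.4]{Bousfield_Kan} exhibit $X^\wedge_\ZZ$ as the totalization of $C(\tilde{\ZZ}X)$, which they establish computes the homotopy limit of this (fibrant) resolution.

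The crucial structural observation I would record next is that $C(\tilde{\ZZ}X)$ is objectwise fibrant in $\Space_*$: each term $U\K^n\tilde{\ZZ}X$ is the underlying pointed simplicial set of a simplicial abelian group and hence a Kan complex. This means no fibrant replacement is needed, so that $\holim\nolimits_\Delta C(\tilde{\ZZ}X)\wequiv\holim\nolimits^\BK_\Delta C(\tilde{\ZZ}X)$ by Definition~\ref{defn:homotopy_limit_derived}, and all the comparison maps below are available. The left-hand equivalence $X^\wedge_\ZZ\wequiv\holim\nolimits_\Delta C(\tilde{\ZZ}X)$ then follows from the identification above together with the fact that $\holim\nolimits^\BK_\Delta$ computes the homotopy-correct homotopy limit on objectwise fibrant $\Delta$-shaped diagrams \cite[XI]{Bousfield_Kan}.

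For the right-hand equivalence I would route through the restricted indexing category $\Delta_\res$, which requires only objectwise fibrancy and so avoids any Reedy fibrancy bookkeeping: the natural zigzag
\begin{align*}
  \Tot^\res C(\tilde{\ZZ}X)
  \xrightarrow{\wequiv}
  \holim\nolimits^\BK_{\Delta_\res} C(\tilde{\ZZ}X)
  \xleftarrow{\wequiv}
  \holim\nolimits^\BK_{\Delta} C(\tilde{\ZZ}X)
  \wequiv
  \holim\nolimits_\Delta C(\tilde{\ZZ}X)
\end{align*}
is built from Proposition~\ref{prop:tot_restricted_compared_with_holim_delta_restricted} (comparing $\Tot^\res$ with $\holim\nolimits^\BK_{\Delta_\res}$) and Proposition~\ref{prop:left_cofinal_delta_restricted_to_delta} (left cofinality of $\Delta_\res\subset\Delta$), both applicable since $C(\tilde{\ZZ}X)$ is objectwise fibrant. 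Naturality in $X$ is then automatic, since the resolution $C(\tilde{\ZZ}X)$, the skeletal and cofinality comparison maps, and the functors $\holim$, $\Tot^\res$, and $(-)^\wedge_\ZZ$ are all natural.

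The main obstacle I anticipate is the first step: making the identification of $C(\tilde{\ZZ}X)$ with the Bousfield-Kan resolution fully precise at the level of cosimplicial structure (matching their coface and codegeneracy conventions, including the role of the counit in the codegeneracies), and confirming that the fibrancy hypotheses implicit in their $\Tot$-description of $X^\wedge_\ZZ$ are met so that their totalization agrees with $\holim\nolimits_\Delta C(\tilde{\ZZ}X)$. Once the two cosimplicial objects are matched and objectwise fibrancy is in hand, everything else is a routine application of the comparison propositions already assembled in Section~\ref{sec:homotopy_limit_towers}.
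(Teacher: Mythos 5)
The paper states this proposition without proof, treating it as a routine consequence of the identifications you describe, so there is no argument of the paper's to compare against; your write-up is a reasonable filling-in of what the authors leave implicit. Your two main steps are right: the isomorphism $C(\tilde{\ZZ}X)^n=U\K^n\tilde{\ZZ}X\Iso(U\tilde{\ZZ})^{n+1}X$ does identify the cobar construction with the Bousfield-Kan triple resolution (the cofaces all insert the unit $\eta$ and the codegeneracies all apply the counit $\varepsilon$, matching \cite[I.4.1]{Bousfield_Kan}), and the zigzag $\Tot^\res\rightarrow\holim^\BK_{\Delta_\res}\leftarrow\holim^\BK_\Delta$ via Propositions \ref{prop:tot_restricted_compared_with_holim_delta_restricted} and \ref{prop:left_cofinal_delta_restricted_to_delta} needs only objectwise fibrancy, which you correctly get from the fact that each term underlies a simplicial abelian group.

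The one step where your stated justification is not quite enough is the left-hand equivalence. Bousfield-Kan define $X^\wedge_\ZZ$ as the \emph{unrestricted} $\Tot$ of the resolution, and comparing that with $\holim^\BK_\Delta$ (Proposition \ref{prop:comparing_holim_with_Tot}) requires Reedy fibrancy, not merely objectwise fibrancy; the parenthetical ``(fibrant)'' in your second sentence is carrying this weight without support. The gap is easily closed: the codegeneracies of $C(\tilde{\ZZ}X)$ are obtained by applying $U$ to maps of simplicial abelian groups (they are induced by $\varepsilon\colon\thinspace\K\rarrow\id$), so each matching map is a surjection of simplicial groups, hence a fibration, and the resolution is Reedy fibrant by the grouplike criterion of \cite[X.4.9]{Bousfield_Kan}. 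Note that the coface maps are \emph{not} homomorphisms (they are Hurewicz maps), so it matters that the criterion only involves codegeneracies. With that observation added, your argument is complete.
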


\begin{defn}
A pointed space $X$ is \emph{$\ZZ$-complete} if the natural coaugmentation $X\wequiv X^\wedge_\ZZ$ is a weak equivalence.
\end{defn}

\begin{prop}
\label{prop:fundamental_adjunction_derived_version}
There are natural zigzags of weak equivalences in $\CGHaus$ of the form
$
  \Map_\coAlgK(\tilde{\ZZ}X,Y)\wequiv
  \Map_{\Space_*}(X,\holim\nolimits_\Delta C(Y))
$ and applying $\pi_0$ gives the natural isomorphism $[\tilde{\ZZ}X,Y]_\K\Iso[X,\holim_\Delta C(Y)]$.
\end{prop}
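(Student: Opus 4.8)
The plan is to build a natural zigzag of weak equivalences in $\sSet$ between the simplicial mapping spaces $\Hombold_\coAlgK(\tilde{\ZZ}X,Y)$ and $\Hombold_{\Space_*}(X,\holim_\Delta C(Y))$, and then to pass to $\CGHaus$ by realization; applying $\pi_0$ at the end yields the claimed isomorphism. Throughout, $Y$ is a cofibrant $\K$-coalgebra (so that $\Map_\coAlgK(\tilde{\ZZ}X,Y)$ is defined), and $\tilde{\ZZ}X$ is cofibrant in $\coAlgK$ since $\tilde{\ZZ}$ is left Quillen and every pointed space is cofibrant. The observation that makes everything fit together is that $U\K^\bullet Y$ is precisely the cosimplicial cobar construction $C(Y)$ (Definition \ref{defn:cobar_construction}), and that $C(Y)$ is objectwise fibrant in $\Space_*$ (each $C(Y)^n=U\K^n Y$ is the underlying simplicial set of a simplicial abelian group, hence a Kan complex).

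First I would unwind the left-hand side. By Definition \ref{defn:mapping_spaces_of_derived_K_coalgebras} we have $\Hombold_\coAlgK(\tilde{\ZZ}X,Y)=\Tot^\res\Hombold_\sAb(\tilde{\ZZ}X,\K^\bullet Y)$, and Proposition \ref{prop:cosimplicial_resolutions_of_K_coalgebras_respect_adjunction_isos} supplies a natural isomorphism of $\Delta$-shaped (hence $\Delta_\res$-shaped) diagrams $\Hombold_\sAb(\tilde{\ZZ}X,\K^\bullet Y)\Iso\Hombold_{\Space_*}(X,U\K^\bullet Y)=\Hombold_{\Space_*}(X,C(Y))$, coming from the $(\tilde{\ZZ},U)$ adjunction. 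Applying $\Tot^\res$ gives a natural isomorphism $\Hombold_\coAlgK(\tilde{\ZZ}X,Y)\Iso\Tot^\res\Hombold_{\Space_*}(X,C(Y))$.

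Next I would connect $\Tot^\res\Hombold_{\Space_*}(X,C(Y))$ to $\Hombold_{\Space_*}(X,\holim_\Delta C(Y))$. Write $W:=\Hombold_{\Space_*}(X,C(Y))$, an objectwise fibrant cosimplicial simplicial set (each $W^n=\Hombold_{\Space_*}(X,C(Y)^n)$ is Kan, since $X$ is cofibrant and $C(Y)^n$ is fibrant). The comparison maps of Propositions \ref{prop:tot_restricted_compared_with_holim_delta_restricted} and \ref{prop:left_cofinal_delta_restricted_to_delta} give a zigzag of weak equivalences $\Tot^\res W\xrightarrow{\wequiv}\holim^\BK_{\Delta_\res}W\xleftarrow{\wequiv}\holim^\BK_\Delta W$. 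The crucial identification is then $\holim^\BK_\Delta W\Iso\Hombold_{\Space_*}(X,\holim^\BK_\Delta C(Y))$: since $\holim^\BK_\Delta$ is built, via $\Tot$ and the cosimplicial replacement, entirely out of limits and cotensors by $\Delta[-]$, and $\Hombold_{\Space_*}(X,-)$ preserves limits (being a right adjoint) and commutes with the cotensor $\hombold(K,-)$ by the three-variable simplicial adjunction isomorphisms of Remark \ref{rem:useful_adjunction_isomorphisms_simplicial_structure}, the functor $\Hombold_{\Space_*}(X,-)$ commutes with $\holim^\BK_\Delta$ up to natural isomorphism. Finally, since $C(Y)$ is objectwise fibrant, Definition \ref{defn:homotopy_limit_derived} gives a natural weak equivalence $\holim^\BK_\Delta C(Y)\wequiv\holim_\Delta C(Y)$ between fibrant objects, which $\Hombold_{\Space_*}(X,-)$ preserves because $X$ is cofibrant. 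Stitching these together produces the desired natural zigzag of weak equivalences $\Hombold_\coAlgK(\tilde{\ZZ}X,Y)\wequiv\Hombold_{\Space_*}(X,\holim_\Delta C(Y))$ in $\sSet$.

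To finish, I would apply the realization functor $|-|$, which preserves weak equivalences since every simplicial set is cofibrant, to the above zigzag; combined with the natural weak equivalence $|\Hombold_\coAlgK(\tilde{\ZZ}X,Y)|\xrightarrow{\wequiv}\Map_\coAlgK(\tilde{\ZZ}X,Y)$ recorded just after Proposition \ref{prop:tot_commutes_with_realization} and the defining equality $|\Hombold_{\Space_*}(X,-)|=\Map_{\Space_*}(X,-)$, this yields the natural zigzag of weak equivalences in $\CGHaus$. Applying $\pi_0$ gives bijections throughout; since $\pi_0\Map_\coAlgK(\tilde{\ZZ}X,Y)=[\tilde{\ZZ}X,Y]_\K$ by definition of the homotopy category of $\K$-coalgebras, and $\pi_0\Map_{\Space_*}(X,\holim_\Delta C(Y))=[X,\holim_\Delta C(Y)]$ because $X$ is cofibrant and $\holim_\Delta C(Y)$ is fibrant, we obtain the stated natural isomorphism. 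The main obstacle is the commutation of $\Hombold_{\Space_*}(X,-)$ with $\holim^\BK_\Delta$: although formal, it requires carefully interchanging the simplicial cotensor with the mapping-space functor via the three-variable adjunction, and one must track the objectwise-fibrancy hypotheses needed to invoke the cofinality and $\Tot^\res$-comparison propositions at each stage.
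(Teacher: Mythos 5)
Your proposal is correct and follows essentially the same route as the paper's proof: the adjunction isomorphism of Proposition \ref{prop:cosimplicial_resolutions_of_K_coalgebras_respect_adjunction_isos} identifies the cosimplicial mapping spaces, $\Hombold_{\Space_*}(X,-)$ is commuted past the (restricted) totalization, the $\Tot^\res$-versus-$\holim_\Delta$ comparison is handled by objectwise fibrancy and the cofinality of $\Delta_\res\subset\Delta$, and realization plus Proposition \ref{prop:tot_commutes_with_realization} finishes. The only cosmetic difference is that you perform the $\Tot^\res$/$\holim$ comparison on the outer cosimplicial mapping space $W$ rather than on $C(Y)$ inside the mapping space, which changes nothing of substance.
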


\begin{proof}
There are natural zigzags of weak equivalences of the form (see \cite[2.20]{Arone_Ching_classification})
\begin{align*}
  \Hombold_{\Space_*}(X,\holim\nolimits_\Delta C(Y))
  &\wequiv\Hombold_{\Space_*}\bigl(X,\Tot^\res C(Y)\bigr)
  \Iso\Tot^\res\Hombold_{\Space_*}\bigl(X,U\K^\bullet Y\bigr)\\
  &\Iso\Tot^\res\Hombold_{\sAb}\bigl(\tilde{\ZZ}X,\K^\bullet Y\bigr)
  \Equal\Hombold_\coAlgK(\tilde{\ZZ}X,Y)
\end{align*}
in $\sSet$; applying realization, together with Proposition \ref{prop:tot_commutes_with_realization} finishes the proof.
\end{proof}

The following amounts to the observation that mapping into fibrant $\ZZ$-complete objects induces the indicated weak equivalence on mapping spaces; compare \cite[5.5]{Hess} and \cite[2.15]{Arone_Ching_classification}. It shows that the integral chains functor in \eqref{eq:derived_adjunction_main} is homotopically fully faithful on $\ZZ$-complete spaces.

\begin{prop}
\label{prop:formal_adjunction_and_iso_argument}
Let $X,X'$ be pointed spaces. If $X'$ is $\ZZ$-complete and fibrant, then there is a natural zigzag
$
  \tilde{\ZZ}\colon\thinspace\Map_{\Space_*}(X,X')\wequiv\Map_\coAlgK(\tilde{\ZZ}X,\tilde{\ZZ}X')
$
of weak equivalences; applying $\pi_0$ gives the map $[f]\mapsto[\tilde{\ZZ}(f)]$.
\end{prop}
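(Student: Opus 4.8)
The plan is to deduce the statement from the general derived adjunction comparison of Proposition~\ref{prop:fundamental_adjunction_derived_version} together with the identification of $\holim_\Delta C(\tilde{\ZZ}X')$ as the Bousfield-Kan $\ZZ$-completion of $X'$ (Proposition~\ref{prop:zigzag_of_weak_equivalences_tot_and_tq_completion}), and then to conclude by a two-out-of-three argument on mapping spaces. First I would specialize Proposition~\ref{prop:fundamental_adjunction_derived_version} to the case $Y=\tilde{\ZZ}X'$, which provides a natural zigzag of weak equivalences
\begin{align*}
  \Map_\coAlgK(\tilde{\ZZ}X,\tilde{\ZZ}X')\wequiv
  \Map_{\Space_*}\bigl(X,\holim\nolimits_\Delta C(\tilde{\ZZ}X')\bigr).
\end{align*}
The target space already looks like the completion of $X'$, so the remaining work is to feed in the $\ZZ$-completeness hypothesis and to check that the resulting equivalence is the one induced by the integral chains functor.

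The key compatibility is that the composite of the natural map $\tilde{\ZZ}$ of Proposition~\ref{prop:induced_map_on_mapping_spaces_built_from_Q} with the equivalence above is, up to the natural zigzags in sight, the map $\Map_{\Space_*}(X,-)$ applied to the coaugmentation $X'\rarrow\holim_\Delta C(\tilde{\ZZ}X')$. I would verify this by tracing the two constructions: the adjunction isomorphism $\Tot^\res\Hombold_{\Space_*}(X,C(\tilde{\ZZ}X'))\Iso\Hombold_\coAlgK(\tilde{\ZZ}X,\tilde{\ZZ}X')$ used in Proposition~\ref{prop:induced_map_on_mapping_spaces_built_from_Q} is precisely the inverse of the one appearing in Proposition~\ref{prop:fundamental_adjunction_derived_version} (both come from Proposition~\ref{prop:cosimplicial_resolutions_of_K_coalgebras_respect_adjunction_isos}), so the two cancel, and what remains is $\Hombold_{\Space_*}(X,-)$ applied to the coaugmentation $X'\rarrow\Tot^\res C(\tilde{\ZZ}X')$ followed by the natural equivalence commuting $\Tot^\res$ with mapping in. By the description of the derived unit in Section~\ref{sec:derived_fundamental_adjunction} together with Proposition~\ref{prop:zigzag_of_weak_equivalences_tot_and_tq_completion}, this coaugmentation is tautologically the $\ZZ$-completion map $X'\rarrow{X'}^\wedge_\ZZ$.

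Now I would invoke the hypotheses: since $X'$ is $\ZZ$-complete the completion map is a weak equivalence, and since $X'$ is fibrant while $\holim_\Delta C(\tilde{\ZZ}X')$ is fibrant (its underlying object is assembled from simplicial abelian groups, hence objectwise fibrant, and the derived homotopy limit of Definition~\ref{defn:homotopy_limit_derived} lands in fibrant objects), and every pointed space $X$ is cofibrant, the functor $\Map_{\Space_*}(X,-)$ carries it to a weak equivalence. Hence the above composite is a weak equivalence; since the right-hand map of Proposition~\ref{prop:fundamental_adjunction_derived_version} is a weak equivalence, the two-out-of-three property forces the natural map $\tilde{\ZZ}$ itself to be a weak equivalence. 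Taking $\pi_0$ then yields the map $[f]\mapsto[\tilde{\ZZ}(f)]$ by construction of $\tilde{\ZZ}$ in Proposition~\ref{prop:induced_map_on_mapping_spaces_built_from_Q}.

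The main obstacle is the compatibility step of the second paragraph---confirming that the natural map $\tilde{\ZZ}$, post-composed with the Proposition~\ref{prop:fundamental_adjunction_derived_version} equivalence, genuinely is $\Map_{\Space_*}(X,-)$ applied to the $\ZZ$-completion map. This is a diagram-chase through the adjunction isomorphisms of Proposition~\ref{prop:cosimplicial_resolutions_of_K_coalgebras_respect_adjunction_isos} and the definition of the derived counit/unit, matching the coaugmentation $X'\rarrow C(\tilde{\ZZ}X')$ used in Proposition~\ref{prop:induced_map_on_mapping_spaces_built_from_Q} with the one underlying the completion. Once that identification is in place, the remaining homotopical bookkeeping (fibrancy of the totalization, cofibrancy of $X$, and homotopy-invariance of $\Map_{\Space_*}(X,-)$ on fibrant targets) is routine, and all naturality claims are inherited from the cited propositions.
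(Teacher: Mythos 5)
Your proposal is correct and follows essentially the same route as the paper: the paper's proof is exactly the zigzag $\Map_{\Space_*}(X,X')\wequiv\Map_{\Space_*}(X,{X'}^\wedge_\ZZ)\wequiv\Map_{\Space_*}(X,\holim_\Delta C(\tilde{\ZZ}X'))\wequiv\Map_\coAlgK(\tilde{\ZZ}X,\tilde{\ZZ}X')$, using $\ZZ$-completeness and Propositions \ref{prop:zigzag_of_weak_equivalences_tot_and_tq_completion} and \ref{prop:fundamental_adjunction_derived_version}. Your additional compatibility check---that the composite zigzag agrees with the map $\tilde{\ZZ}$ of Proposition \ref{prop:induced_map_on_mapping_spaces_built_from_Q}, justifying the $[f]\mapsto[\tilde{\ZZ}(f)]$ claim on $\pi_0$---is a point the paper leaves implicit, and is a worthwhile piece of extra rigor rather than a different argument.
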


\begin{proof}
This follows from the natural zigzags
\begin{align*}
  \Map_{\Space_*}(X,{X'}^\wedge_\ZZ)
  \wequiv\Map_{\Space_*}(X,\holim\nolimits_\Delta C(\tilde{\ZZ}X')\wequiv\Map_\coAlgK(\tilde{\ZZ}X,\tilde{\ZZ}X')
\end{align*}
of weak equivalences.
\end{proof}

\bibliographystyle{plain}
\bibliography{IntegralChains}

\end{document}